\documentclass[11pt,onecolumn]{article}

\usepackage[T1]{fontenc}
\usepackage[utf8]{inputenc}
\usepackage{lmodern}
\usepackage{microtype}
\usepackage[margin=1.8cm,top=2cm,bottom=2.5cm]{geometry}
\usepackage{amsmath,amssymb,amsthm}
\usepackage{mathtools}
\usepackage{graphicx}
\usepackage{caption}
\usepackage{subcaption}
\usepackage{booktabs}
\usepackage{array}
\usepackage[colorlinks=true,citecolor=blue,urlcolor=blue,linkcolor=blue]{hyperref}
\usepackage[numbers,sort&compress]{natbib}
\usepackage{setspace}
\usepackage{parskip}
\usepackage{titlesec}
\usepackage{abstract}
\usepackage{tcolorbox}
\usepackage{comment}

\usepackage[colorlinks=true,citecolor=blue,urlcolor=blue,linkcolor=blue]{hyperref}
\usepackage[numbers,sort&compress]{natbib}
\usepackage{enumerate}

\usepackage[dvipsnames]{xcolor}

\usepackage{tikz}
\usetikzlibrary{arrows.meta,shapes.geometric,positioning,calc}

\definecolor{panelgray}{RGB}{240,242,245}
\definecolor{panelblue}{RGB}{219,234,254}
\definecolor{paneteal}{RGB}{204,251,241}
\definecolor{panelamb}{RGB}{254,243,199}
\definecolor{panelred}{RGB}{254,226,226}
\definecolor{cnavy}{RGB}{27,46,75}
\definecolor{cblue}{RGB}{37,99,235}
\definecolor{cteal}{RGB}{13,148,136}
\definecolor{camb}{RGB}{217,119,6}
\definecolor{cred}{RGB}{220,38,38}
\definecolor{cgray}{RGB}{107,114,128}

\titleformat{\section}{\bfseries\normalsize}{\thesection.}{0.5em}{}
\titleformat{\subsection}{\bfseries\small}{\thesubsection.}{0.5em}{}
\titlespacing{\section}{0pt}{8pt}{4pt}
\titlespacing{\subsection}{0pt}{6pt}{3pt}

\newcommand{\dd}{\mathrm{d}}
\newcommand{\Rbb}{\mathbb{R}}
\newcommand{\calN}{\mathcal{N}}
\newcommand{\calE}{\mathcal{E}}
\newcommand{\calP}{\mathcal{P}}
\newcommand{\tr}{\operatorname{Tr}}
\newcommand{\mfeff}{(\mathrm{eff})}
\newcommand{\ptar}{p^{(\mathrm{tar})}}
\newcommand{\pin}{p^{(\mathrm{in})}}

\newcommand{\sstar}{u^{(*)}}
\newcommand{\MF}{\mathrm{MF}}
\newcommand{\IA}{\mathrm{IA}}
\newcommand{\Veff}{V^{(\mathrm{eff})}}

\newtheorem{theorem}{Theorem}[section]
\newtheorem{proposition}[theorem]{Proposition}

\newtheorem{corollary}[theorem]{Corollary}
\newtheorem{remark}{Remark}[section]

\title{%
  \textbf{Mean-Field Path-Integral Diffusion:\\
  From Samples to Interacting Agents}
}

\author{%
  Michael (Misha) Chertkov\\[2pt]
  \small Graduate Interdisciplinary Program in Applied Mathematics\\
  \small \& Department of Mathematics, University of Arizona, Tucson, AZ, USA\\[2pt]
  \small Correspondence: \href{mailto:chertkov@arizona.edu}{chertkov@arizona.edu}
}

\date{\today}

\begin{document}
\maketitle

\begin{abstract}
Independent sample generation is the prevailing paradigm in modern diffusion-based generative models of AI. We ask a different question: can samples \emph{coordinate} through shared population statistics to transport probability mass more efficiently? We introduce Mean-Field Path-Integral Diffusion (MF-PID), a framework in which samples are promoted to interacting agents whose drift depends self-consistently on the evolving population density. The coupling converts distribution matching into a McKean--Vlasov extension of the stochastic optimal transport problem, unifying generative modeling and multi-agent control under the same Hamilton--Jacobi--Bellman/Kolmogorov--Fokker--Planck duality. We identify two analytically tractable regimes: a Linear--Quadratic--Gaussian (LQG) benchmark in which the infinite-dimensional mean-field system reduces to a finite set of Riccati and linear ODEs, and a Gaussian-mixture regime governed by a piecewise-constant protocol that preserves closed-form solvability. For a quadratic interaction potential with schedule $\beta_t$ and zero base drift we prove that the self-consistent MF guidance is the \emph{exact} linear interpolant between initial and target global means --- a result that holds for arbitrary initial and target densities and any $\beta_t$. Applied to demand-response control of energy systems, where agents aggregated into an ensemble are energy consumers (e.g.\ thermal zones within a building), MF-PID achieves 19--24\% reductions in cumulative control energy over independent-agent baselines while matching the prescribed terminal distribution exactly, and reveals how coordination redistributes actuation effort across heterogeneous sub-populations. The energy saving is independent of the number of zones per building ($d=1$--$32$ tested), confirming that the linear guidance formula broadcasts a single $d$-vector with $\mathcal{O}(d)$ communication and grows mildly in compute (sub-cubically for $d\leq 32$, asymptotically $\mathcal{O}(d^3)$ for $d\gg 1$).
\end{abstract}

\begin{figure*}[t]
\centering
\begin{tikzpicture}[font=\small, x=1mm, y=1mm]

\def\TW{170}
\def\TH{62}

\def\PaL{0}    \def\PaR{35}
\def\PbL{45}   \def\PbR{80}
\def\PcL{90}   \def\PcR{125}
\def\PdL{135}  \def\PdR{170}

\pgfmathsetmacro{\PaMid}{(\PaL+\PaR)/2}    
\pgfmathsetmacro{\PbMid}{(\PbL+\PbR)/2}    
\pgfmathsetmacro{\PcMid}{(\PcL+\PcR)/2}    
\pgfmathsetmacro{\PdMid}{(\PdL+\PdR)/2}    

\pgfmathsetmacro{\ArA}{(\PaR+\PbL)/2}      
\pgfmathsetmacro{\ArB}{(\PbR+\PcL)/2}      
\pgfmathsetmacro{\ArC}{(\PcR+\PdL)/2}      

\fill[cnavy] (0,\TH) rectangle (\TW,\TH+7);
\node[text=white, font=\small\bfseries, anchor=center]
  at (\TW/2, \TH+3.5)
  {Mean-Field Path-Integral Diffusion (MF-PID) $=$ Sample/Agent-Coordinated Optimal Transport};

\fill[panelgray!60] (\PaL,0) rectangle (\PaR,\TH);
\draw[cgray, line width=0.7pt, rounded corners=2pt]
  (\PaL,0) rectangle (\PaR,\TH);

\node[cgray, font=\small\bfseries, anchor=north]
  at (\PaMid, \TH-0.5) {Independent Agents};

\node[cnavy, font=\scriptsize, anchor=center]
  at (\PaMid, \TH-8) {$dx_t=\text{\color{red} score }dt+dW_t$};

\begin{scope}[shift={(7,40)}]
  \fill[cgray!25]
    plot[domain=-5:5,samples=30]
      (\x,{6*exp(-0.5*(\x/2.0)^2)}) -- (5,0) -- (-5,0) -- cycle;
  \draw[cgray, line width=0.8pt]
    plot[domain=-5:5,samples=30](\x,{6*exp(-0.5*(\x/2.0)^2)});
  \node[cgray, font=\footnotesize, anchor=north] at (0,-1) {$p^{(\mathrm{in})}$};
\end{scope}

\foreach \yoff/\op in {0/0.80, 3/0.55, -3/0.55, 5.5/0.35, -5.5/0.35}{
  \draw[cgray, line width=0.5pt, opacity=\op]
    (12, 40+\yoff)
    .. controls (15, 40+\yoff+0.4) and (19, 40+\yoff-0.4) ..
    (23, 40+\yoff);
}

\begin{scope}[shift={(28,40)}]
  \fill[cgray!25]
    plot[domain=-4:4,samples=30]
      (\x,{5*exp(-0.5*((\x+1.5)/1.0)^2)+10*exp(-0.5*((\x-1.8)/0.7)^2)}
      ) -- (4,0) -- (-4,0) -- cycle;
  \draw[cgray, line width=0.8pt]
plot[domain=-4:4,samples=50]
  (\x,{5*exp(-0.5*((\x+1.5)/1.0)^2)+10*exp(-0.5*((\x-1.8)/0.7)^2)});
  \node[cgray, font=\footnotesize, anchor=north] at (0,-1) {$p^{(\mathrm{out})}$};
\end{scope}

\node[cred, font=\footnotesize, anchor=center]
  at (\PaMid, 31) {Path Integral Diffusion };

\node[cnavy, font=\scriptsize, anchor=center]
  at (\PaMid, 26) {score $=\nabla_x \log \psi_t(x)$};

\node[cnavy, font=\scriptsize, anchor=center]
  at (\PaMid, 21) {"Sch\"{o}dinger Bridge"  $+ V_t(x)$};

\node[cgray, font=\scriptsize, anchor=west] at (\PaL+1, 13)
  {$\calE^{\IA(0)}$};
\fill[cred!50, rounded corners=1pt](\PaL+11,9)rectangle(\PaR-2,13);
\node[cgray, font=\scriptsize, anchor=center]
  at (\PaMid, 4.5) {Scen.\,A: 31.3 \quad B: 17.2};

\node[cblue, font=\scriptsize\bfseries, anchor=south]
  at (\ArA, \TH/2+3) {MF};
\draw[cblue, ->, >=Stealth, line width=2.5pt]
  (\ArA-3.5, \TH/2) -- (\ArA+3.5, \TH/2);

\fill[panelblue!50] (\PbL,0) rectangle (\PbR,\TH);
\draw[cblue, line width=1pt, rounded corners=2pt]
  (\PbL,0) rectangle (\PbR,\TH);

\node[cblue, font=\small\bfseries, anchor=north]
  at (\PbMid, \TH-0.5) {Interacting Agents};

\node[cnavy, font=\scriptsize, anchor=center]
  at (\PbMid, \TH-6) {MF-PID = H-PID};

\node[cnavy, font=\scriptsize, anchor=center]
  at (\PbMid, \TH-10) {\color{red} with $\nu_t=\mathbb{E}[x_t]$};

\begin{scope}[shift={(51,40)}]
  \fill[cblue!20]
    plot[domain=-5:5,samples=30]
      (\x,{6*exp(-0.5*(\x/2.0)^2)}) -- (5,0) -- (-5,0) -- cycle;
  \draw[cblue!80, line width=0.8pt]
    plot[domain=-5:5,samples=30](\x,{6*exp(-0.5*(\x/2.0)^2)});
  \node[cblue, font=\footnotesize, anchor=north] at (0,-1) {$p^{(\mathrm{in})}$};
\end{scope}

\foreach \yoff/\op in {0/0.85, 2.5/0.6, -2.5/0.6, 5/0.4, -5/0.4}{
  \draw[cblue, line width=0.55pt, opacity=\op]
    (56, 40+\yoff)
    .. controls (58.5, 40+\yoff*0.55) and (60.5, 40.5) ..
    (63, 40)
    .. controls (65.5, 39.5) and (67.5, 40+\yoff*0.55) ..
    (70, 40+\yoff);
}

\filldraw[fill=white, draw=cblue, line width=1pt]
  (63, 40) ellipse (4 and 3.5);
\node[cblue, font=\footnotesize\itshape] at (63, 41.2) {$\rho_t$};
\node[cblue, font=\scriptsize] at (63, 38.5) {};

\begin{scope}[shift={(73,40)}]
  \fill[cblue!20]
    plot[domain=-4:4,samples=30]
      (\x,{5*exp(-0.5*((\x+1.5)/1.0)^2)+10*exp(-0.5*((\x-1.8)/0.7)^2)}
      ) -- (4,0) -- (-4,0) -- cycle;
  \draw[cblue, line width=0.8pt]
    plot[domain=-4:4,samples=50]
  (\x,{5*exp(-0.5*((\x+1.5)/1.0)^2)+10*exp(-0.5*((\x-1.8)/0.7)^2)});
  \node[cblue, font=\footnotesize, anchor=north] at (0,-1) {$p^{(\mathrm{out})}$};
\end{scope}

\node[cnavy, font=\scriptsize, anchor=center]
  at (\PbMid, 26) {$u^{(*)}_t = \nabla\!\log\psi_t(x,\rho_t)$};

  
\node[cnavy, font=\scriptsize, anchor=center]
  at (\PbMid, 16)
  {$V^{(\mathrm{eff})}_t(x)\!\!=\!\!\int\!V_t(x\!-\! y)\rho_t(y)\mathrm{d}y$};

\node[cblue, font=\scriptsize, anchor=west] at (\PbL+1, 11)
  {$\calE^{\MF}$};
\fill[cblue!70, rounded corners=1pt](\PbL+10,7)rectangle(\PbL+30,11);
\node[cteal, font=\scriptsize\bfseries, anchor=center]
  at (\PbMid, 3.5) {$\downarrow$\,11--23\% energy saved};

\node[cteal, font=\scriptsize\bfseries, anchor=south]
  at (\ArB, \TH/2+3) {Exact 
  };
\draw[cteal, ->, >=Stealth, line width=2.5pt]
  (\ArB-3.5, \TH/2) -- (\ArB+3.5, \TH/2);

\fill[paneteal!60](\PcL,0)rectangle(\PcR,\TH);
\draw[cteal, line width=0.9pt, rounded corners=2pt]
  (\PcL,0)rectangle(\PcR,\TH);

\fill[paneteal!50]
  (\PcL+1.5, \TH/2+2+5) rectangle (\PcR-1.5, \TH-9+5);
\draw[cteal, rounded corners=2pt, line width=0.7pt]
  (\PcL+1.5, \TH/2+2+5) rectangle (\PcR-1.5, \TH-9+5);

\node[cteal, font=\small\bfseries, anchor=north]
  at (\PcMid, \TH-6) {LQG};

\node[cnavy, font=\scriptsize, align=center, anchor=center]
  at (\PcMid, \TH/2+15)
  {Linear $f$, Quadratic $V$\\Gaussian $\ptar$\\ $\longrightarrow$ Riccati ODEs
  };

\fill[panelamb!50](\PcL+1.5, 3)rectangle(\PcR-1.5, \TH/2-2);
\draw[camb, rounded corners=2pt, line width=0.7pt]
  (\PcL+1.5, 3)rectangle(\PcR-1.5, \TH/2-2);
\node[camb, font=\small\bfseries, anchor=north]
  at (\PcMid, \TH/2-3) {Score is Explicit};
\node[cnavy, font=\scriptsize, anchor=north, align=center]
  at (\PcMid, \TH/2-10)
  {$p^{(init)}$ \& $p^{(tar)}$ are GMs\\
  + $\beta_t$ is PWC\\
  \\ $\Rightarrow$ $\psi_t$ is GM (in $x$)\\ \& interval-analytic in $t$};

\node[cred, font=\scriptsize\bfseries, anchor=south]
  at (\ArC, \TH/2+3) {Apply};
\draw[cred, ->, >=Stealth, line width=2.5pt]
  (\ArC-3.5, \TH/2) -- (\ArC+3.5, \TH/2);

\fill[panelred!60](\PdL,0)rectangle(\PdR,\TH);
\draw[cred, line width=1pt, rounded corners=2pt]
  (\PdL,0)rectangle(\PdR,\TH);

\node[cred, font=\small\bfseries, anchor=north]
  at (\PdMid, \TH-0.5) {Demand Response};

\node[cnavy, font=\footnotesize\bfseries, anchor=north]
  at (\PdMid, \TH-7) {TCL fleet};

\node[cnavy, font=\scriptsize\bfseries, anchor=west]
  at (\PdL+2, 49) {$\calE$ -- energy savings };

\node[cgray, font=\scriptsize, anchor=east] at (\PdL+8, 44.5) {A};
\fill[cred!40,  rounded corners=1pt](\PdL+8, 42)rectangle(\PdR-3, 46);
\fill[camb!60,  rounded corners=1pt](\PdL+8, 42)rectangle(\PdL+26, 46);
\fill[cblue!80, rounded corners=1pt](\PdL+8, 42)rectangle(\PdL+19, 46);

\node[cgray, font=\scriptsize, anchor=east] at (\PdL+8, 37) {B};
\fill[cred!40,  rounded corners=1pt](\PdL+8, 34.5)rectangle(\PdR-3, 38.5);
\fill[camb!60,  rounded corners=1pt](\PdL+8, 34.5)rectangle(\PdL+23, 38.5);
\fill[cblue!80, rounded corners=1pt](\PdL+8, 34.5)rectangle(\PdL+14, 38.5);

\fill[cred!40]  (\PdL+2, 24)rectangle(\PdL+5, 26.5);
\node[cgray, font=\scriptsize, anchor=west] at (\PdL+6, 25.2) {$\IA(0)$};
\fill[camb!60]  (\PdL+2, 19.5)rectangle(\PdL+5, 22);
\node[cgray, font=\scriptsize, anchor=west] at (\PdL+6, 20.7) {$\IA(\bar{m})$};
\fill[cblue!80] (\PdL+2, 15)rectangle(\PdL+5, 17.5);
\node[cblue, font=\scriptsize\bfseries, anchor=west] at (\PdL+6, 16.2) {$\MF$};

\node[cnavy, font=\scriptsize, anchor=center]
  at (\PdL+18, 10) {Extends to Other Natural};

\node[cnavy, font=\scriptsize, anchor=center]
  at (\PdL+18, 5) {and Engineered Systems};

\end{tikzpicture}
\caption*{\textbf{Overview of the MF-PID framework.}
  \textbf{(a)~Independent Agents/Samples:} In the baseline, each sample
  evolves under a pre-computed drift; trajectories carry no information
  about one another, yielding no inter-agent coupling.
  \textbf{(b)~Interacting Agents:} MF-PID promotes samples to
  interacting agents whose optimal drift $u^{(*)}_t$ depends
  self-consistently on the evolving population density $\rho_t$ through
  the effective potential $V^{(\mathrm{eff})}_t$.
  \textbf{(c)~LQG:} Two analytically tractable regimes---
  \emph{Linear--Quadratic--Gaussian} (LQG), where the
  infinite-dimensional MF system reduces to Riccati ODEs with
  variance/mean decoupling; and \emph{Gaussian-Mixture} (GM) representation for densities with Piece-Wise-Constant (PWC) in time protocol for $\beta_t$ results in an explicit -- gradient-of-log-of-Gaussian mixture expression for the score function.
  \textbf{(d)~DR:} Application to demand-response control of a
  building TCL fleet; MF coordination achieves 11.6\,\% (Scenario~A)
  and 22.6\,\% (Scenario~B) reductions in cumulative control energy
  $\mathcal{E}$ relative to the unguided IA baseline.}
\label{fig:overview}
\end{figure*}


\section*{Introduction}
\label{sec:intro}

Generative AI has been transformed by diffusion models, which frame sample generation as a stochastic process steered from noise to data \cite{sohl-dickstein_deep_2015,ho_denoising_2020,song_score-based_2021}. A key structural feature of these models --- shared with other generative models, e.g.\ normalizing flows \cite{rezende_variational_2015,papamakarios_normalizing_2021} --- is that samples are generated \emph{independently}: the trajectory of one particle carries no information about any other. Similarly, stochastic optimal transport (SOT) and Schr\"odinger bridge formulations \cite{pavon_free_1991,leonard_survey_2013,chen_optimal_2017} cast distribution matching as an independent-particle path optimization, yielding tractable convolutions of Green functions but discarding inter-particle information; stochastic interpolants \cite{albergo_building_2023} construct flexible transport bridges between arbitrary densities via tunable continuous-time stochastic processes, recovering the Schrödinger bridge as a special limit --- again in an independent-particle framework.

A natural complementary question is whether \emph{coordinated} generation can improve efficiency. In physical and engineered systems, collective behavior routinely outperforms individual action: flocks exploit aerodynamic coupling \cite{brambati_learning_2026}, synchronized HVAC fleets reduce peak demand \cite{callaway_tapping_2009,callaway_achieving_2011,mathieu_arbitraging_2015,beil_frequency_2016}, and robotic swarms leverage formation geometry \cite{borra_optimal_2021,kachar_dynamic_2019}. In each case, coupling through a shared field --- air pressure, a power grid, or relative position --- allows the population to achieve its collective objective at lower individual cost.

We make this intuition precise for generative modeling. Specifically, we extend the Path Integral Diffusion (PID) framework \cite{behjoo_harmonic_2025,chertkov_adaptive_2025,chertkov_generative_2025} --- itself a strict generalization of the Schr\"odinger bridge \cite{pavon_free_1991,leonard_survey_2013, chen_optimal_2017,caluya_reflected_2020,teter_contraction_2023}, recovered as the special case of PID --- to a \emph{Mean-Field} (MF) setting in which an ensemble of agents, each performing a controlled diffusion, interacts through its evolving empirical distribution. In the limit of infinitely many agents this yields a closed McKean--Vlasov stochastic control problem: the optimal drift of each agent depends self-consistently on the population density, introducing a nonlinear coupling absent in Independent-Agent (IA) formulations.

\textbf{Four contributions.}
\emph{First}, we derive the MF-PID equations in both terminal-cost
and SOT (hard marginal constraint) formulations (SI\,\S1), identifying
the precise way in which mean-field coupling breaks global
integrability of the IA case while preserving local analytical
structure.

\emph{Second}, we show that when the base drift is linear, the
interaction potential quadratic, and the target Gaussian, the
infinite-dimensional mean-field system collapses to a finite set of
coupled Riccati and linear ODEs (SI\,\S2).
This \emph{LQG benchmark} is the first closed-form mean-field
generalization of PID, providing explicit energetic comparisons
between MF and IA strategies.
The covariance dynamics are identical in the MF and IA cases; MF
coordination operates entirely through the linear coefficient $s_t$,
giving a clean analytical separation of the energetic benefit.

\emph{Third}, we prove that for a quadratic interaction potential with
zero base drift, the self-consistent MF guidance is the \emph{exact}
linear interpolant between initial and target global means
(Theorem, SI\,\S3.2).
The result is independent of the $\beta$-schedule and the shape of
both distributions.
The proof rests on a structural cancellation in the It\^o--HJB
system: nonlinear score terms cancel exactly in the mean acceleration
equation, leaving a zero-force condition that forces the mean to
evolve linearly.
This converts what appeared to be a nonlinear fixed-point problem into
a one-shot explicit computation.

\emph{Fourth}, we extend analytic tractability to multi-modal
\emph{Gaussian-mixture} targets via a piecewise-constant (PWC)
protocol (SI\,\S3).
With the guidance known analytically, the score function is assembled
in a single pass through closed-form Green-function coefficients
without iteration.
The formulation naturally accommodates non-delta initial distributions,
enabling closed-form transport bridges.

\textbf{Application.}
We demonstrate MF-PID on a physically motivated \emph{Demand Response}
(DR) scenario for large ensembles of multi-zone buildings.
Post-curtailment recovery is cast as a Gaussian-mixture SOT bridge
with $K$ sub-population types and $d$ zones per building.
MF coordination reduces cumulative control energy by 19--24\%
relative to independent-agent baselines while exactly matching the
terminal temperature distribution.
Three scalability properties are verified numerically:
(\emph{i})~the per-zone energy saving is invariant across $d=1$--$32$
zones, confirming the theorem's dimension-independence;
(\emph{ii})~the saving grows consistently (19\%--22\%) as fleet
heterogeneity increases from $K=2$ to $K=8$ sub-types;
(\emph{iii})~adding AR(1) inter-zone thermal coupling ($\rho=0$--$0.8$)
leaves the saving unchanged at $\approx21\%$.
The guidance is a single linearly-interpolated $d$-vector,
broadcast once to the fleet with no iteration required.

\textbf{Relation to existing work.} 

The value of inter-sample communication is not new to filtering. Ensemble Kalman filters (EnKF) \cite{evensen_data_2009} famously replace an intractable Gaussian update with a finite-ensemble approximation in which every particle is corrected by the empirical covariance of the full ensemble — a linear, observation-driven coupling that breaks particle independence at each assimilation step. Our setting differs in three respects: (i) there are no sequential observations; the coupling arises instead from a terminal-cost objective and an explicit interaction potential; (ii) the interaction is nonlinear and self-consistent, governed by a McKean–Vlasov equation rather than a Kalman gain; and (iii) optimality is measured by control energy rather than posterior approximation error. MF-PID can therefore be read as the continuous-time, generative-transport analogue of the EnKF idea: structured inter-sample communication in the service of a well-defined variational objective.

Mean-field Schr\"odinger bridges have been formulated and their ergodic and propagation-of-chaos properties analyzed \cite{backhoff-veraguas_mean_2019,hernandez_propagation_2024}. Our work extends this line by placing mean-field coupling within the Path-Integral Diffusion (PID) framework introduced in \cite{behjoo_harmonic_2025}, which strictly generalizes the classical Schr\"odinger bridge. The independent-agent bridge is recovered as the special case $V_t=f_t=A_t=0$ of PID. Harmonic PID (H-PID) further allows affine base drift and quadratic potentials while preserving Gaussian Green functions \cite{behjoo_harmonic_2025} (see also related analysis in \cite{teter_weyl_2024}), and Guided PID introduces a time-dependent quadratic steering potential, $V_t(x) = \beta_t(x - \eta_t)^2/2$, that shapes trajectories without coupling particles \cite{chertkov_generative_2025}. In all of these constructions, agents remain independent. MF-PID replaces the externally prescribed potential by a self-consistent effective interaction,
\[
V_t^{(\mathrm{eff})}(x)=\frac{1}{2}\int \beta_t (x-y)^2\,p_t(y)\,\mathrm{d}y,
\]
thereby promoting samples to interacting agents governed by a McKean--Vlasov stochastic control problem. Unlike existing mean-field Schr\"odinger formulations \cite{backhoff-veraguas_mean_2019,hernandez_propagation_2024}, our setting incorporates both a nontrivial base drift and an explicit interaction potential. After the Cole--Hopf transform, the system reduces to quasi-linear HJB--KFP equations coupled through the evolving density, yielding a class of mean-field entropic stochastic optimal transport problems not previously analyzed.

While linear--quadratic mean-field control and games are classical \cite{huang_large-population_2007,bensoussan_mean_2013}, they have not been synthesized with entropic SOT or diffusion-based generative modeling. 

Stochastic interpolants \cite{albergo_building_2023} provide a broad independent-particle framework for flows and diffusions -- MF-PID add structure (via PID construct) and then lifts this paradigm to the interacting regime. Finally, the piecewise-constant analytic machinery developed in \cite{chertkov_adaptive_2025,chertkov_generative_2025} is extended here to accommodate mean-field coupling without sacrificing closed-form tractability.

\section*{Results}
\label{sec:results}

\subsection*{The MF-PID Framework}

\textbf{Setup.}
Consider $N$ exchangeable agents, each evolving under the controlled
It\^o diffusion
\begin{equation}
  \dd x_t^{(i)}
  = \bigl(f_t(x_t^{(i)}) + u_t^{(i)}\bigr)\dd t + \dd W_t^{(i)},
  \quad x_0^{(i)} = 0,
  \label{eq:sde-agent}
\end{equation}
where $f_t$ is a base drift, $u_t^{(i)}$ is the control (score),
and $W_t^{(i)}$ are independent standard Brownian motions.
As $N\to\infty$, the marginal density $p_t$ is governed by the MF
Kolmogorov--Fokker--Planck (KFP) equation, and each agent obeys the
McKean--Vlasov SDE
\begin{equation}
  \dd x_t = \bigl(f_t(x_t) + u_t(x_t, p_t)\bigr)\dd t + \dd W_t,
  \quad x_0 = 0.
  \label{eq:mckean}
\end{equation}

\textbf{Optimal control.}
We minimize the mean-field cost
\begin{equation}
  J_t(x) = \inf_u \,\mathbb{E}\!\int_t^1\!\!
    \Bigl(\tfrac{1}{2}\|u_{t'}\|^2
    + \!\int\! V_{t'}(x_{t'}{-}y)\,p_{t'}(y)\,\dd y
    \Bigr)\dd t',
  \label{eq:cost}
\end{equation}
where $V_t(\cdot)$ is an interaction potential penalizing relative
displacement.
Under the Hopf--Cole substitution $J_t = -\log\psi_t$, the HJB
equation for $\psi$ becomes linear, and the optimal control is
$u_t^* = \nabla_x\!\log\psi_t$.
In the SOT formulation, a hard constraint $p_1 = \ptar$ replaces
the terminal penalty, and the optimal control is
\begin{equation}
  \sstar_t(x)
  = \nabla_x \log \int\!\! \ptar(y)
      \frac{G_t^{(-;\mfeff)}(x;y)}{G_1^{(+;\mfeff)}(y;0)}\,\dd y,
  \label{eq:u-star-SOT}
\end{equation}
where $G_t^{(\pm;\mfeff)}$ are Green functions of the HJB/KFP system
evaluated under the effective potential
$\Veff_t(x) = \int V_t(x{-}y)\,p_t(y)\,\dd y$.

\textbf{The MF coupling.}
In the IA case the Green functions can be computed independently of
$p_t$; in the MF case, $G_t^{(\pm;\mfeff)}$ depend on $p_t$ through
$\Veff_t$, and $p_t$ in turn depends on the Green functions.
This self-referential structure is the source of both the difficulty
and the power of MF-PID.
Full derivations are given in SI\,\S1.

\subsection*{LQG Benchmark: Analytic Closed-Form Solution}

\textbf{Model.}
We specialize to a linear base drift $f_t(x) = A_t x + B_t$,
quadratic interaction $V_t(x) = \tfrac{1}{2}x^\top Q_t x$
($Q_t \succeq 0$), and Gaussian target
$\ptar = \calN(m_1^{(\mathrm{tar})},\Sigma_1^{(\mathrm{tar})})$.
Gaussianity is preserved under the controlled dynamics, so
$p_t = \calN(m_t,\Sigma_t)$ for all $t \in [0,1]$.
The effective potential is
$\Veff_t(x) = \tfrac{1}{2}(x{-}m_t)^\top Q_t(x{-}m_t)
+ \tfrac{1}{2}\tr(Q_t\Sigma_t)$
and a quadratic ansatz for $J_t$ yields the closed system
\begin{align}
  -\dot S_t &= Q_t + S_t A_t + A_t^\top S_t - S_t^2,
  \label{eq:riccati-S}\\
  -\dot s_t &= -Q_t m_t + A_t^\top s_t - S_t s_t + S_t B_t,
  \label{eq:riccati-s}\\
  \dot m_t &= (A_t - S_t)m_t + B_t - s_t,
  \label{eq:mean-ode}\\
  \dot\Sigma_t &= (A_t-S_t)\Sigma_t + \Sigma_t(A_t-S_t)^\top + I,
  \label{eq:cov-ode}
\end{align}
with boundary conditions $m_0=0$, $\Sigma_0=0$,
$m_1=m_1^{(\mathrm{tar})}$, $\Sigma_1=\Sigma_1^{(\mathrm{tar})}$.
The optimal MF control is affine: $u_t^*(x) = -S_t x - s_t$.

A key structural observation is that
\eqref{eq:riccati-S}--\eqref{eq:cov-ode} decouple into two
sequential sub-problems: (i) a \emph{variance block}
$(S_t,\Sigma_t)$ that depends only on $Q_t$ and $A_t$ and is
\emph{identical} in the MF and IA cases; and (ii) a \emph{mean block}
$(m_t, s_t)$ whose source term $-Q_t m_t$ couples to the current
population mean rather than a fixed exogenous centre.
MF coordination therefore operates entirely through the linear
coefficient $s_t$ --- a clean analytical separation of the energetic
benefit.

\textbf{Scalar TCL example.}
We illustrate with $d=1$, $f(x) = -\kappa x$
(Ornstein--Uhlenbeck relaxation), and $V(x) = \tfrac{q}{2}x^2$.
This models a population of Thermostatically Controlled Loads (TCLs)
\cite{callaway_tapping_2009,callaway_achieving_2011,hao_aggregate_2015,grammatico_mean_2015,metivier_mean-field_2020,valenzuela_statistical_2023},
where $x_t$ is a normalized temperature deviation.
Let $\Delta = \sqrt{\kappa^2+q}$.
A closed-form shooting procedure (SI\,\S2) determines the unique
$\rho \in (-1,1)$ such that the bridge constraint
$\Sigma_1 = (\sigma^{(\mathrm{tar})})^2$ is satisfied:
\begin{equation}
  \rho = \frac{A-1}{Ar_0-1},
  \quad
  A = \frac{2\Delta(\sigma^{(\mathrm{tar})})^2}{1-r_0},
  \quad
  r_0 = e^{-2\Delta},
  \label{eq:rho}
\end{equation}
after which $S_t$, $\Sigma_t$, and the mean
$m_t = m^{(\mathrm{tar})}\sinh(\kappa t)/\sinh(\kappa)$
are all explicit.

We compare the MF bridge to the IA family parameterised by an
exogenous centre $\bar m \in [0, m^{(\mathrm{tar})}]$.
Since $S_t$ and $\Sigma_t$ are identical across schemes, the
comparison reduces to the linear coefficient $s_t$.
Three performance metrics are used: the mean trajectory $m_t$,
the instantaneous control power
$\calP(t) = S_t^2\sigma_t^2 + (S_t m_t + s_t)^2$,
and the cumulative energy $\calE(t) = \int_0^t \calP(u)\dd u$.

\begin{figure}[h!]
  \centering
  \includegraphics[width=0.48\linewidth]{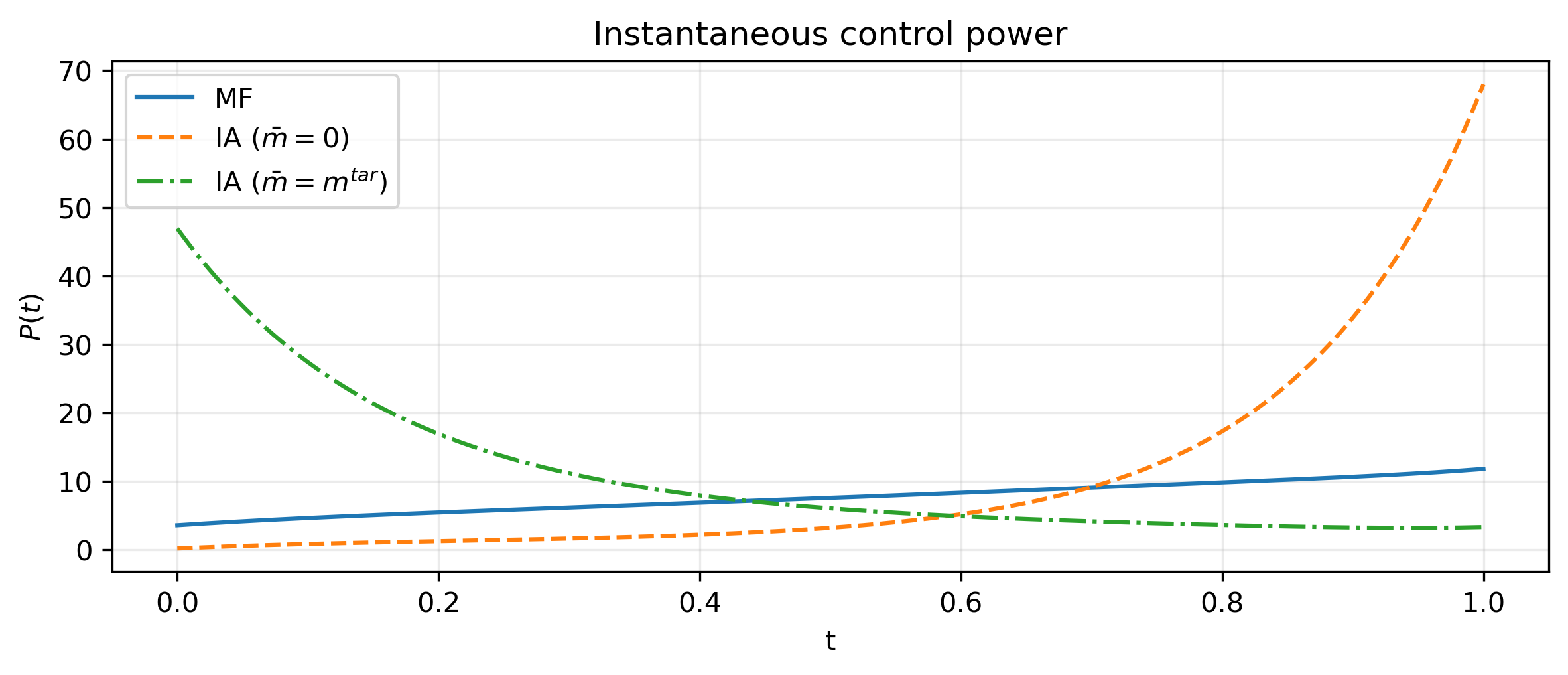}
  \includegraphics[width=0.48\linewidth]{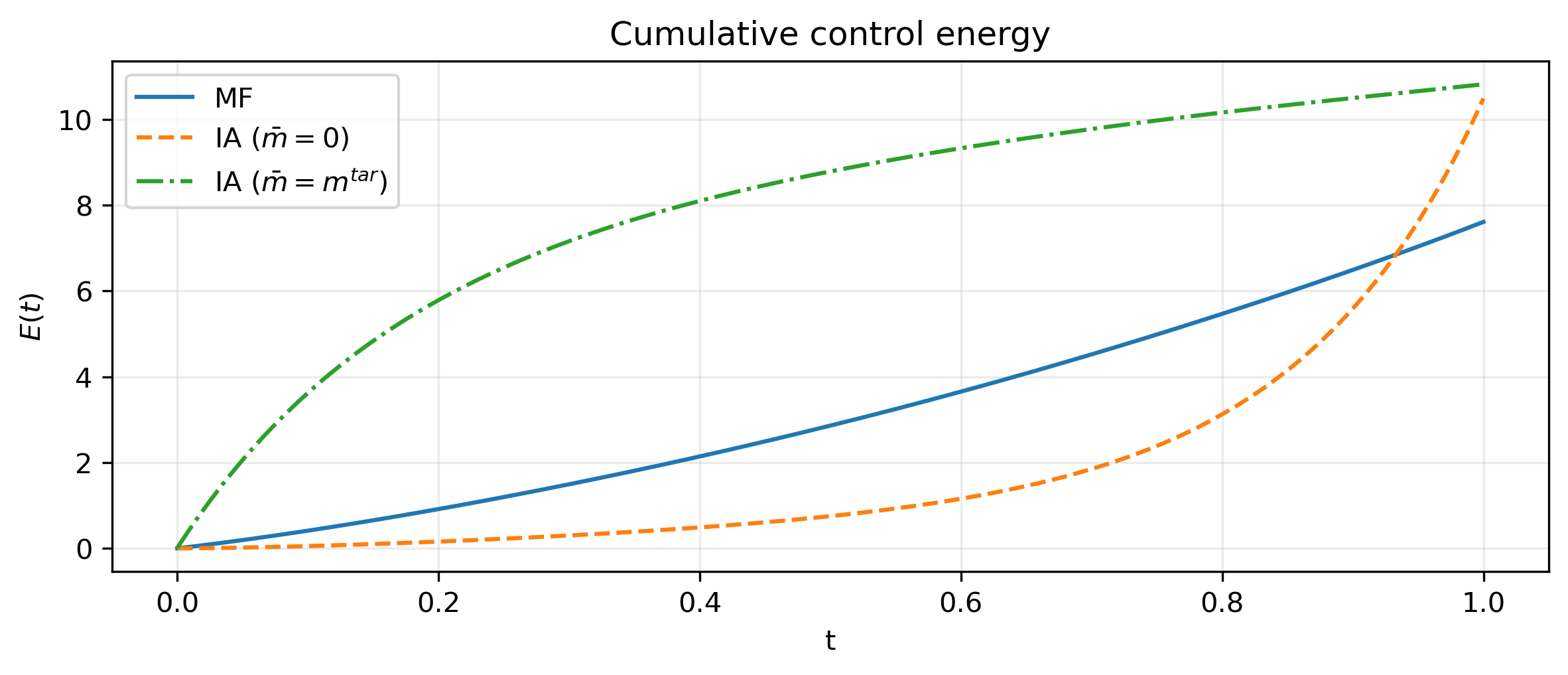}
  \includegraphics[width=0.48\linewidth]{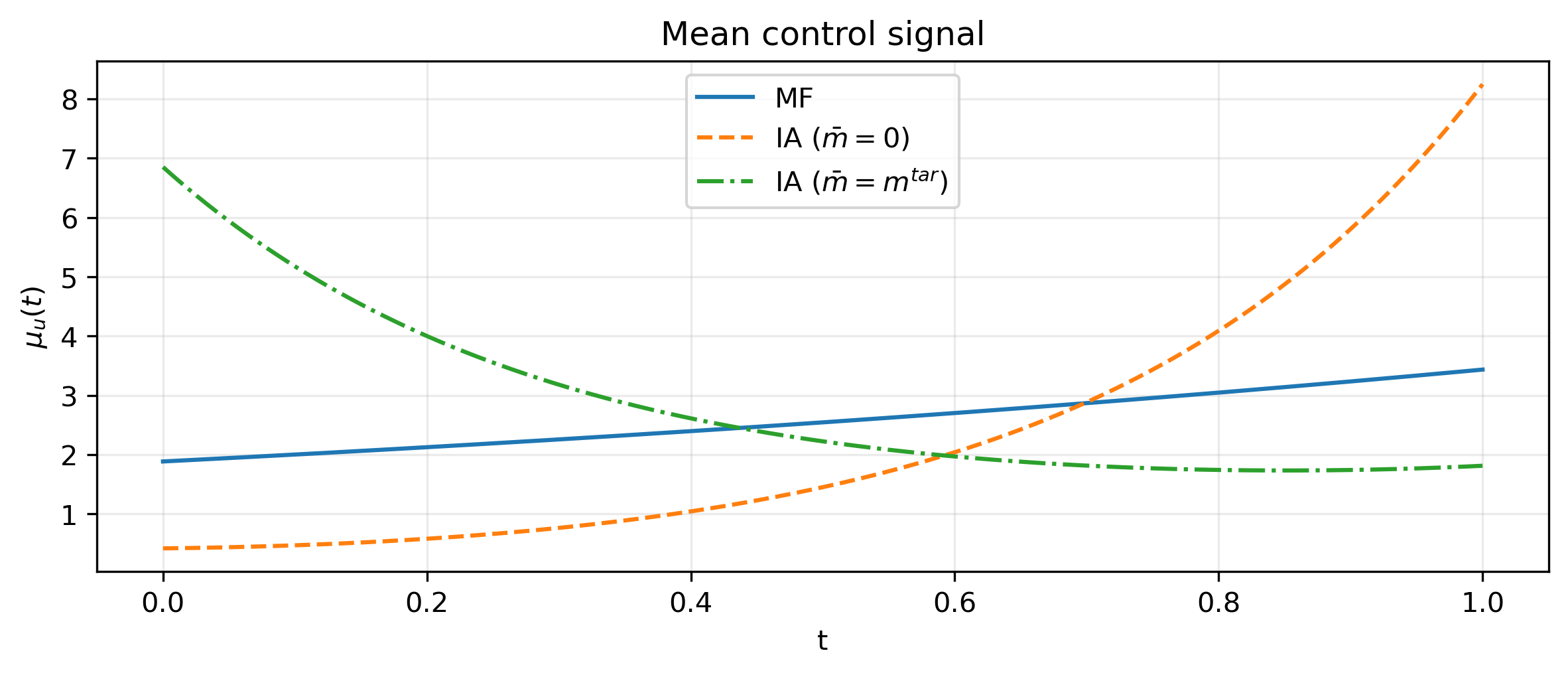}
  \includegraphics[width=0.46\linewidth]{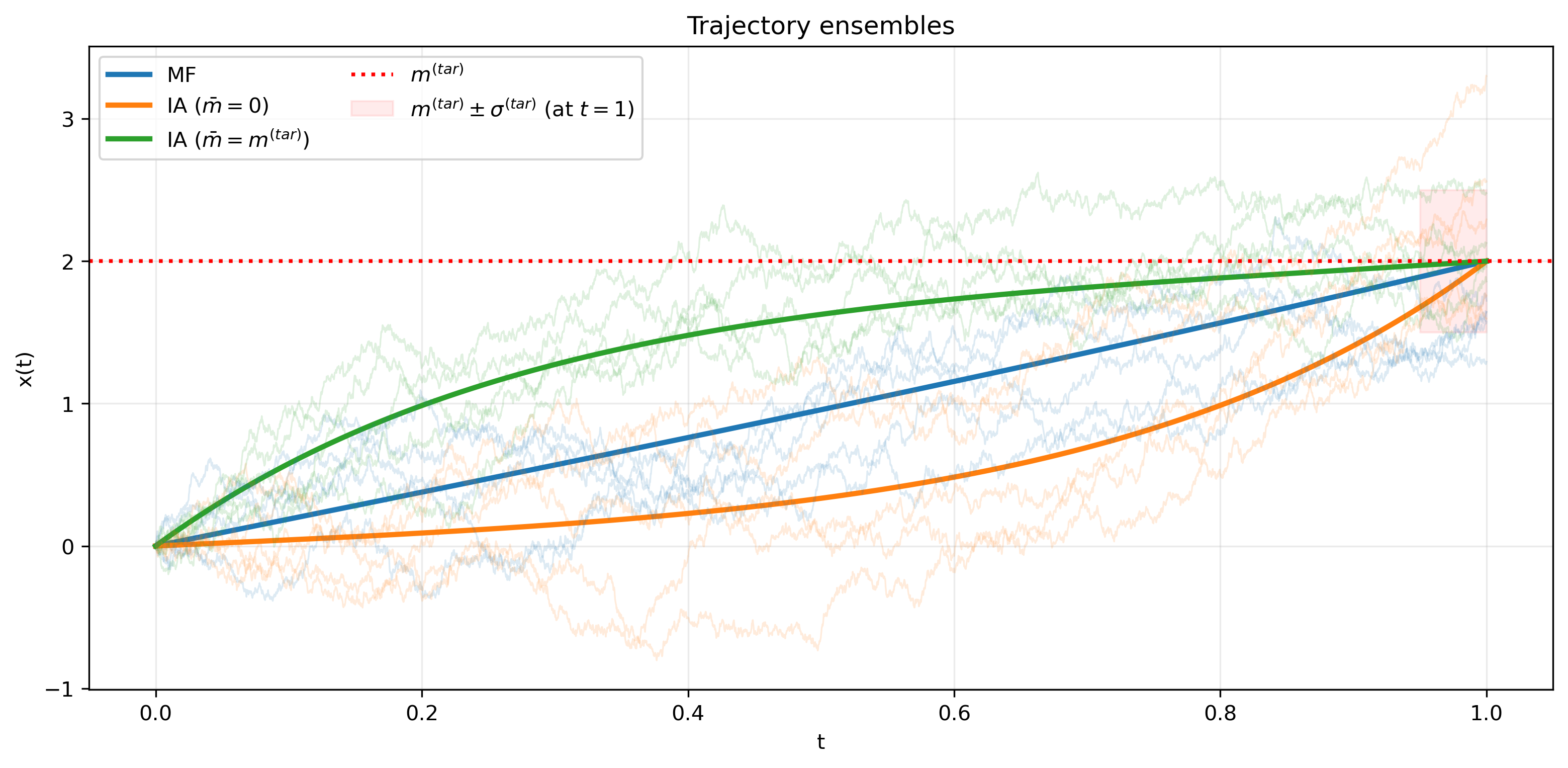}
  \caption{\textbf{MF vs.\ IA controls in the scalar TCL example.}
    \emph{Top left:} Trajectory ensembles for MF (blue),
    IA with $\bar{m}=0$ (orange), and IA with $\bar{m}=m^{(\mathrm{tar})}$
    (green); thin curves: sample paths, thick: analytic means,
    shaded band: target variance.
    \emph{Top right:} Instantaneous control power $\mathcal{P}(t)$.
    \emph{Bottom left:} Cumulative control energy
    $\mathcal{E}(t)=\int_0^t\mathcal{P}(u)\,\mathrm{d}u$.
    \emph{Bottom right:} Mean trajectories $m_t$.
    All curves are obtained analytically.}
  \label{fig:Gauss-to-Gauss}
\end{figure}

Fig.~\ref{fig:Gauss-to-Gauss} shows that the MF mean follows a smooth
hyperbolic sine arc, while IA means exhibit stronger curvature driven
by $\Delta$.
The MF cumulative energy curve lies \emph{strictly below} both IA
curves throughout $[0,1]$: MF coordination achieves the same terminal
accuracy with reduced total control effort.
The advantage originates entirely from $s_t$, which in the MF case
adapts to the endogenous population mean rather than a fixed exogenous
reference.

\subsection*{Exact Linear MF Guidance}

For the isotropic quadratic potential
$V_t(x) = \tfrac{\beta_t}{2}\|x\|^2$ with $f_t \doteq  0$, the
effective potential reduces to
$\Veff_t(x) = \tfrac{\beta_t}{2}\|x - \nu_t\|^2 + \text{const}$,
where $\nu_t = m_t = \mathbb{E}_{p_t^*}[x]$.
MF-PID therefore becomes a \emph{guided H-PID} whose guidance is
the solution of the self-consistency condition
\begin{equation}
  \nu_t^{(\MF)}
  = \mathbb{E}_{x\sim p_t^{*}(\,\cdot\,|\,\nu^{(\MF)})}[x],
  \quad t\in[0,1].
  \label{eq:mf-consistency}
\end{equation}

\begin{tcolorbox}[colback=gray!7,colframe=gray!45,
  title={\small\bfseries \color{black} Theorem (SI\,Thm.\,3.1)}]
\small
Let $f_t \doteq  0$, $V_t(x)=\tfrac{\beta_t}{2}\|x\|^2$ for any
$\beta_t>0$, and let $\pin$, $\ptar$ be probability measures on
$\Rbb^d$ with finite first moments
$\bar m^{(\mathrm{in})}=\mathbb{E}_{\pin}[x]$,
$\bar m^{(\mathrm{tar})}=\mathbb{E}_{\ptar}[x]$.
Then the self-consistent MF guidance satisfies
\begin{equation}
  \nu_t^{(\MF)} = m_t
  = (1-t)\,\bar m^{(\mathrm{in})} + t\,\bar m^{(\mathrm{tar})}
  \quad\forall\,t\in[0,1],
  \label{eq:nu-linear}
\end{equation}
\emph{exactly}, independently of $\beta_t$, the number and geometry
of mixture components, and the shape of $\pin$ and $\ptar$.
\end{tcolorbox}

\textbf{Proof sketch.}
Applying It\^o's formula to $u_t^*(x_t)$ and differentiating
$\dot m_t = \mathbb{E}[u_t^*(x_t)]$ gives
$\ddot m_t
= \mathbb{E}[\partial_t u_t^* + (u_t^*\cdot\nabla)u_t^*
+ \tfrac{1}{2}\Delta u_t^*]$.
Taking the spatial gradient of the HJB equation yields
$\partial_t u_t^*
= \nabla_x\Veff_t - (u_t^*\cdot\nabla)u_t^* - \tfrac{1}{2}\Delta u_t^*$,
so the nonlinear score terms cancel \emph{exactly} inside
$\ddot m_t$, leaving
$\ddot m_t = \mathbb{E}[\nabla_x\Veff_t(x_t)]
= \beta_t(m_t - \nu_t^{(\MF)})$.
The self-consistency condition $\nu_t^{(\MF)}=m_t$ then forces
$\ddot m_t = 0$, and the linear arc
\eqref{eq:nu-linear} follows from the boundary conditions
$m_0=\bar m^{(\mathrm{in})}$, $m_1=\bar m^{(\mathrm{tar})}$.
Full proof in SI\,\S3.2.

\textbf{Corollaries.}
When $\pin = \delta(\cdot)$, equation~\eqref{eq:nu-linear} reduces to
$\nu_t^{(\MF)} = t\,\bar m^{(\mathrm{tar})}$ (SI\,Cor.\,3.2).
For $f_t(x) = -\kappa x$ the same It\^o--HJB cancellation yields
$\ddot m_t = \kappa^2 m_t$, recovering the $\sinh$-arc
$m_t = m^{(\mathrm{tar})}\sinh(\kappa t)/\sinh(\kappa)$
of the LQG benchmark (SI\,Cor.\,3.3).

\begin{tcolorbox}[colback=blue!4,colframe=blue!45,
  title={\small\bfseries  Perspective (alternative viewpoint)}]
\emph{Self-consistent guidance.}
Although we derive the linear guidance by formulating MF-PID and reducing it to guided H-PID, the logic can be reversed.
Start from \emph{guided} PID with prescribed $\nu_t$ and potential
$V_t(x)=\frac{\beta_t}{2}\|x-\nu_t\|^2$.
A natural ``closure'' is to choose $\nu_t$ so that it matches the mean it induces:
\[
\nu_t=\mathbb{E}_{p_t^{(\nu)}}[x],\qquad t\in[0,1],
\]
where $p_t^{(\nu)}$ is the time-marginal under guidance $\nu$.
This turns guidance design into a fixed-point problem in trajectory space.
Our main theorem shows that for $f_t\doteq 0$ and quadratic interaction this fixed point is explicit:
\[
\nu_t=(1-t)\,\bar m^{(\mathrm{in})}+t\,\bar m^{(\mathrm{tar})},
\]
independent of $\beta_t$ and of the shapes of the endpoint distributions.
In this view, MF-PID provides the variational/self-consistency principle that \emph{selects} $\nu_t$,
while guided PID supplies the operational generative mechanism.
\end{tcolorbox}

\textbf{Practical implication.} Eq.~\eqref{eq:nu-linear} provides the MF guidance \emph{in closed form, without any iteration}: one computes the two global means $\bar m^{(\mathrm{in})}$ and $\bar m^{(\mathrm{tar})}$, sets $\nu_t^{(\MF)} = (1-t)\bar m^{(\mathrm{in})} + t\bar m^{(\mathrm{tar})}$, and proceeds directly to score function evaluation via the closed-form Green-function coefficients (SI\,\S3.3--3.4). The self-consistency fixed-point iteration is therefore unnecessary for $f_t\doteq  0$ and is retained in SI only for reference and for the $f_t\not\doteq  0$ setting.

\subsection*{Gaussian-Mixture Score and Demand Response}

\textbf{Explicit score assembly.} For a Gaussian-mixture target $\ptar = \sum_{k=1}^K \pi_k\calN(x;\,m_k,\Sigma_k)$ with guidance \eqref{eq:nu-linear} set analytically, the PWC Green-function coefficients within each interval $[t_i, t_{i+1})$ are determined by the Riccati ODEs of SI\,\S3.3. With the guidance pre-computed, all $K$ sets of coefficients are evaluated in a single forward pass (no outer iteration). The score function then takes the closed form
\begin{equation}
  \sstar_t(x)
  = b_t^{(-)}\bigl(\hat y(t;x) - \Upsilon_t(x)\bigr),
  \label{eq:score}
\end{equation}
where $\hat y(t;x)$ is the mixture-weighted posterior target-component mean and $\Upsilon_t(x)$ an affine function of $x$; all quantities are explicit hyperbolic functions of the PWC coefficients (SI\,\S3.3--3.4, eqs.~S3.4--S3.9). No neural network or iterative solve is required.

\textbf{Demand-response setting.} We consider a fleet of buildings, each with $d$ thermal zones, partitioned into occupied ($\pi_\mathrm{occ}=0.60$, $m^{(\mathrm{tar})}_\mathrm{occ}=0.0$, $\sigma^{(\mathrm{tar})}_\mathrm{occ}=0.20$) and unoccupied ($\pi_\mathrm{unocc}=0.40$, $m^{(\mathrm{tar})}_\mathrm{unocc}=1.5$, $\sigma^{(\mathrm{tar})}_\mathrm{unocc}=0.30$) sub-populations (non-dimensional units; $0\,\widehat{=}\,20\,^\circ\mathrm{C}$, unit~$\widehat{=}\,3\,^\circ\mathrm{C}$). A curtailment event displaces temperatures from setpoints; the \emph{recovery} phase is cast as a GM-to-GM H-PID bridge. By Theorem~\eqref{eq:nu-linear} the guidance $\nu_t^{(\MF)}=(1{-}t)\bar m^{(\mathrm{in})}+t\bar m^{(\mathrm{tar})}$ is exact, requiring no iteration. We compare three strategies --- IA($\nu{=}0$), IA($\nu{=}\bar{m}$), and MF~\eqref{eq:nu-linear} --- under two post-curtailment scenarios (Fig.~\ref{fig:DR-trajectories}; full diagnostics in SI\,\S4):
\begin{itemize}
  \item \textbf{Scenario~A (wide):} modes overlap heavily ($\sigma^{(\mathrm{in})}=3.0$), aggressive curtailment.
  
  \item \textbf{Scenario~B (narrow):} modes well-separated ($\sigma^{(\mathrm{in})}=(0.5,0.7)$), mild curtailment.
\end{itemize}

\begin{figure}[h!]
  \centering
  \includegraphics[width=\linewidth]{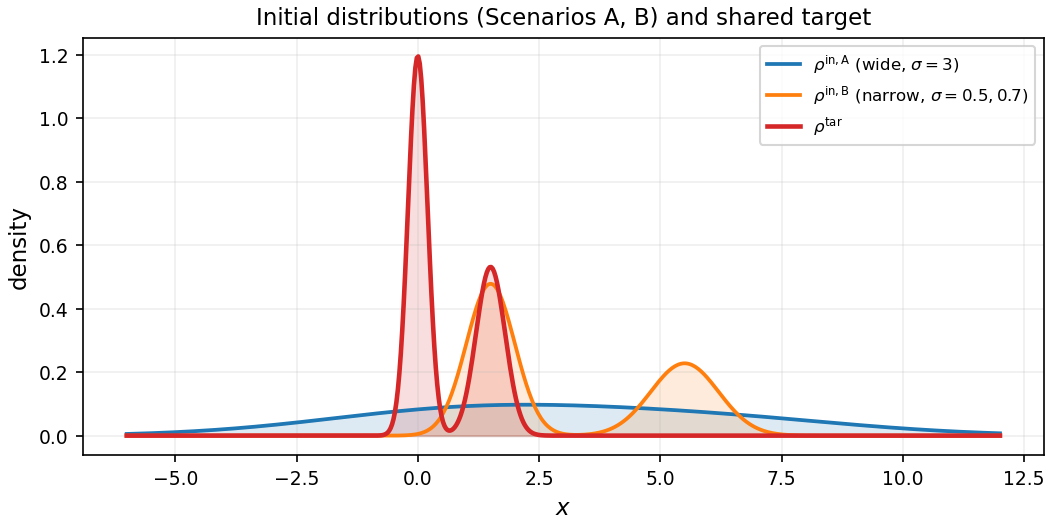}
  \includegraphics[width=\linewidth]{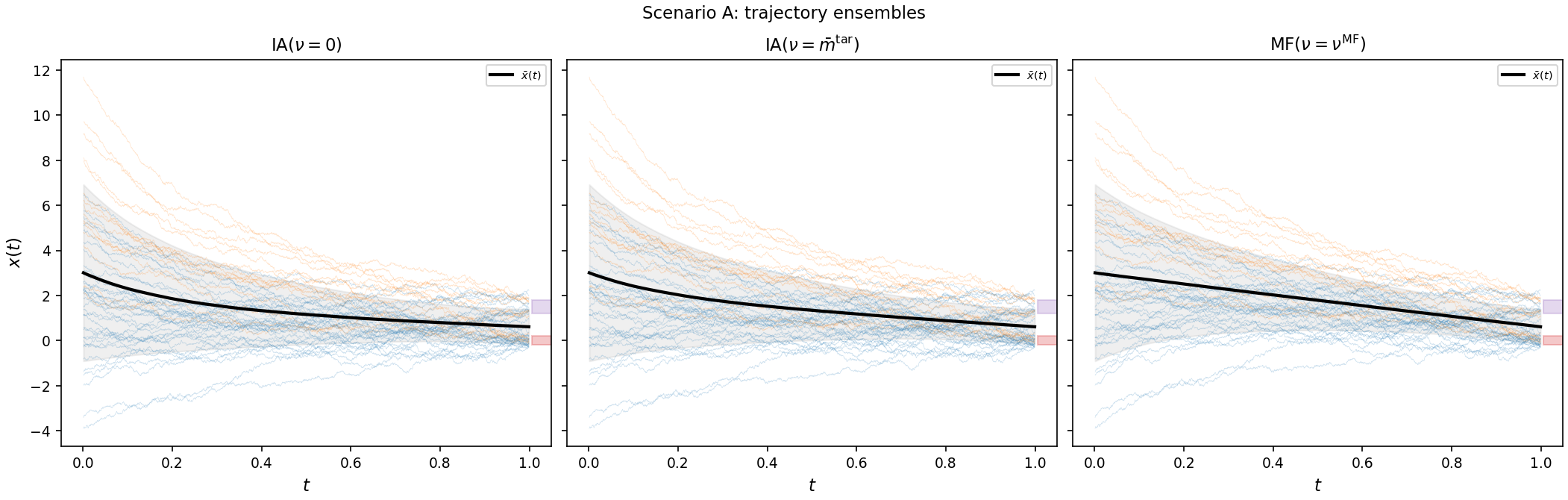}
  \includegraphics[width=\linewidth]{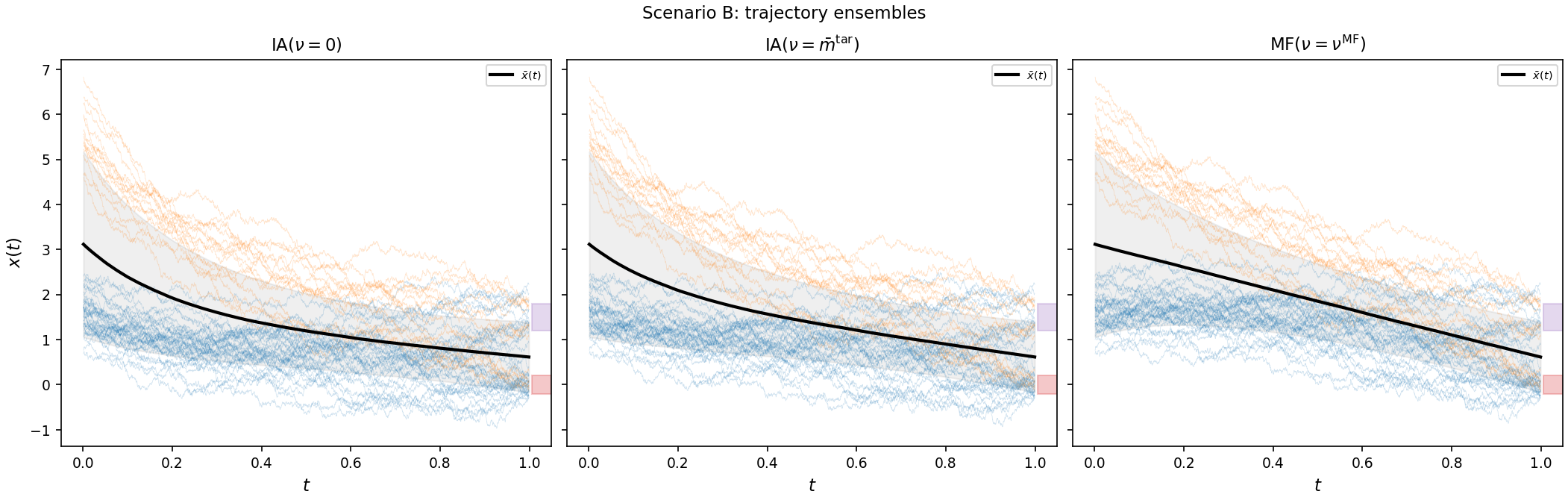}
  \caption{\textbf{Top:} Initial and target distributions.
    \textbf{Middle/Bottom:} Sample trajectories (50 paths) for Scenarios~A (middle) and B (bottom). Blue: occupied; orange: unoccupied. Black: ensemble mean; gray band: $\pm1\sigma$; shaded rectangles: target $\pm\sigma_k^{(\mathrm{tar})}$ bands.}
  \label{fig:DR-trajectories}
\end{figure}

\textbf{Energy results.} The ordering $\calE^{\MF}<\calE^{\IA(\bar m)}<\calE^{\IA(0)}$ holds in both scenarios. For Scenario~A: $\calE^{\MF}=27.67$, $\calE^{\IA(\bar m)}=29.68$, $\calE^{\IA(0)}=31.30$ (\textbf{11.6\%} saving). For Scenario~B: $\calE^{\MF}=13.27$, $\calE^{\IA(\bar m)}=15.47$, $\calE^{\IA(0)}=17.15$ (\textbf{22.6\%} saving). Table~\ref{tab:energy} decomposes these totals by sub-population: MF slightly increases cost for the easy (occupied) mode while substantially reducing it for the expensive (unoccupied) one --- in Scenario~B the unoccupied energy drops from 37.40 to 28.07 ($-$25\%), invisible at the level of the global mean.

\begin{table}[h!]
\centering
\caption{\textbf{Per-mode control energy.} MF guidance \eqref{eq:nu-linear} redistributes effort from the occupied to the unoccupied sub-population, yielding net savings.}
\label{tab:energy}
\small
\setlength{\tabcolsep}{4pt}
\begin{tabular}{l ccc ccc}
\toprule
& \multicolumn{3}{c}{\textbf{Scenario A}}
& \multicolumn{3}{c}{\textbf{Scenario B}} \\
\cmidrule(lr){2-4}\cmidrule(lr){5-7}
Method & $\calE_\mathrm{occ}$ & $\calE_\mathrm{unocc}$ & Total
       & $\calE_\mathrm{occ}$ & $\calE_\mathrm{unocc}$ & Total \\
\midrule
$\nu=0$         & 13.89 & 56.92 & 31.30 &  3.38 & 37.40 & 17.15 \\
$\nu=\bar m$    & 13.43 & 53.59 & 29.68 &  2.63 & 34.36 & 15.47 \\
$\nu=\nu^{\MF}$ & 14.72 & 46.73 & \textbf{27.67}
                &  3.21 & 28.07 & \textbf{13.27} \\
\bottomrule
\end{tabular}
\end{table}

\textbf{Mechanism.} The MF advantage amplifies from Scenario~A to B because fleet heterogeneity increases. In Scenario~A the two modes overlap heavily, the ensemble behaves nearly as a single Gaussian, and a fixed constant guidance already captures most of the benefit. In Scenario~B the modes are quasi-independent: the unoccupied cluster travels ${\approx}4$ units while the occupied cluster moves only ${\approx}1.5$. The self-consistent guidance~\eqref{eq:nu-linear} adapts to this asymmetry, whereas any constant $\nu$ cannot. The energy savings arise from the \emph{timing} of guidance imposed by the $\beta_t$ schedule on the exact linear trajectory, not from a nonlinear displacement of $\nu_t$ itself. Mean-trajectory, power-spectrum, and guidance residual diagnostics for both scenarios are provided in SI\,\S4.

\subsection*{Multi-Zone Scalability}

We now lift the $d=1$ restriction and test the method across three axes: zone count $d$, fleet heterogeneity $K$, and inter-zone thermal coupling. In all cases the MF guidance remains \eqref{eq:nu-linear}: a single linearly-interpolated $d$-vector requiring no iteration.

\textbf{Dimension sweep ($d=1$--$32$, $K=2$).} Each particle represents a building whose state $x\in\mathbb{R}^d$ encodes temperature deviations of $d$ zones (perimeter and interior, distinguished by a sinusoidal zone-type vector). We use Scenario-B parameters broadcast isotropically to $d$~dimensions. Results are summarized in Table~\ref{tab:d_sweep}.

\begin{table}[h!]
\centering
\caption{\textbf{Dimension sweep} ($K=2$, Scenario~B parameters). Per-zone energy $\calE(1)/d$ is invariant across $d=1$--$32$, confirming Theorem~\eqref{eq:nu-linear}'s dimension-independence. Wall-clock time (single CPU, $B=4{,}000$ particles, $n_\mathrm{steps}=2{,}500$) scales sub-cubically in this range; the asymptotic $\mathcal{O}(d^3)$ Cholesky cost dominates beyond $d\gg 32$.}
\label{tab:d_sweep}
\small
\setlength{\tabcolsep}{5pt}
\begin{tabular}{r ccc c r}
\toprule
$d$ & $\calE/d$ (MF) & $\calE/d$ (IA$_0$) & $\calE/d$ (IA$_m$) & Saving & Time (s) \\
\midrule
 1 & 12.26 & 16.17 & 14.34 & 24.2\% &  88 \\
 2 & 13.07 & 16.96 & 15.13 & 23.0\% &  93 \\
 4 & 13.37 & 17.22 & 15.41 & 22.4\% & 105 \\
 8 & 13.57 & 17.40 & 15.59 & 22.0\% & 106 \\
16 & 13.50 & 17.32 & 15.51 & 22.1\% & 152 \\
32 & 13.57 & 17.19 & 15.42 & 21.1\% & 353 \\
\bottomrule
\end{tabular}
\end{table}

The key finding is that $\calE(1)/d \approx 13.4\pm0.2$ (MF) and $17.2\pm0.3$ (IA$_0$) across the entire range: \emph{the per-zone energy saving of $\approx22\%$ is independent of $d$}. Each additional zone costs the same to coordinate as the first. This directly validates Theorem~\eqref{eq:nu-linear}: the guidance adds $\mathcal{O}(d)$ compute (a single mean vector) regardless of how many zones share the building.

The wall-clock time grows from 88\,s ($d=1$) to 353\,s ($d=32$) --- a 4$\times$ increase for a 32$\times$ increase in dimension. For $d\leq 8$ the overhead from the Python time-stepping loop dominates the $d\times d$ Cholesky factorizations; the compute cost scales roughly as $\mathcal{O}(d^{1.3})$ over $d=1$--$32$ and is expected to transition to the asymptotic $\mathcal{O}(d^3)$ regime for $d\gg 32$. For real-time building control ($d\leq 50$, dispatch intervals $\geq\!1$\,min), this places MF-PID comfortably within operational time budgets.

\textbf{Fleet heterogeneity sweep ($K=2$--$8$, $d=4$).} We fix $d=4$ and vary the number of building sub-types $K$ from 2 to 8. Component means are spaced uniformly and initial distributions are displaced by +4 units (aggressive curtailment), with weights decreasing geometrically so that \emph{efficient-to-move} buildings are more numerous. The MF saving grows consistently: 19.3\% ($K=2$), 21.0\% ($K=3$), 21.6\% ($K=4$), 22.4\% ($K=8$). The trend is real but moderate because this design keeps the \emph{per-component displacement} constant ($|\Delta m_k|=4$ for all $k$): only the weight asymmetry increases with $K$. The more dramatic contrast (11.6\% vs 22.6\%) seen in the $d=1$ Scenarios~A/B arises from \emph{geometric} mode heterogeneity (different travel distances), which is the dominant driver of MF advantage.

A structural observation from the $K$-sweep: by construction the target global mean $\bar m^{(\mathrm{tar})}=0$ for all $K$, so the two IA baselines ($\nu\doteq 0$ and $\nu\doteq\bar m^{(\mathrm{tar})}$) coincide exactly. This confirms a general property: for any two constant-guidance IA strategies that share the same time-invariant reference, the costs are identical regardless of $K$ or $d$. The MF guidance, by contrast, uses a \emph{time-varying} interpolant and achieves strictly lower cost in all cases.

\textbf{Inter-zone coupling ($\rho = 0$--$0.8$, $d=8$, $K=2$).} Real buildings have spatially correlated zone temperatures through shared walls and HVAC ducts. We model this with a \emph{spatial} AR(1) covariance:
\begin{equation}
  \Sigma^{(k)}_{ij} = \sigma_k^2\,\rho^{|i-j|},
  \quad i,j = 0,\ldots,d{-}1,
  \label{eq:ar1-cov}
\end{equation}
where $|i{-}j|$ is the integer distance between zone indices and $\rho\in[0,1)$ controls how rapidly correlation decays with spatial separation. ``AR(1)'' refers to the first-order autoregressive structure in the \emph{spatial} index: each zone's temperature is most similar to its immediate neighbours, with influence decreasing geometrically with distance. Here $\rho=0$ recovers independent zones, $\rho=0.5$ gives moderate coupling (adjacent-zone correlation 0.5), and $\rho=0.8$ represents strongly coupled perimeter-to-interior heat transfer (correlation 0.64 across two zones, 0.51 across three). (Note -- as a remark towards future work -- that the spatial AR(1) is distinct from accounting for a \emph{spatio-temporal} correlation, which would additionally link the same zone across time steps. Within our framework, temporal dependence between successive states is naturally introduced by a nonzero base drift $f_t(x)$ --- for example, the OU relaxation $f_t(x)=-\kappa x$ of the LQG section couples the current state to its past through the mean-reversion dynamics. Theorem~\eqref{eq:nu-linear} applies with spatial AR(1) covariances unchanged (it requires only finite first moments, not diagonal covariances).)
Results: the MF saving is $22.0\%$ at $\rho=0$, $21.8\%$ at
$\rho=0.5$, and $21.0\%$ at $\rho=0.8$ --- \emph{insensitive
to zone coupling}.
Increasing $\rho$ raises absolute energies (coupled zones
collectively require more coordinated actuation) but the
relative MF advantage is preserved, confirming the method
needs no retuning for realistic building envelopes.

\begin{figure}[h!]
  \centering
  \includegraphics[width=\linewidth]{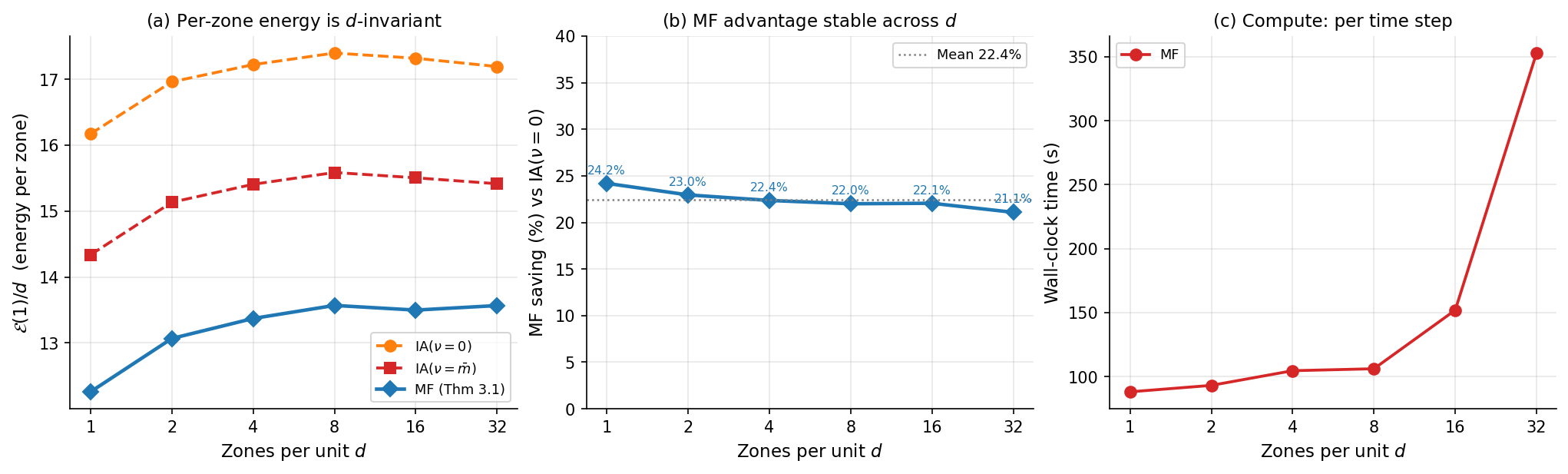}
  \caption{\textbf{Multi-zone scalability ($K=2$).}
    \emph{(a)} Per-zone cumulative energy $\calE(1)/d$ vs.\ number
    of zones $d$.  MF (blue diamonds) is flat at $\approx13.5$;
    IA($\nu{=}0$) (orange) at $\approx17.2$.
    \emph{(b)} MF energy saving (\%) vs.\ $d$: stable at 21--24\%
    across the full range.
    \emph{(c)} Wall-clock time on a single CPU core.
    The dashed line shows the asymptotic $\mathcal{O}(d^3)$ trend
    (Cholesky of $d\times d$ covariance matrices per time step);
    for $d\leq 32$ overhead dominates and scaling is milder.}
  \label{fig:d_sweep}
\end{figure}

\begin{figure}[h!]
  \centering
  \includegraphics[width=0.55\linewidth]{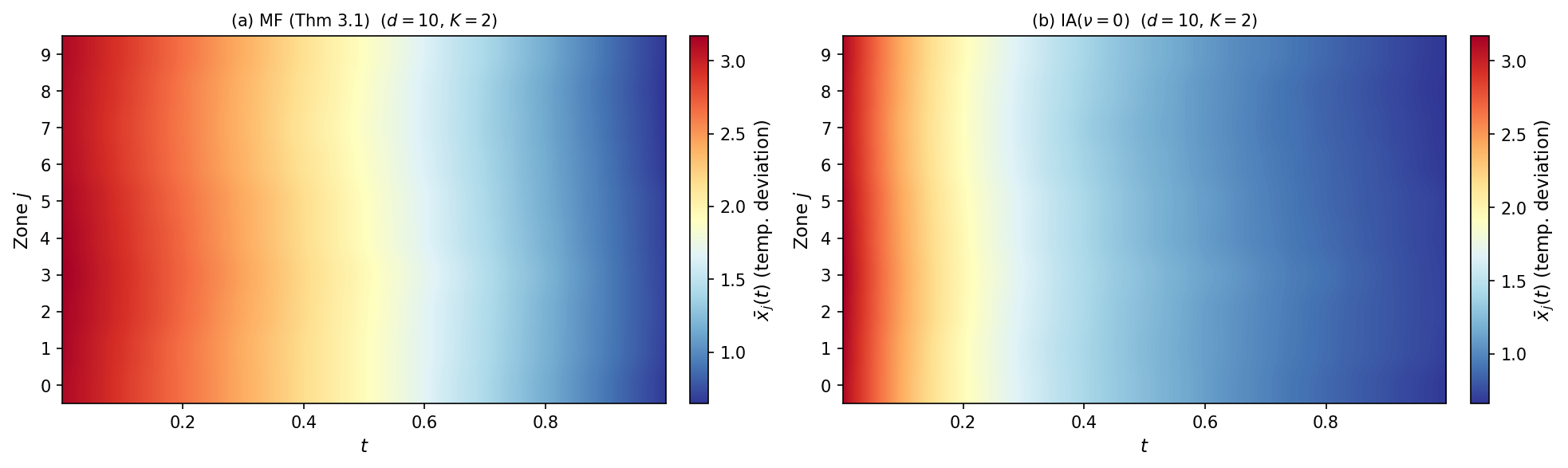}
  \hfill
  \includegraphics[width=0.42\linewidth]{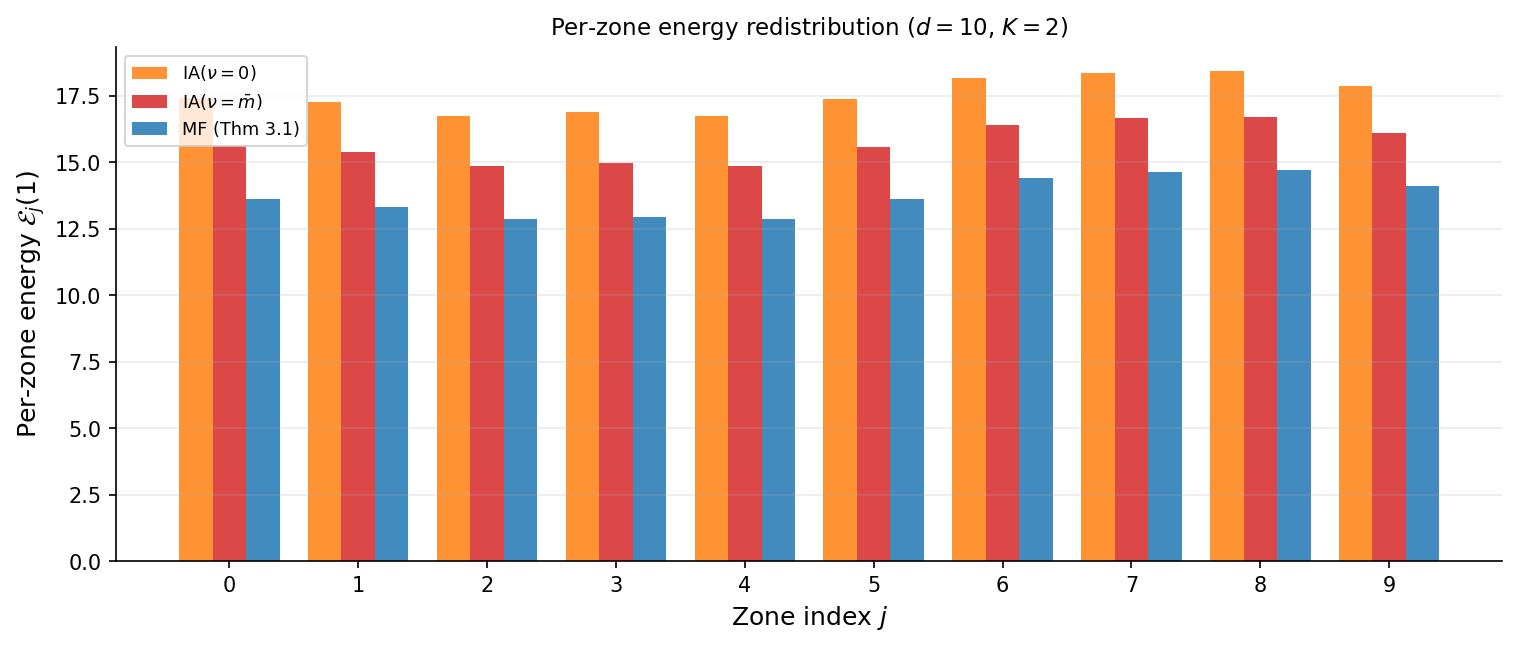}
  \caption{\textbf{Spatial mechanism at $d=10$, $K=2$.}
    \emph{Left:} Zone$\times$time heatmap of ensemble mean temperature
    $\bar x_j(t)$ for MF (top) and IA($\nu{=}0$) (bottom).
    Under MF each zone mean follows a near-linear arc; under IA
    high-displacement zones (top/bottom rows) show stronger curvature.
    \emph{Right:} Per-zone cumulative energy.
    MF redistributes effort: high-displacement zones (e.g.\ zone 5)
    see the largest reduction ($\approx14\%$) while low-displacement
    zones change negligibly.
    Total MF saving: 22.3\%.}
  \label{fig:zone_heatmap}
\end{figure}

\section*{Discussion}
\label{sec:disc}

\textbf{Samples as agents.} MF-PID reveals a precise mathematical duality between generative modeling and coordinated stochastic control: what appears in generative AI as ``sampling from a target distribution'' is equivalent, in the MF limit, to ``redistributing a population under minimal actuation.'' The governing equations --- HJB, KFP, and their Green-function kernels --- do not distinguish between the two interpretations; only the context does. 

The duality between samples and interacting agents also has a distinguished antecedent in data assimilation: ensemble Kalman filters \cite{evensen_data_2009} achieve tractable Bayesian updates by having each particle absorb information from the empirical ensemble covariance. MF-PID extends this philosophy beyond the linear-Gaussian, observation-driven setting: the coupling is encoded in a potential, the interaction is self-consistent across continuous time, and the figure of merit is control energy rather than filtering accuracy.

\textbf{Energetic value of coordination.} The LQG analysis provides a clean analytical proof that MF coordination is energetically superior to exogenously centred IA strategies. The advantage originates in the endogenous population feedback embedded in $s_t$: the MF controller adapts its reference to the current ensemble mean rather than a pre-fixed value. In the Gaussian-mixture setting, the self-consistent field allocates actuation effort proportionally to per-mode transport difficulty.

\textbf{Exact linearity and its implications.} The theorem \eqref{eq:nu-linear} is the central new analytical result of this paper. What appeared empirically as ``near-linearity'' of the guidance is in fact \emph{exact} linearity --- a consequence of a structural cancellation in the It\^o--HJB system that holds for any score function, any distribution, and any $\beta$-schedule. This has three direct practical payoffs. First, the guidance is known analytically before any simulation is run, eliminating online iteration and reducing the communication overhead to a single broadcast of a linearly-interpolated $d$-vector --- ideal for real-time dispatch of large building fleets. Second, the exact linear guidance~\eqref{eq:nu-linear} provides a provably correct initializer for any hybrid neural--analytic extension of MF-PID, removing the need for warm-start iteration in high dimensions. Third, for the OU base drift $f_t(x)=-\kappa x$, the same It\^o--HJB argument yields the $\sinh$-arc in closed form (Corollary, SI\,\S3.2), so the LQG benchmark mean-dynamics result is a special case of a broader structural principle.

\textbf{Open-loop vs.\ closed-loop inference.} The MF-PID construction is \emph{distributionally closed-loop at design time} but \emph{open-loop at inference time}: once $\nu_t^{(\MF)}$ is fixed by~\eqref{eq:nu-linear}, individual samples evolve independently under a pre-determined drift. A natural extension is to replace $\nu_t^{(\MF)}$ in the drift with the empirical ensemble mean $\hat m_t^{(N)} = N^{-1}\sum_{i=1}^N x_t^{(i)}$ during simulation, similarly to the DR implementation in \cite{metivier_mean-field_2020}. Since Theorem \eqref{eq:nu-linear} guarantees that the theoretical guidance equals the population mean, this closed-loop implementation introduces no additional design cost: agents broadcast their current mean and apply the linear schedule. In continuous time this corresponds to a McKean--Vlasov SDE; stability is suggested by the contraction properties established for Schr\"odinger bridge iterations in linear settings \cite{teter_contraction_2023}.

\textbf{Neuralization and high dimensions.} The present construction is deliberately free of neural networks: the analytic GM-to-GM setting serves as a mathematically controlled backbone. For high-dimensional targets beyond the Gaussian-mixture regime, the PWC inner-solve structure provides a natural scaffold for hybridization: learn score corrections beyond quadratic structure using neural networks while retaining analytic Green-function evolution within each PWC interval. Crucially, the exact linear guidance \eqref{eq:nu-linear} provides a provably correct initialization for any such learning procedure. Neural SDE frameworks \cite{tzen_theoretical_2019} provide the theoretical underpinning.

\textbf{Physical time and engineering implications.} Unlike diffusion models operating in synthetic noise time, MF-PID operates in real physical time: recovery horizons, maneuvering windows, or reliability intervals. Control energy, peak actuation, and transient risk are first-class design quantities. The DR application demonstrates three properties of direct relevance to energy-system practitioners. \emph{Dimension invariance}: the per-zone saving of $\approx22\%$ is flat across $d=1$--$32$ zones per building. A dispatcher can scale from a scalar TCL model to a multi-zone office floor without retuning, and the guidance computation adds only $\mathcal{O}(d)$ work (one matrix-vector product per dispatch interval). \emph{Fleet heterogeneity}: the saving grows consistently as the number of distinct building sub-types $K$ (number of components in Gaussian mixture) increases, because MF coordination naturally exploits the disparity in per-type transport distances. \emph{Zone coupling}: AR(1) inter-zone thermal correlation ($\rho=0$--$0.8$) leaves the saving unchanged, validating the approach for realistic building envelopes without any algorithmic modification. Taken together, these findings show that the $\mathcal{O}(d)$ guidance broadcast is not merely a theoretical convenience but a practical enabler of real-time fleet control at building-relevant scales ($d\leq 50$, dispatch intervals $\geq\!1$\,min).

\textbf{Broader outlook.} Three physics-deep research directions emerge. First, nonlinear MF coupling may induce \emph{collective phenomena}: symmetry breaking, mode locking, or phase-transition-like dynamics in multi-modal targets, well-studied in statistical physics but unexplored in generative modeling. Second, MF-PID aligns naturally with the emerging framework of ``sampling decisions'' \cite{chertkov_sampling_2025}, which combines diffusion, transformer, and reinforcement learning under a unified stochastic control umbrella (see also last chapter of \cite{chertkov_mathematics_2025}). Third, the identification of samples with agents positions MF-PID as a bridge between passive data synthesis and active coordination of engineered systems under real-time constraints.

\section*{Methods}
\label{sec:methods}

\textbf{Theoretical framework.} Full derivations of the MF-PID equations in terminal-cost and SOT formulations, the LQG closed-form reduction, and the proof of Theorem~\eqref{eq:nu-linear} are provided in SI\,\S1--\S3. The Hopf--Cole linearisation, Green-function machinery, and piecewise-constant (PWC) analytic formulas follow \cite{behjoo_harmonic_2025,chertkov_adaptive_2025,chertkov_generative_2025}.

\textbf{Simulations.} Scalar DR scenarios (Scenarios~A and~B): $B=8{,}000$ particles, $n_\mathrm{steps}=2{,}500$ Euler--Maruyama steps on $[0,1]$. Multi-zone scalability experiments ($d$-sweep, $K$-sweep, AR coupling): $B=4{,}000$--$6{,}000$ particles, $n_\mathrm{steps}=2{,}500$. In all cases the MF guidance is set analytically via \eqref{eq:nu-linear}. The $\beta$-schedule is geometrically decreasing: $\beta_j = \beta_0\gamma^{j-1}$ with $\beta_0=12.0$, $\gamma=0.65$, $M=8$ intervals.

\textbf{Demand-response parameters.} \emph{Scalar ($d=1$) baseline.} Target mixture: $\pi=(0.60,0.40)$, $m^{(\mathrm{tar})}=(0.0,1.5)$, $\sigma^{(\mathrm{tar})}=(0.20,0.30)$. Scenario~A: $m^{(\mathrm{in})}=(1.0,6.0)$, $\sigma^{(\mathrm{in})}=(3.0,3.0)$. Scenario~B: $m^{(\mathrm{in})}=(1.5,5.5)$, $\sigma^{(\mathrm{in})}=(0.5,0.7)$. Units: 0 $\widehat{=}$ 20\textdegree C, one unit $\widehat{=}$ 3\textdegree C deviation. Global means: $\bar m^{(\mathrm{in})}_A = 2.8$, $\bar m^{(\mathrm{in})}_B = 2.3$, $\bar m^{(\mathrm{tar})} = 0.6$; MF guidance $\nu_t^{(\MF)} = 2.8 - 2.2t$ (A) and $2.3 - 1.7t$ (B).

\emph{$d$-sweep.} Zone heterogeneity encoded via $z_j = \sin(2\pi j/d)$; target mode means $m^{(\mathrm{tar})}_0 = 0.1\cdot\mathbf{1} + 0.15z$, $m^{(\mathrm{tar})}_1 = 1.5\cdot\mathbf{1} - 0.15z$; initial means displaced by $+1.5$ (occupied) and $+4.0$ (unoccupied); diagonal covariances. $d \in \{1,2,4,8,16,32\}$; $B=4{,}000$.

\emph{$K$-sweep.} $d=4$; $K$ target component means uniformly spaced on $[-1,2]$; initial means $= $ target means $+4$; weights $\propto (K,K{-}1,\ldots,1)$. By construction $\bar m^{(\mathrm{tar})}=0$ for all $K$, so the two IA baselines ($\nu\doteq 0$ and $\nu\doteq\bar m^{(\mathrm{tar})}$) are identical; the comparison reduces to MF vs.\ a single constant-guidance IA.

\emph{AR(1) coupling.} $d=8$, $K=2$, Scenario~B parameters; covariance $\Sigma^{(k)}_{ij} = \sigma_k^2\,\rho^{|i-j|}$, $\rho\in\{0.0, 0.5, 0.8\}$.

Code is available at \url{https://github.com/mchertkov/MeanFieldPID}.

\section*{Acknowledgements}

The author thanks the University of Arizona start-up programme for financial support. This work was initiated during sabbatical visits to the University of Michigan Institute for Computational Discovery and Engineering, the International Centre for Theoretical Physics (ICTP), the Technische Universit\"at Ilmenau (Humboldt Fellowship), Lawrence Livermore National Laboratory (faculty mini-sabbatical program), and KAIST Graduate School of AI. Scientific engagement and encouragement from colleagues at all five institutions are gratefully acknowledged. Large language models (Claude, Anthropic; ChatGPT, OpenAI) assisted with text editing and code refactoring; all mathematical derivations, scientific claims, and code were independently verified by the author.



\newpage 
\appendix  

\leftline{\bf \Large Supplementary Information}

\noindent\textbf{Overview.} This Supplementary Information (SI) provides full mathematical derivations and implementation details supporting the main text. SI\, \ref{SI:theory} 
derives the MF-PID governing equations in both terminal-cost and SOT formulations. SI\, \ref{SI:lqg}
presents the complete LQG closed-form reduction, including the scalar TCL example and explicit performance metrics. SI\,\ref{SI:quad}
develops the theory for the quadratic interaction potential: it first reduces MF-PID to a self-consistent guided H-PID, then proves the central result that the MF guidance is the \emph{exact linear interpolant} between initial and target means (Theorem~\ref{thm:linear-guidance}), and finally assembles the fully explicit Gaussian-mixture score function and marginal density. SI\, \ref{SI:dr}
provides additional experimental diagnostics for the demand-response application. SI\, \ref{SI:ia}
recalls the independent-agent PID foundation \cite{behjoo_harmonic_2025,chertkov_adaptive_2025,chertkov_generative_2025} used throughout.

\section{MF-PID: Governing Equations}
\label{SI:theory}

\subsection{Agent dynamics and mean-field limit}
\label{SI:theory:setup}

Consider $N$ exchangeable agents with dynamics
\begin{equation}
  \dd x_t^{(i)}
  = \bigl(f_t(x_t^{(i)}) + u_t^{(i)}\bigr)\dd t + \dd W_t^{(i)},
  \quad x_0^{(i)} = 0,
  \quad t\in[0,1],
  \label{SI:eq:sde-agent}
\end{equation}
where $x_t^{(i)}\in\Rbb^d$, $f_t$ is a pre-specified base drift, $u_t^{(i)}\in\Rbb^d$ is the control, and $W_t^{(i)}$ are independent standard Brownian motions. The per-agent density $p_t^{(i)}$ satisfies the Kolmogorov--Fokker--Planck (KFP) equation
\begin{equation}
  \partial_t p_t^{(i)}
  + \nabla\cdot\bigl(p_t^{(i)}(f_t + u_t^{(i)})\bigr)
  = \tfrac{1}{2}\Delta p_t^{(i)},
  \quad p_0^{(i)} = \delta(x).
  \label{SI:eq:kfp-per-agent}
\end{equation}
As $N\to\infty$ $p_t^{(N)} \doteq  N^{-1}\sum_i p_t^{(i)}$ converges to the mean-field marginal $p_t$ governed by
\begin{equation}
  \partial_t p_t + \nabla\cdot\bigl(p_t(f_t + u_t)\bigr)
  = \tfrac{1}{2}\Delta p_t,
  \quad p_0 = \delta(x),
  \label{SI:eq:kfp-mf}
\end{equation}
where $u_t = u_t(x, p_t(\cdot))$. A representative agent then obeys the McKean--Vlasov SDE
(main text, eq.~(2)).

\subsection{Formulation A: Terminal-cost MF-PID}
\label{SI:theory:terminal-cost}

The mean-field cost-to-go is
\begin{equation}
  J_t(x, p_t) = \inf_{u_{t\to1}} \mathbb{E}\!\left[
    \int_t^1\!\!\left(\tfrac{1}{2}\|u_{t'}\|^2
    + \int V_{t'}(x_{t'}-y)\,p_{t'}(y)\,\dd y\right)\dd t'
    + \varphi(x_1)
    \;\Big|\;
    \text{eqs.~\eqref{SI:eq:kfp-mf}, \eqref{SI:eq:sde-agent}},\,
    x_t = x
  \right],
  \label{SI:eq:cost-J}
\end{equation}
where $V_t(x-y)$ is an interaction potential and $\varphi$ is a terminal cost.

We work in the classical mean-field control setting where the representative agent solves a control problem with coefficients depending on the population law $p_t$; at equilibrium $p_t$ is generated by the optimal control (self-consistency). Then the mean-field HJB equation is
\begin{equation}
  -\partial_t J_t
  = \underbrace{\int V_t(x-y)\,p_t(y)\,\dd y}_{\Veff_t(x)}
    + f_t(x)\cdot\nabla_x J_t
    + \tfrac{1}{2}\Delta_x J_t
    - \tfrac{1}{2}\|\nabla_x J_t\|^2,
  \quad J_1 = \varphi.
  \label{SI:eq:hjb-mf}
\end{equation}

\paragraph{Hopf--Cole linearization (exact - not an approximation).} Setting $J_t = -\log\psi_t$ in \eqref{SI:eq:hjb-mf} yields the mean-field linear HJB (MF-lin-HJB) equation:
\begin{equation}
  -\partial_t\psi_t + \Veff_t(x)\,\psi_t
  = f_t(x)\cdot\nabla_x\psi_t + \tfrac{1}{2}\Delta_x\psi_t,
  \quad \psi_1 = e^{-\varphi}.
  \label{SI:eq:lin-hjb}
\end{equation}
\begin{remark}[Linear conditional on $p_t$]
\eqref{SI:eq:lin-hjb} is linear in $\psi$
for a given effective potential $\Veff_t$, but the overall MF system
remains nonlinear through the self-consistency $\Veff_t(x)=\int
V_t(x-y)\,p_t(y)\,\dd y$.
\end{remark}

The optimal control and the optimal marginal density are
\begin{align}
  u_t^*(x) &= \nabla_x\log\psi_t(x),
  \label{SI:eq:u-star-terminal}\\
  \partial_t p_t^*
    + \nabla\cdot\bigl(p_t^*(f_t + u_t^*)\bigr)
  &= \tfrac{1}{2}\Delta p_t^*,
  \quad p_0^* = \delta(x).
  \label{SI:eq:kfp-optimal}
\end{align}

\paragraph{Green-function representation.} Because \eqref{SI:eq:lin-hjb} and \eqref{SI:eq:kfp-optimal} are (quasi-) linear in $\psi$ and $p$ respectively, their solutions can be expressed via \emph{effective Green functions} $G_t^{(-;\mfeff)}(x;y)$ and $G_t^{(+;\mfeff)}(x;y)$ satisfying
\begin{align}
  &t\in[1\to0]:\quad
  -\partial_t G_t^{(-;\mfeff)} + \Veff_t(x)\,G_t^{(-;\mfeff)}
  = f_t(x)\cdot\nabla_x G_t^{(-;\mfeff)}
    + \tfrac{1}{2}\Delta_x G_t^{(-;\mfeff)},
  \label{SI:eq:green-minus}\\
  &\hspace{3cm} G_1^{(-;\mfeff)}(x;y) = \delta(x-y), \nonumber\\
  &t\in[0\to1]:\quad
  \partial_t G_t^{(+;\mfeff)} + \Veff_t(x)\,G_t^{(+;\mfeff)}
  = -\nabla_x \cdot \left( f_t(x) G_t^{(+;\mfeff)}\right)
    + \tfrac{1}{2}\Delta_x G_t^{(+;\mfeff)},
  \label{SI:eq:green-plus}\\
  &\hspace{3cm} G_0^{(+;\mfeff)}(x;y) = \delta(x-y). \nonumber
\end{align}
Then
\begin{equation}
  \psi_t(x) = \int e^{-\varphi(y)}\,G_t^{(-;\mfeff)}(x;y)\,\dd y,
  \qquad
  p_t^*(x)
  = G_t^{(+;\mfeff)}(x;0)\,\frac{\psi_t(x)}{\psi_0(0)}.
  \label{SI:eq:psi-p-green}
\end{equation}

The MF coupling enters through $\Veff_t(x) = \int V_t(x-y)\,p_t(y)\,\dd y$, which depends on $p_t$ itself. Eqs.~\eqref{SI:eq:kfp-optimal}--\eqref{SI:eq:green-plus} therefore constitute a \emph{self-consistent} system: in general and unlike the independent-agent case, the Green functions cannot be computed independently of the population density. 

\begin{remark}[Heads up: Quadratic Isotropic Potential] We will see below in Section \ref{SI:quad:linear} that in the case of a quadratic potential $V_t(x)=\beta_t x^2/2$ and zero basic drift $f_t=0$, the MF reduces to guided H-PID with explicit guidance -- an independent-agent case with the linear-in-time interpolant between initial mean and target mean taken as a guidance $\nu_t$ within H-PID with the potential $V_t(x)=\beta_t (x-\nu_t)^2/2$.
\end{remark}

\subsection{Formulation B: Stochastic Optimal Transport (SOT)}
\label{SI:theory:sot}

Replace the terminal cost $\varphi$ with a hard marginal constraint
$p_1 = \ptar$.
The optimal control becomes
\begin{equation}
  u_t^*(x)
  = \nabla_x\log\int\!\!\ptar(y)\,
      \frac{G_t^{(-;\mfeff)}(x;y)}{G_1^{(+;\mfeff)}(y;0)}\,\dd y,
  \label{SI:eq:u-star-SOT}
\end{equation}
and the density evolves under \eqref{SI:eq:kfp-optimal} with $p_0^*=\delta(\cdot)$ and $p_1^*=\ptar$. The governing system is \eqref{SI:eq:green-minus}--\eqref{SI:eq:green-plus} with \eqref{SI:eq:u-star-SOT} substituted into \eqref{SI:eq:kfp-optimal}.

Equivalently, $$p_t^*(x)\propto \psi_t(x) G_t^{(+;\mfeff)}(x;0),$$ where $\psi_t$ solves the backward equation \eqref{SI:eq:lin-hjb}.

\begin{remark}[Comparison with mean-field Schr\"odinger bridges]
The mean-field Schr\"odinger problem studied in \cite{backhoff-veraguas_adapted_2020,hernandez_propagation_2024} corresponds to
$f_t=0$, $V_t=0$ with endpoint constraints.
Our formulation allows an arbitrary base drift $f_t$ and an explicit
running interaction potential $V_t(x-y)$, and does not reduce to the
previously studied mean-field Schr\"odinger systems.
\end{remark}

\section{LQG MF-PID: Closed-Form Reduction}
\label{SI:lqg}

\subsection{Model specification}
\label{SI:lqg:model}

We specialize to:
\begin{itemize}
  \item \textbf{Linear drift:} $f_t(x) = A_t x + B_t$,
    $A_t\in\Rbb^{d\times d}$, $B_t\in\Rbb^d$.
  \item \textbf{Quadratic interaction:} $V_t(x) = \tfrac{1}{2}x^\top Q_t x$,
    $Q_t\succeq 0$.
  \item \textbf{Gaussian target:}
    $\ptar(x) = \calN(x;\,m_1^{(\mathrm{tar})},\Sigma_1^{(\mathrm{tar})})$,
    $\Sigma_1^{(\mathrm{tar})}\succ 0$.
\end{itemize}
Assuming $p_0=\delta(\cdot)$, the controlled SDE \eqref{SI:eq:sde-agent}
with linear optimal control preserves Gaussianity:
$p_t = \calN(x;\,m_t,\Sigma_t)$ for all $t$.

\subsection{Effective potential and quadratic ansatz}
\label{SI:lqg:eff-potential}

Substituting the Gaussian $p_t$ into $\Veff_t$:
\begin{equation}
  \Veff_t(x)
  = \tfrac{1}{2}(x-m_t)^\top Q_t(x-m_t)
    + \tfrac{1}{2}\tr(Q_t\Sigma_t).
  \label{SI:eq:Veff-lqg}
\end{equation}
The trace term is $x$-independent and does not affect $\nabla_x J_t$ or
the control.
We seek a quadratic cost-to-go
$J_t(x) = \tfrac{1}{2}x^\top S_t x + s_t^\top x + r_t$,
giving the affine optimal control
\begin{equation}
  u_t^*(x) = -S_t x - s_t.
  \label{SI:eq:u-star-lqg}
\end{equation}

\subsection{The closed ODE system}
\label{SI:lqg:odes}

Substituting \eqref{SI:eq:Veff-lqg} and the quadratic ansatz into the
MF-HJB \eqref{SI:eq:hjb-mf} and matching polynomial terms in $x$
yields the following closed system.

\begin{proposition}[LQG MF-PID reduction]
\label{prop:lqg}
Under the LQG model specification of \S\ref{SI:lqg:model}, the
optimal mean-field cost-to-go is quadratic and the four coupled ODEs
\begin{tcolorbox}[colback=gray!8, colframe=gray!40,
  title={\color{black} LQG MF-PID Equations}]
\begin{align}
  -\dot S_t &= Q_t + S_t A_t + A_t^\top S_t - S_t^2,
  \tag{S2.1}\label{SI:eq:riccati-S}\\
  -\dot s_t &= -Q_t m_t + A_t^\top s_t - S_t s_t + S_t B_t,
  \tag{S2.2}\label{SI:eq:riccati-s}\\
  \dot m_t &= (A_t - S_t)m_t + B_t - s_t,
  \tag{S2.3}\label{SI:eq:mean-ode}\\
  \dot\Sigma_t &= (A_t-S_t)\Sigma_t + \Sigma_t(A_t-S_t)^\top + I,
  \tag{S2.4}\label{SI:eq:cov-ode}
\end{align}
\end{tcolorbox}
with boundary conditions
$m_0=0$, $\Sigma_0=0$,
$m_1=m_1^{(\mathrm{tar})}$, $\Sigma_1=\Sigma_1^{(\mathrm{tar})}$,
characterise the unique optimal control via
$u_t^*(x) = -S_t x - s_t$.
\end{proposition}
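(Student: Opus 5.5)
The plan is to verify the proposition by substituting the quadratic ansatz into the MF-HJB \eqref{SI:eq:hjb-mf} and taking moments of the resulting linear SDE. Gaussianity and uniqueness are then handled separately.

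\emph{Step 1: the backward equations.} Take $J_t(x)=\tfrac12 x^\top S_t x+s_t^\top x+r_t$ with $S_t$ symmetric. Then $\nabla_x J_t=S_t x+s_t$ and $\Delta_x J_t=\tr S_t$. With $f_t=A_tx+B_t$ and $\Veff_t$ from \eqref{SI:eq:Veff-lqg}, the right-hand side of \eqref{SI:eq:hjb-mf} is a quadratic polynomial in $x$. Matching coefficients gives the following:
\begin{itemize}
\item The quadratic part, after symmetrising $x^\top S_tA_tx$, gives \eqref{SI:eq:riccati-S}.
\item The linear part gives \eqref{SI:eq:riccati-s}. Here $-Q_tm_t$ comes from $\Veff_t$, while $A_t^\top s_t$ and $S_tB_t$ come from $f_t\cdot\nabla J_t$, and $-S_ts_t$ comes from $-\tfrac12\|\nabla J_t\|^2$.
\item The constant part gives an ODE for $r_t$. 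It contains $\tr(Q_t\Sigma_t)$ and does not affect the control, so it is discarded.
\end{itemize}
Hopf--Cole then gives $u_t^*=\nabla\log\psi_t=-\nabla J_t=-S_tx-s_t$, which is \eqref{SI:eq:u-star-lqg}.

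\emph{Step 2: the forward equations.} Under this control the SDE \eqref{SI:eq:sde-agent} becomes linear: $\dd x_t=((A_t-S_t)x_t+B_t-s_t)\dd t+\dd W_t$ with $x_0=0$. Its law is therefore Gaussian for all $t$, which justifies the Gaussian assumption used in computing $\Veff_t$. Taking expectations gives \eqref{SI:eq:mean-ode}. Applying Itô's formula to $(x_t-m_t)(x_t-m_t)^\top$ gives the Lyapunov equation \eqref{SI:eq:cov-ode}. The self-consistency loop closes because the $m_t$ appearing in \eqref{SI:eq:riccati-s} is exactly the mean produced by \eqref{SI:eq:mean-ode}.

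\emph{Step 3: the boundary conditions.} In the SOT formulation the terminal data for $S$ and $s$ are free. They are replaced by the constraint $p_1=\ptar$, and Gaussianity reduces this to $m_1=m_1^{(\mathrm{tar})}$ and $\Sigma_1=\Sigma_1^{(\mathrm{tar})}$. Equivalently, the terminal potential $\varphi$ is quadratic with unknown coefficients that play the role of Lagrange multipliers. The system splits into two blocks, solved in order:
\begin{itemize}
\item \emph{Covariance block.} The pair $(S,\Sigma)$ is a two-point problem. $S_t$ is parametrised by its unknown initial or terminal value, and the shooting map to $\Sigma_1$ is shown to be monotone. The scalar case reproduces \eqref{eq:rho}.
\item \emph{Mean block.} Once $S$ is known, $(m,s)$ satisfies a linear two-point boundary value problem.
\end{itemize}

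\emph{Step 4: uniqueness (main obstacle).} Uniqueness has two parts.
\begin{itemize}
\item For the covariance shooting map, I would use the Riccati comparison principle to show that $\Sigma_1$ is strictly monotone in the free parameter of $S$, so the constraint picks out a single solution.
\item For the mean block, the linear map from the unknown $s_1$ (or $s_0$) to $m_1$ must be invertible. I would argue this through convexity: for fixed Gaussian marginal constraints the mean-field control cost is strictly convex in the control, since $Q_t\succeq0$ and the running cost $\tfrac12\|u\|^2$ is strictly convex. This rules out two distinct solutions and hence gives invertibility. If that argument proves too delicate, the fallback is to assume nondegeneracy explicitly, namely that the associated controllability Gramian is invertible.
\end{itemize}

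Throughout, the main technical care is in justifying that an affine feedback solving the HJB is optimal among all controls. A verification theorem handles this, using the fact that $J_t$ is a classical quadratic solution.
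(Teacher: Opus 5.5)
Your Steps 1--3 follow the paper's argument: quadratic ansatz and coefficient matching in \eqref{SI:eq:hjb-mf}, Gaussian closure of the linear closed-loop SDE, and the variance/mean decoupling. The paper stops there and asserts uniqueness without proof, so the word ``unique'' rests on your Step 4, and both halves of it have a gap.

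\emph{Mean block.} Read as the individual cost with $p_t$ frozen, strict convexity of $\tfrac12\|u\|^2+\Veff_t$ (this is where $Q_t\succeq0$ enters) gives only a unique \emph{best response} to a given mean path $m_t$. But \eqref{SI:eq:riccati-S}--\eqref{SI:eq:cov-ode} is a fixed-point (Nash) system, and a unique best response does not imply a unique fixed point. Read instead as the population cost $\mathbb{E}\int\Veff_t(x_t)\,\dd t=\int\tr(Q_t\Sigma_t)\,\dd t$, its optimality system has $2Q_t$ where the proposition has $Q_t$. The coupling is also attractive, hence anti-monotone in the Lasry--Lions sense: $\int(\Veff[p_1]-\Veff[p_2])\,\dd(p_1-p_2)=-(\bar m_1-\bar m_2)^\top Q_t(\bar m_1-\bar m_2)\le0$. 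So $Q_t\succeq0$ works against the usual uniqueness mechanism, not for it.

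The fix is a direct computation. Substitute $s_t=(A_t-S_t)m_t+B_t-\dot m_t$ into \eqref{SI:eq:riccati-s} and use \eqref{SI:eq:riccati-S}. Both $S_t$ and $Q_t$ cancel, leaving
\[
\ddot m_t=(A_t-A_t^\top)\dot m_t+(\dot A_t+A_t^\top A_t)\,m_t+\dot B_t+A_t^\top B_t ,
\]
which is the Euler--Lagrange equation of $\int_0^1\tfrac12\|\dot m_t-A_tm_t-B_t\|^2\,\dd t$. It reduces to $\ddot m_t=\kappa^2m_t$ in the scalar example and matches Cor.~\ref{cor:OU}. For the difference of two solutions, $w_t=\dot m_t-A_tm_t$ solves $\dot w_t=-A_t^\top w_t$. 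Hence $0=m_1=\Phi(1,0)\bigl[\int_0^1\Phi(0,\tau)\Phi(0,\tau)^\top\,\dd\tau\bigr]w_0$, with $\Phi$ the transition matrix of $A_t$. This Gramian is positive definite because the control enters through the identity, so $w\equiv0$, $m\equiv0$ and $s\equiv0$. Your ``fallback'' nondegeneracy condition therefore holds automatically and need not be assumed.

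\emph{Covariance block.} For $d>1$ the free parameter $S_1$ is a symmetric matrix. Riccati comparison gives monotonicity in the Loewner order, which only separates \emph{comparable} choices of $S_1$; it does not make $S_1\mapsto\Sigma_1$ injective. Use the Dirac initial condition instead. For any solution, set $D_t=\Sigma_t^{-1}-S_t$; it satisfies $\dot D_t=Q_t-A_t^\top D_t-D_tA_t-D_t^2$. Since $\Sigma_t=tI+O(t^2)$ and $S_t$ is bounded, $X_t=D_t^{-1}$ solves the regular Kalman--Bucy-type equation $\dot X_t=I+A_tX_t+X_tA_t^\top-X_tQ_tX_t$ with $X_0=0$. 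This forces $D_t=P_t$, the precision of $G_t^{(+)}(\cdot;0)$, which is independent of the target and of $m_t$. Consequently $S_1=(\Sigma_1^{(\mathrm{tar})})^{-1}-P_1$ is pinned explicitly; this is just $\psi_1=\ptar/G_1^{(+)}(\cdot;0)$ read off from \eqref{SI:eq:u-star-SOT}. $S_t$ then follows by backward integration. In the scalar case $P_1=\kappa+\Delta\coth\Delta$, which reproduces \eqref{SI:eq:rho-scalar} without any shooting. With these two replacements your argument is complete, and it establishes the uniqueness that the paper only asserts.
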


\paragraph{Structural decoupling.}
Equations \eqref{SI:eq:riccati-S}--\eqref{SI:eq:cov-ode} separate
into two sequential sub-problems:
\begin{enumerate}[1.]
  \item \textbf{Variance block} $(S_t, \Sigma_t)$:
    Equations \eqref{SI:eq:riccati-S} and \eqref{SI:eq:cov-ode} depend
    only on $Q_t$ and $A_t$, and are \emph{identical} in the MF and IA
    cases.
    They are solved first as a two-point boundary-value problem in
    $\Sigma$ (shooting on $S_1$).
  \item \textbf{Mean block} $(m_t, s_t)$:
    Given $S_t$, equations \eqref{SI:eq:riccati-s}--\eqref{SI:eq:mean-ode}
    are linear in $(m_t, s_t)$ with boundary conditions $m_0=0$,
    $m_1=m_1^{(\mathrm{tar})}$.
    The MF and IA cases differ here: the MF source term is $-Q_t m_t$
    (coupling to the \emph{current} population mean), whereas the IA
    source is $-Q_t\bar m$ (a fixed exogenous centre).
\end{enumerate}
This decoupling is the key structural result:
\emph{MF coordination operates entirely through the mean block}.

\subsection{Scalar TCL example: complete closed-form solution}
\label{SI:lqg:tcl}

Set $d=1$, $A_t=-\kappa$ ($\kappa>0$), $B_t=0$, $Q_t=q$ ($q\geq0$),
$\ptar = \calN(m^{(\mathrm{tar})},(\sigma^{(\mathrm{tar})})^2)$.

\subsubsection{Step 1: Riccati equation}
With $\Delta\doteq\sqrt{\kappa^2+q}$, the scalar Riccati equation
$\dot S_t = S_t^2 + 2\kappa S_t - q$ is solved by
\begin{equation}
  S_t = -\kappa + \Delta\,
  \frac{1 + \rho\,e^{-2\Delta(1-t)}}{1 - \rho\,e^{-2\Delta(1-t)}},
  \label{SI:eq:St-scalar}
\end{equation}
where $\rho = (S_1+\kappa-\Delta)/(S_1+\kappa+\Delta)$.

\subsubsection{Step 2: Variance matching (bridge constraint)}
The covariance ODE $\dot\Sigma_t = -2(\kappa+S_t)\Sigma_t + 1$,
$\Sigma_0=0$ has the explicit solution
\begin{equation}
  \Sigma_1 = \frac{1-\rho}{1-\rho r_0}\cdot\frac{1-r_0}{2\Delta},
  \quad r_0 = e^{-2\Delta}.
  \label{SI:eq:Sigma1}
\end{equation}
Imposing $\Sigma_1 = (\sigma^{(\mathrm{tar})})^2$ yields the
closed-form shooting parameter:
\begin{equation}
  \rho = \frac{A-1}{A r_0 - 1},
  \quad
  A = \frac{2\Delta(\sigma^{(\mathrm{tar})})^2}{1-r_0},
  \label{SI:eq:rho-scalar}
\end{equation}
and then $S_1 = -\kappa + \Delta(1+\rho)/(1-\rho)$.

\subsubsection{Step 3: Mean dynamics}
With $S_t$ determined, eliminating $s_t$ from
\eqref{SI:eq:riccati-s}--\eqref{SI:eq:mean-ode} yields
$\ddot m_t - \kappa^2 m_t = 0$,
whose solution satisfying $m_0=0$, $m_1=m^{(\mathrm{tar})}$ is
\begin{equation}
  m_t^{(\MF)}
  = m^{(\mathrm{tar})}\,\frac{\sinh(\kappa t)}{\sinh(\kappa)}.
  \label{SI:eq:mt-mf}
\end{equation}
The linear coefficient is then
\begin{equation}
  s_t = -\dot m_t - (\kappa + S_t)\,m_t
  = -\frac{\kappa\,m^{(\mathrm{tar})}\cosh(\kappa t)}{\sinh(\kappa)}
    - (\kappa + S_t)
      \frac{m^{(\mathrm{tar})}\sinh(\kappa t)}{\sinh(\kappa)}.
  \label{SI:eq:st-mf}
\end{equation}

\subsubsection{Step 4: IA baseline}
For the IA family with exogenous centre $\bar m$,
\eqref{SI:eq:riccati-s} becomes
$\dot s_t^{(\IA)} = q\bar m + (\kappa+S_t)s_t^{(\IA)}$,
with solution
\begin{equation}
  s_t^{(\IA)} = g(t)\bigl(s_0^{(\IA)} + q\bar m\,J(t)\bigr),
  \label{SI:eq:st-ia}
\end{equation}
where $g(t) = e^{\Delta t}(1-\rho e^{-2\Delta})/(1-\rho e^{2\Delta(t-1)})$
and $J(t) = (1-e^{-2\Delta t}-\rho e^{-2\Delta}(e^{2\Delta t}-1))/(2\Delta(1-\rho e^{-2\Delta}))$.
The initial condition $s_0^{(\IA)}$ is fixed by the bridge constraint
$m_1^{(\IA)} = m^{(\mathrm{tar})}$:
\begin{equation}
  s_0^{(\IA)} = -\frac{m^{(\mathrm{tar})} + q\bar m\,\tilde B(\rho)}{\tilde A(\rho)},
  \quad
  \tilde A(\rho) = g(1)^{-1}\!\int_0^1\!\!g(u)^2\,\dd u,
  \quad
  \tilde B(\rho) = g(1)^{-1}\!\int_0^1\!\!g(u)^2 J(u)\,\dd u.
  \label{SI:eq:s0-ia}
\end{equation}

\subsubsection{Performance metrics}
Three metrics differentiate MF from IA:
\begin{align}
  \mu_u(t) &= -(S_t m_t + s_t), \quad
  \sigma_u^2(t) = S_t^2\sigma_t^2
    && \text{(control mean and variance)},\\
  \calP(t) &= S_t^2\sigma_t^2 + (S_t m_t + s_t)^2
    && \text{(instantaneous power)},\\
  \calE(t) &= \int_0^t \calP(u)\,\dd u
    && \text{(cumulative energy)}.
\end{align}
Since $S_t$ and $\sigma_t^2$ are identical across schemes, the energy
difference arises exclusively from the linear coefficient $s_t$.
The MF controller achieves strictly lower $\calE(1)$ than any
exogenously centred IA strategy.

\section{MF-PID with Quadratic Interaction Potential}
\label{SI:quad}

\subsection{Reduction to self-consistent guided H-PID}
\label{SI:quad:reduction}

We now consider MF-PID in the case of an isotropic quadratic interaction
potential
\begin{equation}
  V_t(x) = \frac{\beta_t}{2}\|x\|^2, \quad \beta_t > 0,
  \label{SI:eq:V-quadratic}
\end{equation}
with \emph{arbitrary} base drift $f_t$ and \emph{arbitrary} initial and target distributions. Computing the effective potential \eqref{SI:eq:cost-J} gives
\begin{equation}
  \Veff_t(x) = \int V_t(x-y)\,p_t(y)\,\dd y= \frac{\beta_t}{2}\|x - m_t\|^2 + \frac{\beta_t}{2}\tr\Sigma_t,
  \label{SI:eq:Veff-quad}
\end{equation}
where $m_t = \mathbb{E}_{p_t}[x]$ is the population mean and $\Sigma_t = \mathrm{Cov}_{p_t}[x]$. The trace term is $x$-independent and does not affect the control. Defining the guidance centre $\nu_t \doteq m_t$, the effective potential reduces to $\Veff_t(x) = \tfrac{\beta_t}{2}\|x-\nu_t\|^2 + \text{const}$, which is the quadratic guidance potential of the H-PID framework \cite{behjoo_harmonic_2025,chertkov_generative_2025}.

\begin{proposition}[MF-PID as self-consistent guided H-PID]
\label{prop:mf-hpid}
For the quadratic potential \eqref{SI:eq:V-quadratic}, MF-PID is equivalent to a guided H-PID whose guidance trajectory $\nu_t = \nu_t^{(\MF)}$ is determined endogenously by the self-consistency condition
\begin{equation}
  \nu_t^{(\MF)}= \mathbb{E}_{x\sim p_t^{*}(\,\cdot\,|\,\nu^{(\MF)})}[x], \quad t\in[0,1].
  \label{SI:eq:self-consistency}
\end{equation}
In the SOT formulation, the corresponding optimal control is
\begin{equation}
  u_t^*(x)= \nabla_x\log\int\!\!\ptar(y)\,\frac{G_t^{(-)}(x;y\,|\,\nu^{(\MF)})}{G_1^{(+)}(y;0\,|\,\nu^{(\MF)})}\,\dd y,
  \label{SI:eq:u-star-quad}
\end{equation}
where $G_t^{(\pm)}(\,\cdot\,|\,\nu^{(\MF)})$ are the Green functions of the H-PID system evaluated under the guidance $\nu^{(\MF)}$.
\end{proposition}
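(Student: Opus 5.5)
The plan is to prove the reduction by direct substitution into the MF governing equations of SI\,\S\ref{SI:theory}, treating the population law as frozen, and then closing the loop with self-consistency.

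\textbf{Step 1: effective potential.} Substituting $V_t(x)=\tfrac{\beta_t}{2}\|x\|^2$ into $\Veff_t(x)=\int V_t(x-y)p_t(y)\,\dd y$ and expanding $\|x-y\|^2=\|x-m_t\|^2-2(x-m_t)\cdot(y-m_t)+\|y-m_t\|^2$ gives \eqref{SI:eq:Veff-quad}. The cross term integrates to zero because $m_t$ is the mean of $p_t$. The last term gives $\tfrac{\beta_t}{2}\tr\Sigma_t$. Only finite second moments of $p_t$ are needed here; if $p_t$ has only first moments, the constant is infinite, but it can be dropped as an $x$-independent renormalisation of the cost.

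\textbf{Step 2: equivalence of the HJB/Green-function systems.} An $x$-independent additive constant $c_t$ in the potential changes the solution of the linear HJB equation \eqref{SI:eq:lin-hjb} only by a factor $\exp(-\int_t^1 c_{t'}\,\dd t')$. It rescales $G^{(\pm;\mfeff)}$ in \eqref{SI:eq:green-minus}--\eqref{SI:eq:green-plus} by similar time-only factors. Therefore:
\begin{itemize}
\item $\nabla_x\log\psi_t$ is unchanged;
\item in the SOT formula \eqref{SI:eq:u-star-SOT}, the ratio $G_t^{(-)}(x;y)/G_1^{(+)}(y;0)$ changes only by a $y$-independent factor, which disappears under $\nabla_x\log$.
\end{itemize}
Hence the MF Green functions coincide, up to these factors, with the H-PID Green functions for the guidance potential $\tfrac{\beta_t}{2}\|x-\nu_t\|^2$ with $\nu_t=m_t$. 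This yields \eqref{SI:eq:u-star-quad}.

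\textbf{Step 3: self-consistency.} Given any trajectory $\nu$, guided H-PID produces a marginal $p_t^*(\cdot\,|\,\nu)$ via \eqref{SI:eq:kfp-optimal}. An MF equilibrium requires that the law entering $\Veff_t$ equal the law generated by the optimal control. Since $\Veff_t$ depends on $p_t$ only through $m_t$ (up to constants), the equilibrium condition reduces exactly to $\nu_t=\mathbb{E}_{p_t^*(\cdot|\nu)}[x]$, which is \eqref{SI:eq:self-consistency}. Conversely, any $\nu$ satisfying \eqref{SI:eq:self-consistency} produces a pair $(\psi,p^*)$ that solves the MF system. The two formulations therefore have the same solution sets.

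\textbf{Main obstacle.} The delicate point is Step 2 in the SOT setting, where $G_1^{(+)}$ appears in the denominator. I would check carefully that the constant-potential factors for $G^{(-)}$ and $G^{(+)}$ make the integrand change only by a factor independent of both $x$ and $y$. The factor for $G_t^{(-)}$ is $\exp(-\int_t^1 c)$, depending on $t$ only. The factor for $G_1^{(+)}$ is $\exp(-\int_0^1 c)$, a pure constant. With that confirmed, the ratio changes only by a $t$-dependent factor, which $\nabla_x\log$ removes.
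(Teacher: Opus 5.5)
Your proposal is correct and follows essentially the same route as the paper. The paper completes the square to obtain $\Veff_t(x)=\tfrac{\beta_t}{2}\|x-m_t\|^2+\tfrac{\beta_t}{2}\tr\Sigma_t$, discards the $x$-independent trace term, and identifies $\nu_t\doteq m_t$ with the H-PID guidance, closed by the self-consistency condition. Your Step~2 (the time-only factors $\exp(-\int_t^1 c)$ and $\exp(-\int_0^1 c)$ that cancel under $\nabla_x\log$ in the SOT ratio) and your two-way solution-set argument in Step~3 make rigorous what the paper states in one line.
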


\subsection{Linearity of the MF guidance}
\label{SI:quad:linear}

The following theorem is the central analytical result for the
zero-drift case.

\begin{theorem}[Linear MF guidance]
\label{thm:linear-guidance}
Let $f_t \doteq  0$, let $V_t$ be the quadratic potential \eqref{SI:eq:V-quadratic} with an arbitrary schedule $\beta_t > 0$, and let $\pin$, $\ptar$ be any probability measures with finite first moments $\bar m^{(\mathrm{in})} = \mathbb{E}_{\pin}[x]$ and $\bar m^{(\mathrm{tar})} = \mathbb{E}_{\ptar}[x]$. Also assume that the controlled process is initialized with $x_0\sim p^{(\mathrm{in})}$ (i.e. $p_0=p^{(\mathrm{in})}$), and that the MF fixed point exists and yields finite first moments  $m_t = \mathbb{E}_{p_t^*}[x]$ for all $t$. Then
\begin{equation}
  \nu_t^{(\MF)} = m_t
  = (1-t)\,\bar m^{(\mathrm{in})} + t\,\bar m^{(\mathrm{tar})}
  \quad \text{for all } t\in[0,1].
  \label{SI:eq:nu-linear}
\end{equation}
That is, the self-consistent MF guidance is the \emph{exact} linear interpolant between initial and target global means, independently of $\beta_t$ -- for any measurable $\beta_t>0$ such that the MF bridge is well-posed and $\mathbb E\|x_t\|<\infty$ for all $t$ -- and of the shape of $\pin$ and $\ptar$.
\end{theorem}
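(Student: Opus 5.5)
The plan follows the structural cancellation indicated in the main-text sketch, carried out carefully in the SOT setting of Proposition~\ref{prop:mf-hpid}. By Proposition~\ref{prop:mf-hpid}, with $f_t\doteq 0$ the MF fixed point is a guided H-PID with effective potential $\Veff_t(x)=\tfrac{\beta_t}{2}\|x-\nu_t\|^2+\mathrm{const}$, where $\nu_t=\nu_t^{(\MF)}$ and self-consistency gives $\nu_t=m_t$. The optimal control is $u_t^*=\nabla_x\log\psi_t$, with $\psi_t$ solving the linear backward equation \eqref{SI:eq:lin-hjb} (with $f_t=0$ and terminal data given by the SOT potential built from $\ptar$ and the Green functions). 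Writing $J_t=-\log\psi_t$, the control $u_t^*=-\nabla J_t$ satisfies the HJB equation \eqref{SI:eq:hjb-mf}. Taking its spatial gradient, and using $\nabla\tfrac12\|\nabla J\|^2=(\nabla J\cdot\nabla)\nabla J$ for gradient fields, gives the Burgers-type identity
\[
\partial_t u_t^* + (u_t^*\cdot\nabla)u_t^* + \tfrac12\Delta u_t^* = \nabla_x\Veff_t = \beta_t(x-\nu_t).
\]

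Next comes the mean dynamics. Since $\dd x_t=u_t^*(x_t)\dd t+\dd W_t$, we have $m_t-m_0=\int_0^t\mathbb{E}[u_s^*(x_s)]\dd s$, so $\dot m_t=\mathbb{E}[u_t^*(x_t)]$. Applying It\^o's formula to $u_t^*(x_t)$ componentwise, or equivalently pairing the KFP equation \eqref{SI:eq:kfp-optimal} against the test function $u_t^*$ and integrating by parts, gives
\[
\ddot m_t=\mathbb{E}\bigl[\partial_t u_t^*+(u_t^*\cdot\nabla)u_t^*+\tfrac12\Delta u_t^*\bigr](x_t).
\]
Substituting the identity above yields $\ddot m_t=\beta_t(m_t-\nu_t)$, and the self-consistency $\nu_t=m_t$ gives $\ddot m_t=0$ in the distributional sense on $(0,1)$. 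Together with the boundary values $m_0=\bar m^{(\mathrm{in})}$ (from $p_0=\pin$) and $m_1=\bar m^{(\mathrm{tar})}$ (from the hard constraint $p_1=\ptar$), this forces $m_t$ to be the linear interpolant. Since $\nu_t^{(\MF)}=m_t$, the guidance is linear as well. Nothing in the computation involves $\beta_t$ beyond its appearance in a term that vanishes, and nothing involves the shapes of $\pin,\ptar$ beyond their first moments. This is the claimed independence.

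The main obstacle is justifying the calculus near the endpoints and for general measures. The score $u_t^*$ typically blows up as $t\to1$ (bridge singularity) and as $t\to0$ when $\pin$ is singular, e.g.\ a delta. Moreover, $\mathbb{E}[\,\cdot\,]$ of the Itô expansion requires the local-martingale term $\int\nabla u^*\,\dd W$ to be a true martingale, together with integrability of $(u^*\cdot\nabla)u^*$ and $\Delta u^*$. To avoid needing these bounds globally, I would work on $[\epsilon,1-\epsilon]$, where $\psi_t$ is smooth and positive by parabolic regularity of \eqref{SI:eq:lin-hjb} with quadratic potential. There I would derive $\dot m_t$ constant in the weak form: pair KFP with the test function $u_t^*$ after localising with cutoffs $\chi_R$, and use the finite-first-moment and well-posedness hypotheses to remove the cutoff. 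This gives $m_t$ affine on every compact subinterval, hence affine on $(0,1)$. I would then extend to $[0,1]$ using continuity of $t\mapsto m_t$, which follows from $\mathbb{E}\|x_t\|<\infty$ and the weak continuity of the marginals $p_t^*$ at the endpoints. This continuity argument sidesteps any integrability of $u^*$ near $t=0,1$.
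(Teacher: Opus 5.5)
Your proposal is correct and follows the paper's proof essentially step for step: Itô's formula gives $\ddot m_t$, the spatial gradient of the HJB gives $\partial_t u_t^*$, the $(u_t^*\cdot\nabla)u_t^*$ and $\tfrac12\Delta u_t^*$ terms cancel to leave $\ddot m_t=\mathbb{E}[\nabla_x\Veff_t(x_t)]=\beta_t(m_t-\nu_t^{(\MF)})$, and self-consistency plus the boundary values finish the argument; your localisation on $[\epsilon,1-\epsilon]$ is extra rigor the paper does not attempt. One small fix in that extra layer: weak continuity of $p_t^*$ plus $\mathbb{E}\|x_t\|<\infty$ at each $t$ does not by itself give continuity of $t\mapsto m_t$ at the endpoints (that needs uniform integrability), but your own identity $m_t=m_0+\int_0^t\mathbb{E}[u_s^*(x_s)]\,\dd s$, combined with finite control energy $\int_0^1\mathbb{E}\|u_s^*(x_s)\|^2\,\dd s<\infty$ (implicit in well-posedness of the bridge), makes $m_t$ absolutely continuous on $[0,1]$ and closes this step.
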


\begin{proof}
We show that $\ddot m_t = 0$.
\medskip

\noindent\textbf{Step 1 (It\^o + mean acceleration).}
With $f_t\doteq 0$, the McKean--Vlasov SDE is
$\dd x_t = u_t^*(x_t)\,\dd t + \dd W_t$.
Differentiating $m_t = \mathbb{E}[x_t]$ gives
$\dot m_t = \mathbb{E}[u_t^*(x_t)]$.
Applying It\^o's formula to $u_t^*(x_t)$ and taking expectations:
\begin{equation}
  \ddot m_t
  = \mathbb{E}\!\left[
      \partial_t u_t^*
      + (u_t^*\cdot\nabla_x)u_t^*
      + \tfrac{1}{2}\Delta u_t^*
    \right].
  \label{SI:eq:proof-ddot-m}
\end{equation}

\noindent\textbf{Step 2 (Differentiated HJB in space).} With $J_t = -\log\psi_t$ and $u_t^* = \nabla_x\log\psi_t$, the HJB equation \eqref{SI:eq:hjb-mf} with $f_t=0$ reads
\begin{equation*}
  \partial_t J_t= -\Veff_t + \tfrac{1}{2}|u_t^*|^2 + \tfrac{1}{2}\Delta\log\psi_t.
\end{equation*}
Taking the gradient in $x$ and using the symmetry of $\nabla_xu_t^* = \mathrm{Hess}(\log\psi_t)$:
\begin{equation}
  \partial_t u_t^*= \nabla_x\Veff_t- \underbrace{(\nabla_xu_t^*)\,u_t^*}_{=(u_t^*\cdot\nabla_x)u_t^*}- \tfrac{1}{2}\Delta u_t^*.
  \label{SI:eq:proof-dtu}
\end{equation}

\noindent\textbf{Step 3 (Cancellation).} Substituting \eqref{SI:eq:proof-dtu} into \eqref{SI:eq:proof-ddot-m}, the $(u_t^*\cdot\nabla_x)u_t^*$ and $\tfrac{1}{2}\Delta u_t^*$ terms cancel exactly, leaving
\begin{equation*}
  \ddot m_t = \mathbb{E}\!\left[\nabla_x\Veff_t(x_t)\right].
\end{equation*}
This identity holds for any interaction potential; the complex score structure of $u_t^*$ does not appear.

\noindent\textbf{Step 4 (Quadratic potential + self-consistency).} For the quadratic potential \eqref{SI:eq:Veff-quad}:
\begin{equation*}
  \nabla_x\Veff_t(x) = \beta_t(x - \nu_t^{(\MF)}).
\end{equation*}
Therefore
\begin{equation*}
  \ddot m_t
  = \beta_t\,\mathbb{E}[x_t - \nu_t^{(\MF)}]
  = \beta_t\,(m_t - \nu_t^{(\MF)}).
\end{equation*}
At the MF fixed point, the self-consistency condition
\eqref{SI:eq:self-consistency} gives $\nu_t^{(\MF)} = m_t$, so
\begin{equation*}
  \ddot m_t = \beta_t\,(m_t - m_t) = 0.
\end{equation*}

\noindent\textbf{Conclusion.}
With $\ddot m_t = 0$ and boundary conditions
$m_0 = \bar m^{(\mathrm{in})}$, $m_1 = \bar m^{(\mathrm{tar})}$,
we obtain $m_t = (1-t)\,\bar m^{(\mathrm{in})} + t\,\bar m^{(\mathrm{tar})}$,
and since $\nu_t^{(\MF)} = m_t$, the claim \eqref{SI:eq:nu-linear} follows.
\end{proof}

\begin{remark}[Independence of $\beta_t$, initial law, and target] The proof uses only three ingredients: the It\^o--HJB cancellation (Step 3), which is a structural identity holding for any smooth $u_t^*$; the linearity of $\nabla_x\Veff_t$ in $x$ (Step 4), which is a consequence of the quadratic form of $V_t$; and the self-consistency relation $\nu_t^{(\MF)} = m_t$. The $\beta_t$ schedule, the shape of $\pin$ and $\ptar$ enter only through the score function $u_t^*$, which has already canceled out. Theorem~\ref{thm:linear-guidance} therefore holds for arbitrary $\pin$ and $\ptar$  and for any  $\beta_t$ resulting in a well-defined densities.
\end{remark}

\begin{corollary}[Linearity for delta initial condition]
\label{cor:delta-IC}
When $\pin = \delta(\cdot)$ (i.e.\ $\bar m^{(\mathrm{in})}=0$),
Theorem~\ref{thm:linear-guidance} gives
$\nu_t^{(\MF)} = t\,\bar m^{(\mathrm{tar})}$.
\end{corollary}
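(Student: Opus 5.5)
The statement is Corollary~\ref{cor:delta-IC}, so the plan is simply to specialize Theorem~\ref{thm:linear-guidance} to a point-mass initial law.

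\emph{Step 1: check the hypotheses.} Take $\pin=\delta(\cdot)$, the point mass at the origin. This matches the setting of SI~\S\ref{SI:theory:setup}, where $x_0^{(i)}=0$ and $p_0=\delta(x)$. The Dirac measure has finite first moment $\bar m^{(\mathrm{in})}=\int x\,\delta(\dd x)=0$. The target $\ptar$ is assumed to have a finite first moment. The remaining standing assumptions are existence of the MF fixed point and finiteness of $m_t$ for all $t$; I would inherit them verbatim, since the corollary is stated under the same well-posedness conditions.

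\emph{Step 2: substitute.} Inserting $\bar m^{(\mathrm{in})}=0$ into \eqref{SI:eq:nu-linear} gives
\[
\nu_t^{(\MF)} = m_t = (1-t)\cdot 0 + t\,\bar m^{(\mathrm{tar})} = t\,\bar m^{(\mathrm{tar})},
\]
which is the claim.

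\emph{Main obstacle: the $t\downarrow 0$ boundary.} The only point needing care is that the density $p_t$ is singular at $t=0$. The score $u_t^*$ and its It\^o expansion in Step~1 of the theorem's proof are applied only for $t\in(0,1)$. So I would argue on the open interval that $\ddot m_t=0$, hence $m_t$ is affine there. Continuity of $t\mapsto \mathbb E[x_t]$ at $t=0$ then follows from the SDE with $\mathbb E\|x_t\|<\infty$, for example via $\dot m_t=\mathbb E[u_t^*(x_t)]$ being integrable near $0$. This continuity fixes the affine function through $m_0=0$ and $m_1=\bar m^{(\mathrm{tar})}$.

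If integrability of the drift near $t=0$ is in doubt, I would fall back on the Green-function representation \eqref{SI:eq:psi-p-green}. With $G^{(+)}_t(x;0)\to\delta(x)$ as $t\downarrow 0$, $p_t^*$ converges weakly to $\delta$. Together with uniform integrability, which the finite-moment assumption supplies, this gives $m_t\to 0$.
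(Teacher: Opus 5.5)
Your proposal is correct and follows the paper's route exactly: the corollary is Theorem~\ref{thm:linear-guidance} with $\bar m^{(\mathrm{in})}=0$ substituted, and the Dirac mass trivially has a finite first moment. Your extra $t\downarrow 0$ discussion is not needed, because the theorem already assumes $p_0=\pin$ and uses $m_0=\bar m^{(\mathrm{in})}$ as a boundary condition. If you did want that level of care, note two things: the score $\nabla_x\log\psi_t$ is regular at $t=0$ and becomes singular only as $t\to 1$, and finite first moments at each $t$ do not by themselves give the uniform integrability your fallback argument invokes.
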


\begin{remark}[Relation to the Brownian and Schr\"{o}dinger bridge]
\label{rem:schrodinger}
Setting $V_t \doteq  0$ (i.e.\ $\beta_t = 0$) in the H-PID framework removes the interaction potential entirely, recovering the standard \emph{Schr\"{o}dinger bridge} \cite{leonard_survey_2013}: the problem of finding the most likely path of a Brownian motion that transports $\pin$ to $\ptar$. When both marginals are delta distributions, this further specialises to the classical \emph{Brownian bridge} (a Brownian motion pinned at both endpoints). In both cases the mean trajectory $m_t = \mathbb{E}[x_t]$ is the linear interpolant $(1{-}t)\bar m^{(\mathrm{in})} + t\bar m^{(\mathrm{tar})}$: for the Brownian bridge this is immediate from the explicit formula $\mathbb{E}[x_t] = (1{-}t)x_0 + t x_1$; for the general Schr\"{o}dinger bridge it follows from the affine structure of the Doob $h$-transform (see \cite{leonard_survey_2013}, Remark~1.8).

Theorem~\ref{thm:linear-guidance} is a strictly stronger statement. It establishes the \emph{same} linear interpolation for \emph{any} $\beta_t > 0$, however large or time-varying, and for arbitrary non-Gaussian, multi-modal marginals $\pin$ and $\ptar$. In the limit $\beta_t \to 0$ our result trivially reproduces the Schr\"{o}dinger bridge case (the mean acceleration equation $\ddot m_t = \beta_t(m_t - \nu_t^{(\MF)}) \to 0$ degenerately), but the proof mechanism is completely different: it does not rely on the $h$-transform or any special structure of the bridge kernel. Instead, it rests on the Itô--HJB cancellation (Step~3 of the proof), which is a structural identity for the \emph{interacting} system, and holds precisely because the quadratic potential makes $\nabla_x V^{(\mathrm{eff})}_t$ linear in $x$. The non-interacting Schr\"{o}dinger bridge therefore corresponds to the special limit of our theorem in which the interaction is switched off, not the other way around.

A related observation holds in the stochastic interpolant framework \cite{albergo_building_2023}, where the base interpolant $I_t=(1-t)x_0+t x_1+\sigma z$ has mean $(1{-}t)\mathbb{E}[x_0]+t\mathbb{E}[x_1]$ by construction; in MF-PID with $V_t>0$, the same linearity is not imposed but \emph{derived} from the Itô--HJB cancellation, and holds for any $\beta_t$ and any initial and final densities.
\end{remark}

The following corollary recovers the LQG result of \S\ref{SI:lqg:tcl} as a special case and identifies the precise role of the base drift.

\begin{corollary}[OU base drift]
\label{cor:OU}
For $f_t(x) = A x$ with $A = -\kappa I$ and $p_0=\delta(\cdot)$, the same It\^o--HJB cancellation in Steps~1--3 applies.

More generally, for a base drift $f_t$ (under sufficient regularity), Steps~1--3 yield the identity
\[
\ddot m_t
= \frac{d}{dt}\mathbb E[f_t(x_t)]
  + \mathbb E[\nabla \Veff_t(x_t)]
  - \mathbb E[(\nabla f_t(x_t))^\top u_t^*(x_t)].
\]
For linear $f_t(x)=A x$ (constant Jacobian $\nabla f_t = A$) and quadratic interaction, $\mathbb E[\nabla \Veff_t]=\beta_t(m_t-\nu_t^{(\MF)})$ vanishes at the MF fixed point. Using $\dot m_t = A m_t + \mathbb E[u_t^*(x_t)]$ (hence $\mathbb E[u_t^*(x_t)] = \dot m_t - A m_t$) gives
\[
\ddot m_t = (A-A^\top)\dot m_t + A^\top A\,m_t.
\]
Specializing to $A=-\kappa I$ yields $\ddot m_t=\kappa^2 m_t$ and hence, with $m_0=0$ and $m_1=\bar m^{(\mathrm{tar})}$,
\begin{equation}
  \nu_t^{(\MF)} = m_t^{(\MF)}= \bar m^{(\mathrm{tar})}\,\frac{\sinh(\kappa t)}{\sinh(\kappa)}.
  \label{SI:eq:nu-OU}
\end{equation}
This departs from the linear interpolant by $O(\kappa^2)$; Eq.~\eqref{SI:eq:mt-mf} is recovered.
\end{corollary}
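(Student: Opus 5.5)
The plan is to redo Steps~1--3 of the proof of Theorem~\ref{thm:linear-guidance} with the base drift kept in the HJB equation~\eqref{SI:eq:hjb-mf}. Along the way I keep track of the extra terms $f_t$ generates. Then I specialize to linear $f_t$ and to the quadratic interaction. Throughout, $\nabla f_t$ denotes the Jacobian with $(\nabla f_t)_{ij}=\partial_j f_{t,i}$. I assume enough regularity ($\psi_t>0$ and $C^{1,3}$, with the relevant moments finite) to differentiate under $\mathbb{E}$ and to drop the Itô martingale terms in expectation; this is the same standing assumption as in the Theorem.

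\emph{Step 1 (differentiated HJB with drift).} Write $u_t^*=-\nabla_x J_t$ and take the spatial gradient of~\eqref{SI:eq:hjb-mf}. The new ingredient is $\nabla_x(f_t\cdot\nabla_x J_t)=-\nabla_x(f_t\cdot u_t^*)$. Because $\nabla_x u_t^*=\mathrm{Hess}(\log\psi_t)$ is symmetric, this splits as $(\nabla f_t)^\top u_t^*+(f_t\cdot\nabla_x)u_t^*$. The result is
\[
\partial_t u_t^*=\nabla_x\Veff_t-(\nabla f_t)^\top u_t^*-(f_t\cdot\nabla_x)u_t^*-(u_t^*\cdot\nabla_x)u_t^*-\tfrac12\Delta u_t^*.
\]

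\emph{Step 2 (Itô and cancellation).} Apply Itô's formula along $\dd x_t=(f_t+u_t^*)\dd t+\dd W_t$. This gives $\frac{d}{dt}\mathbb{E}[u_t^*(x_t)]=\mathbb{E}[\partial_t u_t^*+((f_t+u_t^*)\cdot\nabla_x)u_t^*+\tfrac12\Delta u_t^*]$. Substituting Step~1, the transport terms in both $f_t$ and $u_t^*$ cancel, as does the Laplacian term. What remains is $\frac{d}{dt}\mathbb{E}[u_t^*]=\mathbb{E}[\nabla\Veff_t]-\mathbb{E}[(\nabla f_t)^\top u_t^*]$. Since $\dot m_t=\mathbb{E}[f_t(x_t)]+\mathbb{E}[u_t^*(x_t)]$, differentiating once more yields the stated general identity for $\ddot m_t$.

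\emph{Step 3 (linear drift, quadratic interaction).} By~\eqref{SI:eq:Veff-quad}, which holds for any $f_t$, we have $\nabla\Veff_t(x)=\beta_t(x-\nu_t^{(\MF)})$. Its expectation therefore vanishes at the fixed point~\eqref{SI:eq:self-consistency}. For $f_t(x)=Ax$ the remaining terms are $\frac{d}{dt}\mathbb{E}[Ax_t]=A\dot m_t$ and $-A^\top\mathbb{E}[u_t^*]=-A^\top(\dot m_t-Am_t)$. Adding them gives $\ddot m_t=(A-A^\top)\dot m_t+A^\top A\,m_t$. For $A=-\kappa I$ the antisymmetric part drops out and $\ddot m_t=\kappa^2 m_t$. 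With $m_0=0$ (from $p_0=\delta$) and $m_1=\bar m^{(\mathrm{tar})}$ (from the hard constraint), the unique solution is the $\sinh$ arc~\eqref{SI:eq:nu-OU}, and $\nu_t^{(\MF)}=m_t$ gives the claim. Expanding $\sinh(\kappa t)/\sinh\kappa=t+\tfrac{\kappa^2}{6}(t^3-t)+O(\kappa^4)$ shows the $O(\kappa^2)$ departure from the linear interpolant. Agreement with~\eqref{SI:eq:mt-mf} is then immediate.

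The main obstacle is Step~1. There, the symmetry of $\nabla_x u_t^*$ is what turns $(\nabla u_t^*)^\top f_t$ into the transport term $(f_t\cdot\nabla)u_t^*$ that cancels against Itô. One must also get the transpose right in $(\nabla f_t)^\top u_t^*$, since it is this transpose that produces the $A-A^\top$ and $A^\top A$ structure. The rigorous interchange of derivative and expectation is the other delicate point, and I would handle it by the same regularity hypothesis as in the Theorem.
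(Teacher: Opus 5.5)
Your proposal is correct and takes the same route as the paper. You redo Steps~1--3 with the drift kept in the HJB, and you use the symmetry of $\nabla_x u_t^*$ to turn $(\nabla_x u_t^*)^\top f_t$ into the transport term that cancels against the It\^o generator. This gives the general identity for $\ddot m_t$, which you then specialize to $f_t(x)=Ax$ with $\mathbb{E}[\nabla\Veff_t]=0$ at the fixed point. Your explicit differentiated HJB and the expansion $\sinh(\kappa t)/\sinh\kappa = t+\tfrac{\kappa^2}{6}(t^3-t)+O(\kappa^4)$ only fill in details the paper leaves implicit.
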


\begin{remark}[Practical implication] Theorem~\ref{thm:linear-guidance} provides the MF guidance \emph{in closed form}, without iteration: for $f_t\doteq 0$, one sets $\nu_t^{(\MF)} = (1-t)\,\bar m^{(\mathrm{in})} + t\,\bar m^{(\mathrm{tar})}$ and proceeds directly to the score function computation of
\S\ref{SI:quad:score}.
The self-consistency fixed-point iteration is therefore unnecessary in this case and is included only for reference and for the $f_t \not\doteq 0$ setting.
\end{remark}

\subsection{Green-function solution: PWC protocol}
\label{SI:quad:pwc}

Following \cite{chertkov_adaptive_2025,chertkov_generative_2025}, we discretise $[0,1]$ by
$0=t_0<t_1<\cdots<t_M=1$ and represent the protocol by piecewise-constant
(PWC) values $(\beta_i,\nu_i)$ on each interval $[t_i,t_{i+1})$.
By Theorem~\ref{thm:linear-guidance}, for $f_t\doteq  0$:
\begin{equation}
  \nu_i = (1-t_i)\,\bar m^{(\mathrm{in})} + t_i\,\bar m^{(\mathrm{tar})},
  \quad i=0,\ldots,M.
  \label{SI:eq:nu-pwc-explicit}
\end{equation}
The guided Green functions take the Gaussian form
\begin{align}
  G_t^{(-)}(x|y)
  &\propto\exp\!\Bigl(
    -\tfrac{a_t^{(-)}}{2}\|x-\nu_t\|^2
    + b_t^{(-)}(x-\nu_t)^\top(y-\nu_t) \nonumber\\
  &\hspace{2cm}
    - \tfrac{c_t^{(-)}}{2}\|y-\nu_t\|^2
    + (r_t^{(-)})^\top(x-\nu_t)
    + (s_t^{(-)})^\top(y-\nu_t)\Bigr),
  \label{SI:eq:G-minus}\\
  G_t^{(+)}(y|0)
  &\propto\exp\!\Bigl(
    -\tfrac{a_t^{(+)}}{2}\|y-\nu_t\|^2
    + (s_t^{(+)})^\top(y-\nu_t)\Bigr).
  \label{SI:eq:G-plus}
\end{align}
The scalar coefficients satisfy the Riccati equations
\begin{equation}
  \mp\dot a_t^{(\pm)} + \beta_t = (a_t^{(\pm)})^2,
  \qquad
  \dot b_t^{(-)} = a_t^{(-)}b_t^{(-)},
  \qquad
  \dot c_t^{(-)} = (b_t^{(-)})^2.
  \label{SI:eq:riccati-abc}
\end{equation}
Within each PWC interval these admit closed hyperbolic forms
(\S\ref{SI:quad:closedforms}).

\subsection{Score function in closed form}
\label{SI:quad:score}

\begin{proposition}[Gaussian-mixture score]
\label{prop:gm-score}
For a Gaussian-mixture target
$$\ptar = \sum_{k=1}^K \pi_k\calN(x;\,m_k,\Sigma_k),$$
and guidance $\nu_t$ (given by \eqref{SI:eq:nu-pwc-explicit}
for $f_t\doteq 0$),
substituting \eqref{SI:eq:G-minus}--\eqref{SI:eq:G-plus} into the SOT
formula \eqref{SI:eq:u-star-SOT} yields the score function
\begin{equation}
  u_t^*(x) = b_t^{(-)}\bigl(\hat y(t;x) - \Upsilon_t(x)\bigr),
  \label{SI:eq:score-gm}
\end{equation}
where
\begin{align}
  \Upsilon_t(x)
  &= \nu_t + \frac{a_t^{(-)}(x-\nu_t) - r_t^{(-)}}{b_t^{(-)}},
  \label{SI:eq:Upsilon}\\
  \hat y(t;x)
  &= \sum_{k=1}^K \bar\pi_k(t;x)\,\bar m_k(t;x),
  \label{SI:eq:yhat}\\
  \bar m_k(t;x)
  &= \bigl(\Sigma_k^{-1} + K_t I\bigr)^{-1}
     \bigl(\Sigma_k^{-1}m_k + K_t\,\mu_t(x)\bigr),
  \label{SI:eq:mk-bar}\\
  \bar\pi_k(t;x)
  &= \frac{\pi_k\,\calN(\mu_t(x);\,m_k,\,\Sigma_k + K_t^{-1}I)}
          {\sum_\ell\pi_\ell\,\calN(\mu_t(x);\,m_\ell,\,\Sigma_\ell + K_t^{-1}I)},
  \label{SI:eq:pik-bar}
\end{align}
with probe distribution parameters
\begin{equation}
  K_t = c_t^{(-)} - a_1^{(+)},
  \qquad
  \mu_t(x)
  = \frac{b_t^{(-)}}{K_t}(x-\nu_t)
    + \frac{s_t^{(-)} - s_1^{(+)}}{K_t} + \nu_t.
  \label{SI:eq:Kt-mut}
\end{equation}
\end{proposition}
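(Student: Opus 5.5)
The plan is a direct computation. Substitute the Gaussian Green functions \eqref{SI:eq:G-minus}--\eqref{SI:eq:G-plus} into the SOT formula \eqref{SI:eq:u-star-SOT} (in the guided form \eqref{SI:eq:u-star-quad} of Proposition~\ref{prop:mf-hpid}). Then evaluate the $y$-integral against each mixture component in closed form by completing the square, and finally take $\nabla_x\log$. Write $u_t^*(x)=\nabla_x\log Z_t(x)$ with $Z_t(x)=\int\ptar(y)\,G_t^{(-)}(x|y)/G_1^{(+)}(y|0)\,\dd y$.

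\emph{Step 1 (collect the exponent).} The ratio $G_t^{(-)}/G_1^{(+)}$ is the exponential of a quadratic form. First factor out the purely $x$-dependent part, $\exp\bigl(-\tfrac{a_t^{(-)}}{2}\|x-\nu_t\|^2+(r_t^{(-)})^\top(x-\nu_t)\bigr)$. In the remaining $y$-dependent part, the quadratic coefficient is $-\tfrac12(c_t^{(-)}-a_1^{(+)})\|y-\nu_t\|^2=-\tfrac{K_t}{2}\|y-\nu_t\|^2$. The linear coefficient is $\bigl(b_t^{(-)}(x-\nu_t)+s_t^{(-)}-s_1^{(+)}\bigr)^\top(y-\nu_t)$.

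Here one point needs care. $G_1^{(+)}$ is naturally centred at $\nu_1$, not $\nu_t$. Re-centering $\|y-\nu_1\|^2$ at $\nu_t$ produces only an additional $x$-independent linear term in $y$ and a constant. I will absorb these into $s_1^{(+)}$ (the convention implicit in \eqref{SI:eq:Kt-mut}) and into the overall normalization, which drops out under $\nabla_x\log$.

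Completing the square then gives
\[
\exp\bigl(\cdots\bigr)=\exp\!\Bigl(\tfrac{K_t}{2}\|\mu_t(x)-\nu_t\|^2\Bigr)\,(2\pi/K_t)^{d/2}\,\calN\bigl(y;\mu_t(x),K_t^{-1}I\bigr),
\]
with $\mu_t(x)$ exactly as in \eqref{SI:eq:Kt-mut}.

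\emph{Step 2 (Gaussian convolution).} Use the standard identity $\int\calN(y;m_k,\Sigma_k)\,\calN(y;\mu,K^{-1}I)\,\dd y=\calN(\mu;m_k,\Sigma_k+K^{-1}I)$. This gives
\[
Z_t(x)\propto \exp\!\Bigl(-\tfrac{a_t^{(-)}}{2}\|x-\nu_t\|^2+(r_t^{(-)})^\top(x-\nu_t)+\tfrac{K_t}{2}\|\mu_t-\nu_t\|^2\Bigr)\sum_k\pi_k\,\calN\bigl(\mu_t;m_k,\Sigma_k+K_t^{-1}I\bigr).
\]

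\emph{Step 3 (differentiate).} Since $\nabla_x\mu_t=(b_t^{(-)}/K_t)I$, the chain rule gives
\[
u_t^*=-a_t^{(-)}(x-\nu_t)+r_t^{(-)}+b_t^{(-)}(\mu_t-\nu_t)-\tfrac{b_t^{(-)}}{K_t}\sum_k\bar\pi_k\,(\Sigma_k+K_t^{-1}I)^{-1}(\mu_t-m_k),
\]
where the weights $\bar\pi_k$ are those of \eqref{SI:eq:pik-bar}. To reach the stated form, use the algebraic identity
\[
\mu-\tfrac1K(\Sigma_k+K^{-1}I)^{-1}(\mu-m_k)=\bar m_k .
\]
It follows from $\mu-\bar m_k=(\Sigma_k^{-1}+KI)^{-1}\Sigma_k^{-1}(\mu-m_k)=K^{-1}(\Sigma_k+K^{-1}I)^{-1}(\mu-m_k)$. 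Because $\sum_k\bar\pi_k=1$, the terms $b_t^{(-)}\mu_t$ recombine into $b_t^{(-)}\hat y$. What remains is $u_t^*=b_t^{(-)}\hat y-b_t^{(-)}\nu_t-a_t^{(-)}(x-\nu_t)+r_t^{(-)}=b_t^{(-)}(\hat y-\Upsilon_t)$, which is \eqref{SI:eq:score-gm} with $\Upsilon_t$ as in \eqref{SI:eq:Upsilon}.

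The main obstacle is not the algebra but justifying Step~1. It requires $K_t=c_t^{(-)}-a_1^{(+)}>0$, so that the $y$-integral converges, together with the bookkeeping of centring at $\nu_t$ versus $\nu_1$ for the PWC guidance. For positivity, I would use the Riccati equations \eqref{SI:eq:riccati-abc}: $c_t^{(-)}$ is increasing in $t$ because $\dot c^{(-)}=(b^{(-)})^2$, and it blows up as $t\to1$ where $G^{(-)}$ becomes a delta. The plan is to argue $K_t>0$ for $t<1$ from this monotonicity, or else simply assume it as part of well-posedness of the bridge.
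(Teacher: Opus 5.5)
Your proposal is correct and is the same direct substitution-and-completing-the-square computation the paper intends (the paper offers nothing beyond ``substituting \eqref{SI:eq:G-minus}--\eqref{SI:eq:G-plus} into \eqref{SI:eq:u-star-SOT}''), and your Steps 1--3 all check out, including the identity $\mu-K^{-1}(\Sigma_k+K^{-1}I)^{-1}(\mu-m_k)=\bar m_k$. Two small sharpenings: first, the recentring term $a_1^{(+)}(\nu_t-\nu_1)^\top(y-\nu_t)$ depends on $t$, so the $s_1^{(+)}$ in \eqref{SI:eq:Kt-mut} must be read as $s_1^{(+)}-a_1^{(+)}(\nu_t-\nu_1)$, equivalently $\mu_t=\bigl(b_t^{(-)}x+\theta^{(-)}_{y,t}-\theta^{(+)}_1\bigr)/K_t$ as in Proposition~\ref{prop:marginal}; second, monotonicity of $c_t^{(-)}$ alone cannot fix the sign of $K_t$, because you also need the anchor $c_0^{(-)}=a_1^{(+)}$ (from $G_1^{(+)}(y;0)=G_0^{(-)}(0;y)$), which gives $K_t>0$ exactly on $(0,1)$ but $K_0=0$, so $t=0$ must be handled by continuity rather than through the probe Gaussian.
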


\subsection{Closed-form PWC evolution within each interval}
\label{SI:quad:closedforms}

Fix interval $i$ with $\beta_t=\beta_i$, $\nu_t=\nu_i$, and set
$\omega_i=\sqrt{\beta_i}$, $\tau=t-t_i$.

\paragraph{Scalar coefficients.}
\textit{Forward branch} ($a^{(+)}$, propagated forward from
$t=0^+$ where $a^{(+)}\sim 1/t$):
\begin{equation}
  a^{(+)}(t) = \omega_i\coth(\omega_i\tau + \varphi_i^{(+)}),
  \label{SI:eq:a-plus-pwc}
\end{equation}
with $\varphi_1^{(+)}=0$ and subsequent phases set by continuity.

\textit{Backward branch} ($a^{(-)}$, propagated backward from
$t=1^-$ where $a^{(-)}\sim 1/(1-t)$).
On the terminal interval:
\begin{equation}
  a^{(-)}(t) = c^{(-)}(t) = \omega_{M-1}\coth(\omega_{M-1}(1-t)),
  \quad
  b^{(-)}(t) = \omega_{M-1}\,\mathrm{csch}(\omega_{M-1}(1-t)).
  \label{SI:eq:abc-terminal}
\end{equation}
On earlier interval $i$, with right-endpoint anchors
$(a^{(-)}_{i+1},b^{(-)}_{i+1},c^{(-)}_{i+1})$ and $\tau=t_{i+1}-t$:
\begin{align}
  a^{(-)}(t)
  &= \omega_i\,
     \frac{a^{(-)}_{i+1} + \omega_i\tanh(\omega_i\tau)}
          {\omega_i + a^{(-)}_{i+1}\tanh(\omega_i\tau)},
  \label{SI:eq:a-minus-pwc}\\
  b^{(-)}(t)
  &= b^{(-)}_{i+1}
     \sqrt{\frac{\beta_i-(a^{(-)}(t))^2}
                {\beta_i-(a^{(-)}_{i+1})^2}},
  \label{SI:eq:b-minus-pwc}\\
  c^{(-)}(t)
  &= c^{(-)}_{i+1}
     + \frac{(b^{(-)}_{i+1})^2}{\beta_i-(a^{(-)}_{i+1})^2}
       \bigl(a^{(-)}_{i+1}-a^{(-)}(t)\bigr).
  \label{SI:eq:c-minus-pwc}
\end{align}

\paragraph{Vector (linear) coefficients.}
Introduce the re-centred linear coefficients
\begin{equation}
  \theta_t^{(+)} = s_t^{(+)} + a_t^{(+)}\nu_t,
  \quad
  \theta_{x,t}^{(-)} = r_t^{(-)} + (a_t^{(-)}-b_t^{(-)})\nu_t,
  \quad
  \theta_{y,t}^{(-)} = s_t^{(-)} + (c_t^{(-)}-b_t^{(-)})\nu_t.
  \label{SI:eq:theta-def}
\end{equation}
The ODEs for these quantities are linear with hyperbolic driving terms
from \eqref{SI:eq:a-plus-pwc}--\eqref{SI:eq:c-minus-pwc}.
Their closed-form solutions on each interval are:
\begin{align}
  \theta^{(+)}(t)
  &= \frac{\sinh\varphi_i^{(+)}}
          {\sinh(\omega_i\tau+\varphi_i^{(+)})}\,
     \theta^{(+)}(t_i)
     + \frac{\beta_i}{\omega_i}\,
       \frac{\cosh(\omega_i\tau+\varphi_i^{(+)})-\cosh\varphi_i^{(+)}}
            {\sinh(\omega_i\tau+\varphi_i^{(+)})}\,\nu_i,
  \label{SI:eq:theta-plus-pwc}\\
  \theta_x^{(-)}(t)
  &= \frac{b^{(-)}(t)}{b^{(-)}_{i+1}}\,\theta_x^{(-)}(t_{i+1}^-)
     + \Bigl(a^{(-)}(t)
       - \frac{b^{(-)}(t)}{b^{(-)}_{i+1}}a^{(-)}_{i+1}\Bigr)\nu_i,
  \label{SI:eq:theta-x-pwc}\\
  \theta_y^{(-)}(t)
  &= \theta_y^{(-)}(t_{i+1}^-)
     + \frac{c^{(-)}_{i+1}-c^{(-)}(t)}{b^{(-)}_{i+1}}\,
       \theta_x^{(-)}(t_{i+1}^-)
     + \Bigl((b^{(-)}_{i+1}-b^{(-)}(t))
       - a^{(-)}_{i+1}
         \frac{c^{(-)}_{i+1}-c^{(-)}(t)}{b^{(-)}_{i+1}}\Bigr)\nu_i.
  \label{SI:eq:theta-y-pwc}
\end{align}

\subsection{Time-marginal density in closed form}
\label{SI:quad:density}

\begin{proposition}[Optimal marginal density]
\label{prop:marginal}
Define the derived time-continuous quantities
\begin{equation}
  K_t = c_t^{(-)} - a_1^{(+)},
  \quad
  \alpha_t = b_t^{(-)}/K_t,
  \quad
  \bar d_t = (\theta_{y,t}^{(-)} - \theta_1^{(+)})/K_t,
  \quad
  S_k(t) = \Sigma_k + K_t^{-1}I,
  \label{SI:eq:derived-Kt}
\end{equation}
and
\begin{align}
  M_k(t)
  &= \bigl(a_t^{(+)}+a_t^{(-)}-({b_t^{(-)}}^2/K_t)\bigr)I
     + \alpha_t^2 S_k(t)^{-1},
  \label{SI:eq:Mk}\\
  h_k(t)
  &= \theta_t^{(+)} + \theta_{x,t}^{(-)}
     + b_t^{(-)}\bar d_t
     + \alpha_t S_k(t)^{-1}(m_k-\bar d_t).
  \label{SI:eq:hk}
\end{align}
Then the optimal marginal density is the Gaussian mixture
\begin{equation}
  p_t^{*}(x) \propto
  \sum_{k=1}^K \pi_k |S_k(t)|^{-1/2}
  \exp\!\Bigl(
    -\tfrac{1}{2}x^\top M_k(t)x + h_k(t)^\top x
    - \tfrac{1}{2}(m_k-\bar d_t)^\top S_k(t)^{-1}(m_k-\bar d_t)
  \Bigr).
  \label{SI:eq:p-star-gm}
\end{equation}
\end{proposition}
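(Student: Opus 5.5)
The plan is a direct Gaussian computation starting from the SOT representation. By \eqref{SI:eq:u-star-SOT} and the remark following it, the optimal marginal is
\[
p_t^*(x)\propto G_t^{(+)}(x;0)\,\psi_t(x),\qquad
\psi_t(x)=\int \ptar(y)\,\frac{G_t^{(-)}(x;y)}{G_1^{(+)}(y;0)}\,\dd y .
\]
I will substitute the Gaussian forms \eqref{SI:eq:G-minus}--\eqref{SI:eq:G-plus} into this formula and carry out the $y$-integral one mixture component at a time. Whenever the proportionality constant depends on neither $x$ nor $k$, I drop it.

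\textbf{Step 1 (re-centring).} Expand every $(\cdot-\nu_t)$ in \eqref{SI:eq:G-minus}--\eqref{SI:eq:G-plus}. Using the re-centred coefficients \eqref{SI:eq:theta-def}, this gives, up to $x,y$-independent factors,
\[
G_t^{(+)}(x;0)\propto e^{-\frac{a_t^{(+)}}{2}\|x\|^2+\theta_t^{(+)\top}x},\qquad
G_t^{(-)}(x;y)\propto e^{-\frac{a_t^{(-)}}{2}\|x\|^2+b_t^{(-)}x^\top y-\frac{c_t^{(-)}}{2}\|y\|^2+\theta_{x,t}^{(-)\top}x+\theta_{y,t}^{(-)\top}y}.
\]
The step consists of checking that the $\nu_t$-dependent cross terms collapse exactly into $\theta^{(+)}$, $\theta_x^{(-)}$ and $\theta_y^{(-)}$. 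This is pure bookkeeping.

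\textbf{Step 2 (the $y$-exponent).} Dividing by $G_1^{(+)}(y;0)$ contributes $+\tfrac{a_1^{(+)}}{2}\|y\|^2-\theta_1^{(+)\top}y$. The $y$-dependent part of the integrand is therefore
\[
-\tfrac{K_t}{2}\|y\|^2+\bigl(b_t^{(-)}x+\theta_{y,t}^{(-)}-\theta_1^{(+)}\bigr)^\top y ,
\]
with $K_t$ as in \eqref{SI:eq:derived-Kt}. The linear coefficient equals $K_t(\alpha_t x+\bar d_t)$. Completing the square turns this exponent into the product of two factors:
\begin{itemize}
\item an isotropic Gaussian kernel in $y$ of precision $K_t$, centred at $\alpha_t x+\bar d_t$;
\item the factor $\exp\bigl(\tfrac{K_t}{2}\|\alpha_t x+\bar d_t\|^2\bigr)=\exp\bigl(\tfrac{(b_t^{(-)})^2}{2K_t}\|x\|^2+b_t^{(-)}\bar d_t^\top x+\text{const}\bigr)$.
\end{itemize}

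\textbf{Step 3 (convolution with each component).} For component $k$, the integral $\int \calN(y;m_k,\Sigma_k)\,e^{-\frac{K_t}{2}\|y-z\|^2}\dd y$ equals $(2\pi/K_t)^{d/2}\calN(z;m_k,S_k(t))$, by the standard Gaussian convolution identity, with $z=\alpha_t x+\bar d_t$ and $S_k(t)$ as in \eqref{SI:eq:derived-Kt}. The $K_t$ prefactor is common to all components and is dropped. The remaining factor $|S_k|^{-1/2}$ survives because it depends on $k$. Expanding $-\tfrac12(\alpha_t x+\bar d_t-m_k)^\top S_k^{-1}(\alpha_t x+\bar d_t-m_k)$ produces three pieces:
\begin{itemize}
\item the quadratic term $\alpha_t^2 S_k^{-1}$;
\item the linear term $\alpha_t S_k^{-1}(m_k-\bar d_t)$;
\item the constant $-\tfrac12(m_k-\bar d_t)^\top S_k^{-1}(m_k-\bar d_t)$.
\end{itemize}

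\textbf{Step 4 (assembly).} Multiply by $G_t^{(+)}(x;0)$ and by the $x$-part of $G_t^{(-)}$. Collecting terms, the precision is $a_t^{(+)}+a_t^{(-)}-(b_t^{(-)})^2/K_t$ plus $\alpha_t^2S_k^{-1}$, which is $M_k(t)$ in \eqref{SI:eq:Mk}. The linear coefficient is $\theta_t^{(+)}+\theta_{x,t}^{(-)}+b_t^{(-)}\bar d_t+\alpha_tS_k^{-1}(m_k-\bar d_t)$, which is $h_k(t)$ in \eqref{SI:eq:hk}. Summing over $k$ with weights $\pi_k$ then gives \eqref{SI:eq:p-star-gm}.

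\textbf{Main obstacle.} The algebra is routine. The delicate point is justifying the manipulations, which needs three facts:
\begin{itemize}
\item $K_t=c_t^{(-)}-a_1^{(+)}>0$ for $t<1$, so that the $y$-integral converges;
\item $M_k(t)\succ0$, so that $p_t^*$ is normalisable;
\item the $t\to1^-$ limit, where $b_t^{(-)}$ and $c_t^{(-)}$ blow up and $p_t^*$ should reduce to $\ptar$.
\end{itemize}
For positivity of $K_t$ I would first use the Riccati relations \eqref{SI:eq:riccati-abc}. Since $\dot c_t^{(-)}=(b_t^{(-)})^2\ge0$ and $c^{(-)}\sim 1/(1-t)$, positivity holds near $t=1$. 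For earlier $t$, I would argue from the closed forms \eqref{SI:eq:c-minus-pwc} and \eqref{SI:eq:a-plus-pwc}, or more simply note that $p_t^*$ exists and is a product of positive integrable factors, which forces the Gaussian integral to be proper.
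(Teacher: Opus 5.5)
Your proposal is correct and is essentially the computation behind the proposition. The paper states it without a separate proof, but it rests on the same three steps: substituting the Gaussian Green functions into $p_t^*\propto G_t^{(+)}(x;0)\,\psi_t(x)$, completing the square in $y$, and applying the Gaussian convolution identity. Your bookkeeping reproduces $M_k(t)$, $h_k(t)$ and the $k$-dependent prefactors term by term.

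Your regularity remarks can be tightened. Monotonicity of $c_t^{(-)}$ gives positivity of $K_t$ only near $t=1$, and convergence of the $y$-integral by itself does not force $K_t>0$. The clean tool is the Chapman--Kolmogorov identity $G_1^{(+)}(y;0)=\int G_t^{(-)}(x;y)\,G_t^{(+)}(x;0)\,\dd x$. It gives $c_0^{(-)}=a_1^{(+)}$, hence $K_t=\int_0^t (b_s^{(-)})^2\,\dd s>0$ for $t>0$. It also gives $K_t=(b_t^{(-)})^2/(a_t^{(+)}+a_t^{(-)})$, so the first bracket in $M_k(t)$ vanishes identically and $M_k(t)=\alpha_t^2S_k(t)^{-1}\succ0$.
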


\subsection{Non-zero initial condition and mixture-to-mixture transport}
\label{SI:quad:nonzero-IC}

\subsubsection{Deterministic start $x_0=z$}
A coordinate shift $\tilde x = x-z$ reduces the problem to a zero-start
problem with shifted guidance $\tilde\nu_t = \nu_t - z$ and shifted
target means $\tilde m_k = m_k - z$.
The scalar Riccati coefficients
$(a_t^{(\pm)}, b_t^{(-)}, c_t^{(-)})$ are unchanged; the linear
coefficients acquire $z$-linear corrections via three scalar shift
propagators $\lambda^{(+)}(t)$, $\lambda_x^{(-)}(t)$,
$\lambda_y^{(-)}(t)$ satisfying the same ODEs as
$(\theta^{(+)},\theta_x^{(-)},\theta_y^{(-)})$
but with the source $\beta_t\nu_t$ replaced by $\beta_t$:
\begin{equation}
  \tilde\theta_t^{(+)} = \theta_t^{(+)} - \lambda^{(+)}(t)\,z,
  \quad
  \tilde\theta_{x,t}^{(-)} = \theta_{x,t}^{(-)} + \lambda_x^{(-)}(t)\,z,
  \quad
  \tilde\theta_{y,t}^{(-)} = \theta_{y,t}^{(-)} + \lambda_y^{(-)}(t)\,z.
  \label{SI:eq:theta-shifted}
\end{equation}
The shift propagators share the same PWC closed forms as
\eqref{SI:eq:theta-plus-pwc}--\eqref{SI:eq:theta-y-pwc}
with $\nu_i$ replaced by~$1$.

\subsubsection{Stochastic initial condition $x_0\sim\pin$}
For a mixture initial condition
$\pin(x)=\sum_{j=1}^J\pi_j^{(\mathrm{in})}\calN(x;\,m_j^{(\mathrm{in})},\Sigma_j^{(\mathrm{in})})$,
sample $z^{(i)}\sim\pin$ per trajectory and apply the shift
transformation above.
By Theorem~\ref{thm:linear-guidance} with $f_t\doteq 0$, the global
mean evolves as
\begin{equation}
  m_t = (1-t)\,\bar m^{(\mathrm{in})} + t\,\bar m^{(\mathrm{tar})},
  \label{SI:eq:mt-gm-explicit}
\end{equation}
so the guidance is fully explicit.
The shifted scalar coefficients and propagators are computed once;
per-trajectory cost is $\mathcal{O}(d)$ additions.

The optimal marginal density is the Gaussian mixture with $J\times K$
components
\begin{equation}
  p_t^{*}(x) \propto
  \sum_{j=1}^J\sum_{k=1}^K \tilde\pi_{j,k}(t)\,
  \calN(x;\,\tilde m_{j,k}(t),\,S_k(t)),
  \label{SI:eq:p-star-mixture-to-mixture}
\end{equation}
where the weights $\tilde\pi_{j,k}$ and means $\tilde m_{j,k}$ are
given by the following Gaussian completing-the-square formulas.

\paragraph{Component weights and means.}
Define the effective precision
$\Lambda_j(t) = (\Sigma_j^{(\mathrm{in})})^{-1} + (a_t^{(+)}-a_1^{(+)})I$
and the linear vector
$\ell_j(t)
= (\Sigma_j^{(\mathrm{in})})^{-1}m_j^{(\mathrm{in})}
  + \theta_t^{(+)} - a_1^{(+)}m_k - \theta_1^{(+)}$.
Then:
\begin{align}
  \tilde m_{j,k}(t)
  &= \bigl(M_k(t)+\Lambda_j(t)\bigr)^{-1}
    \Bigl(h_k(t) + \Lambda_j(t)^{-1}
    \bigl((\Sigma_j^{(\mathrm{in})})^{-1}m_j^{(\mathrm{in})}
      + \theta_t^{(+)} - a_1^{(+)}m_k - \theta_1^{(+)}\bigr)\Bigr),
  \label{SI:eq:m-tilde}\\
  \tilde\pi_{j,k}(t)
  &\propto
    \pi_j^{(\mathrm{in})}\pi_k\,
    |\Lambda_j(t)|^{-1/2}|\Sigma_j^{(\mathrm{in})}|^{-1/2}
    |S_k(t)|^{-1/2}|\tilde M_{j,k}(t)|^{-1/2}
    \exp\!\Bigl(
      \tfrac{1}{2}\tilde h_{j,k}^\top\tilde M_{j,k}^{-1}\tilde h_{j,k}
      - \tilde q_{j,k}(t)
    \Bigr),
  \label{SI:eq:pi-tilde}
\end{align}
where
$\tilde M_{j,k}=M_k+\Lambda_j$,
$\tilde h_{j,k}
 = h_k + \Lambda_j^{-1}
   ((\Sigma_j^{(\mathrm{in})})^{-1}m_j^{(\mathrm{in})}
    +\theta_t^{(+)}-a_1^{(+)}m_k-\theta_1^{(+)})$,
and
$\tilde q_{j,k}(t)
 = \tfrac{1}{2}(m_k-\bar d_t)^\top S_k^{-1}(m_k-\bar d_t)
   + \tfrac{1}{2}(m_j^{(\mathrm{in})})^\top
     (\Sigma_j^{(\mathrm{in})})^{-1}m_j^{(\mathrm{in})}$.

\section{Demand-Response Application: Additional Diagnostics}
\label{SI:dr}

\subsection{Experimental parameters}
\label{SI:dr:params}

See Table \ref{table:num-parameters}.

\begin{table}[h!]
\centering
\caption{Numerical parameters for the demand-response simulations.
\label{table:num-parameters}}
\small
\begin{tabular}{ll}
\toprule
Parameter & Value \\
\midrule
Number of particles $B$ & 8{,}000 \\
Euler--Maruyama steps $n_\mathrm{steps}$ & 2{,}500 \\
PWC intervals $M$ & 8 \\
$\beta$-schedule $(\beta_0,\gamma)$ & $(12.0,\;0.65)$ \\
MF tolerance $\epsilon$ & $2\times10^{-4}$ \\
\midrule
Target mixture $(\pi,m^{(\mathrm{tar})},\sigma^{(\mathrm{tar})})$
  & $(0.60,0.40)$; $(0.0,1.5)$; $(0.20,0.30)$ \\
Scenario A $(\sigma^{(\mathrm{in})},m^{(\mathrm{in})})$
  & $(3.0,3.0)$; $(1.0,6.0)$ \\
Scenario B $(\sigma^{(\mathrm{in})},m^{(\mathrm{in})})$
  & $(0.5,0.7)$; $(1.5,5.5)$ \\
\bottomrule
\end{tabular}
\end{table}

Both scenarios use $f_t\doteq  0$.
By Theorem~\ref{thm:linear-guidance}, the MF guidance is therefore
the exact linear interpolant
$\nu_t^{(\MF)} = (1-t)\,\bar m^{(\mathrm{in})} + t\,\bar m^{(\mathrm{tar})}$
in both cases, with no iteration required.
The small deviations $\max|\nu^{(\MF)}-\nu_{\mathrm{lin}}|$
reported in the numerical diagnostics (0.078 for Scenario A,
0.030 for Scenario B; Fig.~\ref{fig:DR_AB_guidance}) are an artefact
of the PWC temporal discretisation: within each of the $M=8$ intervals
the guidance is held constant, so the piecewise-constant approximation
to a linear function is not exactly linear at the midpoints.
These residuals shrink as $M\to\infty$ and are negligible for all
energy comparisons.

\subsection{Scenario A: additional diagnostics}
\label{SI:dr:scenA}

Figures \ref{fig:DR_A_density}--\ref{fig:DR_A_percomp} show density
snapshots, trajectory ensembles, and per-component energy decomposition
for Scenario~A.

\begin{figure}[h!]
  \centering
  \includegraphics[width=\linewidth]{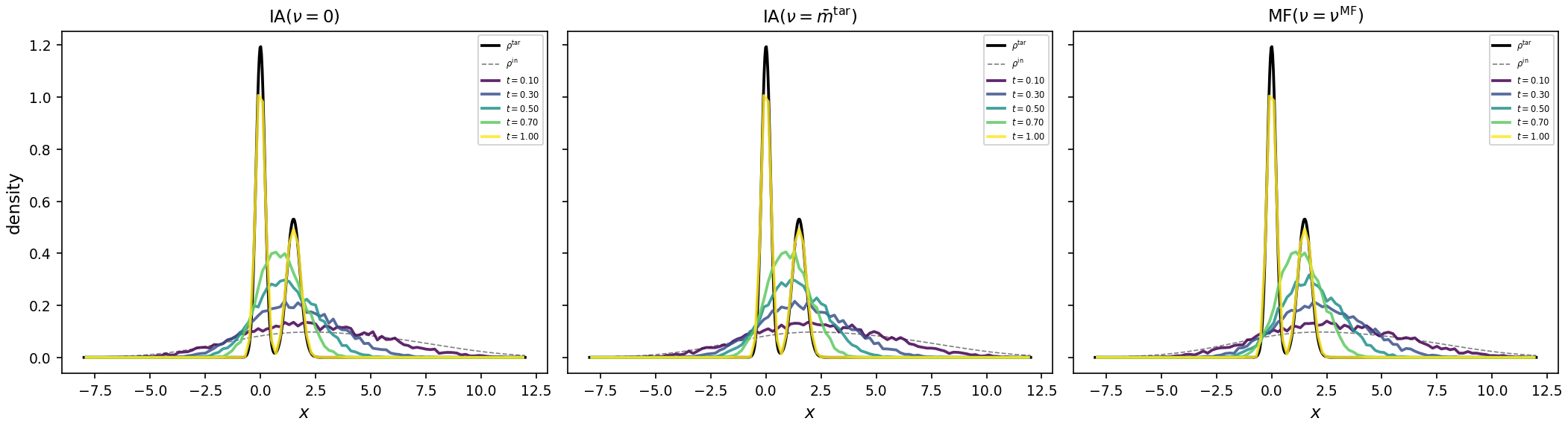}
  \caption{\textbf{Scenario~A}: density snapshots at
    $t\in\{0.10,0.30,0.50,0.70,1.0\}$ for the three methods.
    The initially broad, overlapping distribution contracts and splits
    into the two target modes.
    Black solid: target $\rho^{(\mathrm{tar})}$;
    black dashed: initial $\rho^{(\mathrm{in})}$.}
  \label{fig:DR_A_density}
\end{figure}

\begin{figure}[h!]
  \centering
  \includegraphics[width=\linewidth]{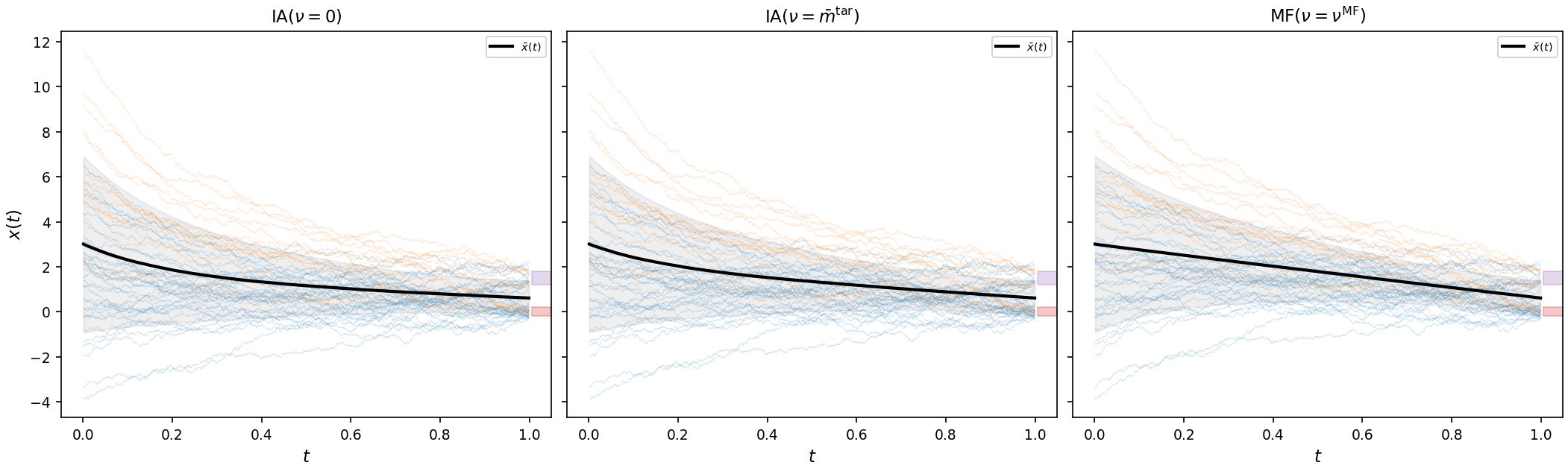}
  \caption{\textbf{Scenario~A}: trajectory ensembles
    (50 sample paths, colored by initial component).
    Blue: occupied; orange: unoccupied.
    Black curve: ensemble mean; gray band: $\pm 1\sigma$.
    Shaded rectangles at $t{=}1$:
    target $\pm\sigma_k^{(\mathrm{tar})}$ bands.}
  \label{fig:DR_A_traj}
\end{figure}

\begin{figure}[h!]
  \centering
  \includegraphics[width=0.8\linewidth]{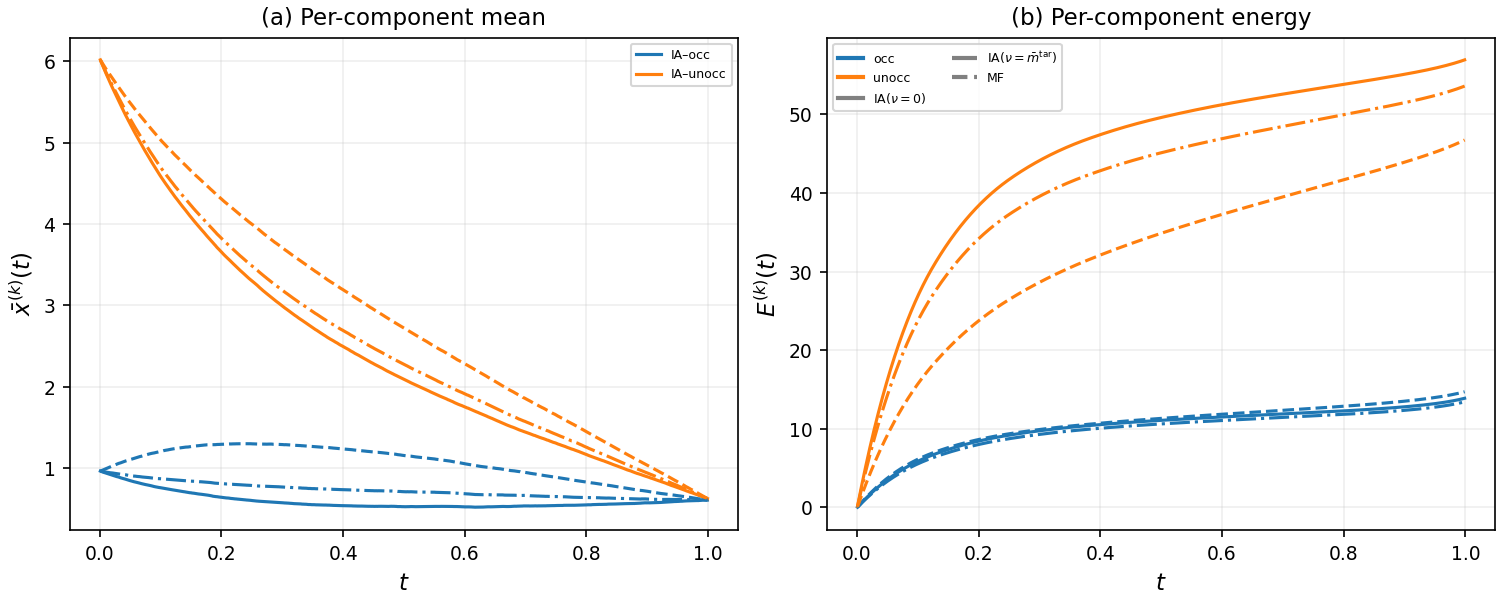}
  \caption{\textbf{Scenario~A}: per-component analysis.
    (a)~Per-component mean trajectories: the unoccupied mode (orange)
    must travel $\sim 4.5$ units while the occupied mode (blue)
    travels $\sim 1.0$.
    (b)~Cumulative per-component energy: the unoccupied mode absorbs
    most of the control cost; MF coordination reduces this asymmetry.}
  \label{fig:DR_A_percomp}
\end{figure}

\subsection{Scenario B: additional diagnostics}
\label{SI:dr:scenB}

Figures \ref{fig:DR_B_sixpanel}--\ref{fig:DR_B_traj} provide the
same diagnostics for Scenario~B.

\begin{figure}[h!]
  \centering
  \includegraphics[width=\linewidth]{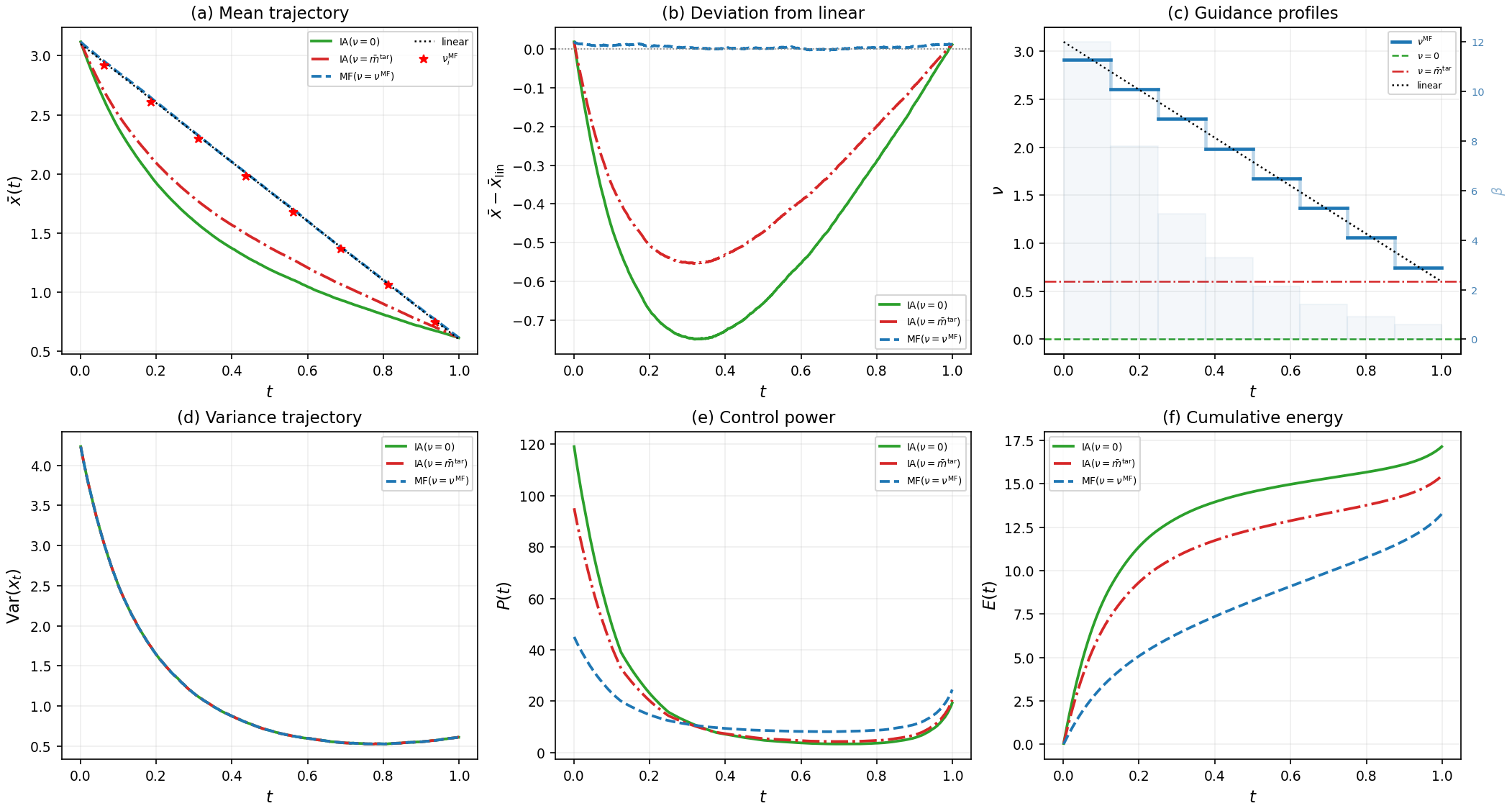}
  \caption{\textbf{Scenario~B} (narrow initial).
    The MF energy saving increases to 22.6\%.}
  \label{fig:DR_B_sixpanel}
\end{figure}

\begin{figure}[h!]
  \centering
  \includegraphics[width=\linewidth]{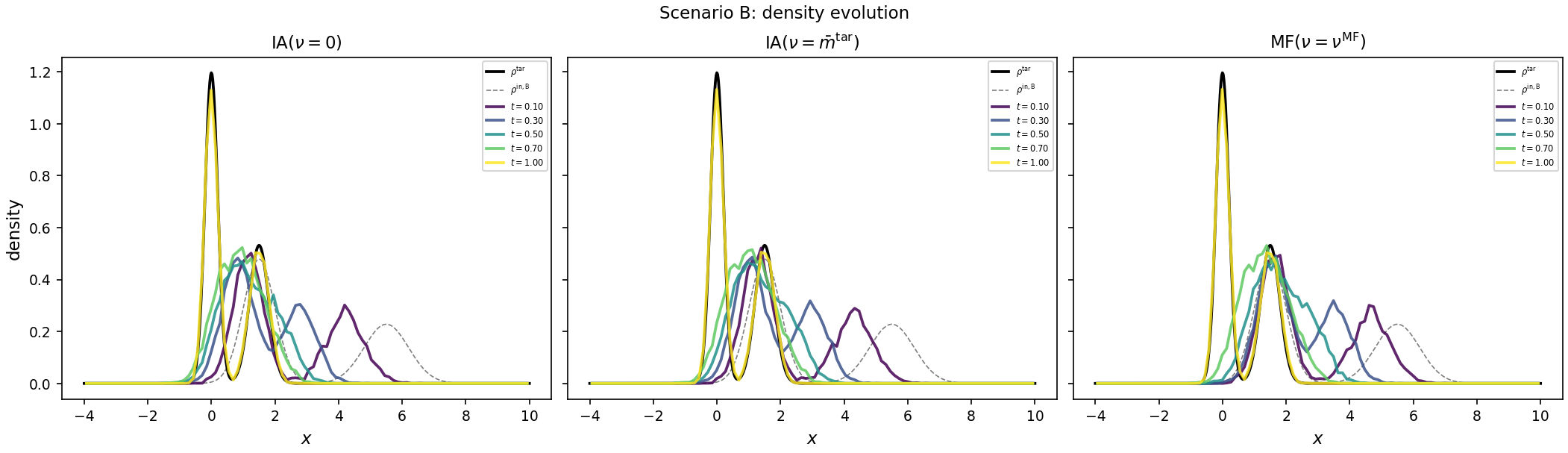}
  \caption{\textbf{Scenario~B}: density snapshots.
    Unlike Scenario~A, the initial law is already bimodal;
    the two peaks translate and sharpen simultaneously.}
  \label{fig:DR_B_density}
\end{figure}

\begin{figure}[h!]
  \centering
  \includegraphics[width=\linewidth]{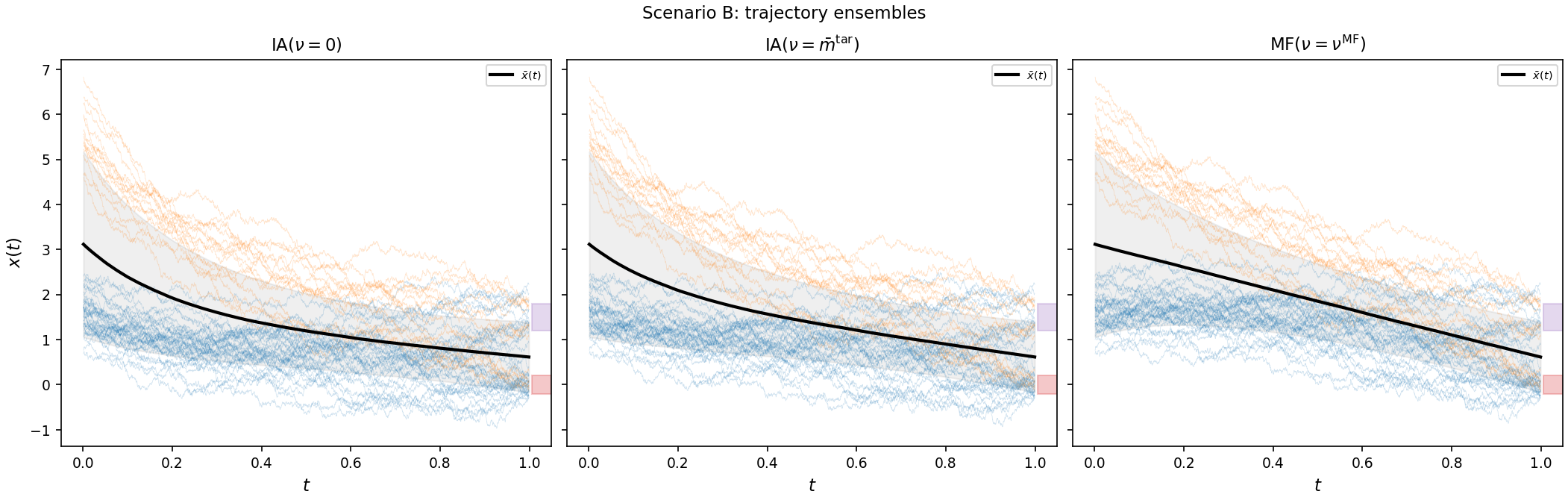}
  \caption{\textbf{Scenario~B}: trajectory ensembles
    (50 sample paths, colored by component).
    The two clusters remain well separated throughout the bridge.}
  \label{fig:DR_B_traj}
\end{figure}

\subsection{Score-field affinity}
\label{SI:dr:affinity}

The LQG benchmark (SI\,\S2) predicts an optimal drift affine in $x$:
$u^*(t,x) = -S(t)\,x - s(t)$.
For Gaussian-mixture targets the drift is no longer exactly affine;
we quantify the departure by fitting
$u^*\approx -S(t)\,x - s(t)$ via weighted least squares at each time
slice and reporting the coefficient of determination $R^2(t)$.

Figure~\ref{fig:DR_Ss_R2} overlays the spring constant $S(t)$,
shift $s(t)$, and affinity $R^2(t)$ for all three methods.

\begin{figure}[h!]
  \centering
  \includegraphics[width=\linewidth]{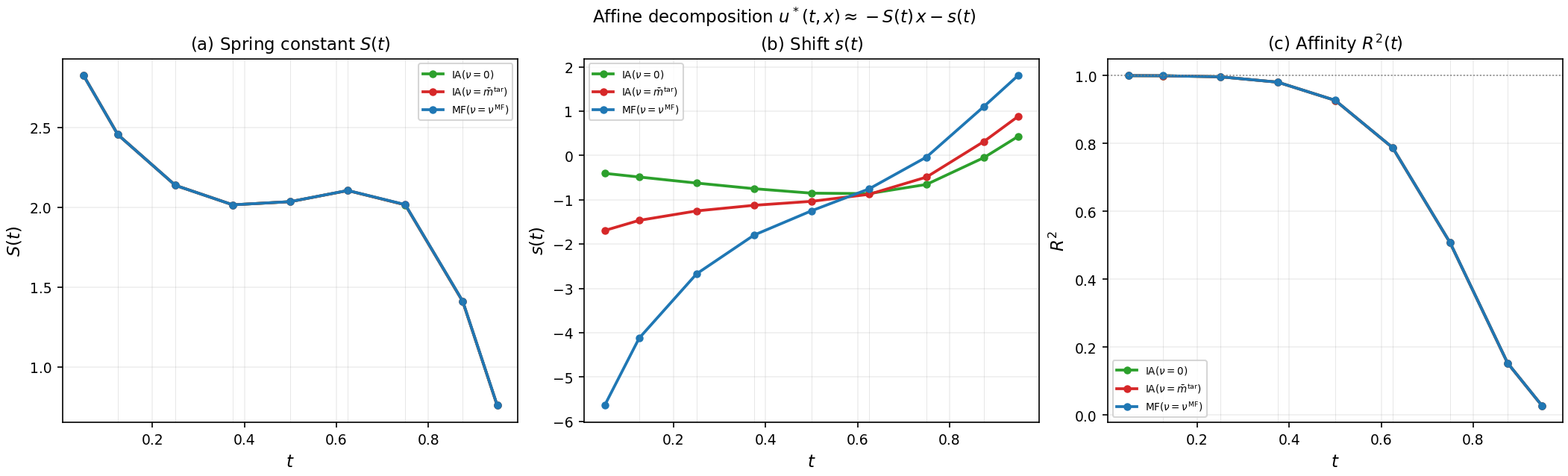}
  \caption{\textbf{Scenario~A}: score-field affine decomposition
    $u^{*}(t,x)\approx -S(t)\,x-s(t)$.
    (a)~Spring constant $S(t)$: identical across methods, set by
    $\beta(t)$ and $\rho^{(\mathrm{tar})}$.
    (b)~Shift $s(t)$: differs between methods;
    $|s^{\mathrm{MF}}|$ is largest at early times.
    (c)~Affinity $R^{2}(t)$: drops from $\sim 1.0$ to $\sim 0.03$,
    marking the onset of genuinely non-affine Gaussian-mixture score
    structure.
    Vertical gray lines: $\beta$-interval boundaries.}
  \label{fig:DR_Ss_R2}
\end{figure}

\subsection{Cross-scenario comparison}
\label{SI:dr:cross}

Figures \ref{fig:DR_AB_energy}--\ref{fig:DR_convergence}
summarise the comparison between scenarios.

\begin{figure}[h!]
  \centering
  \includegraphics[width=0.35\linewidth]{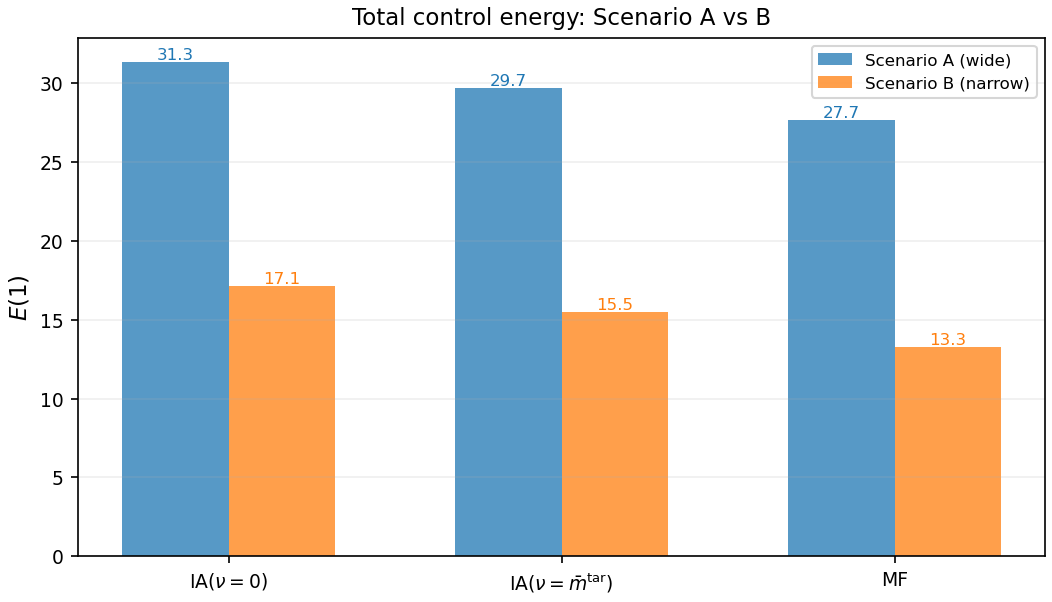}\hfill
  \includegraphics[width=0.6\linewidth]{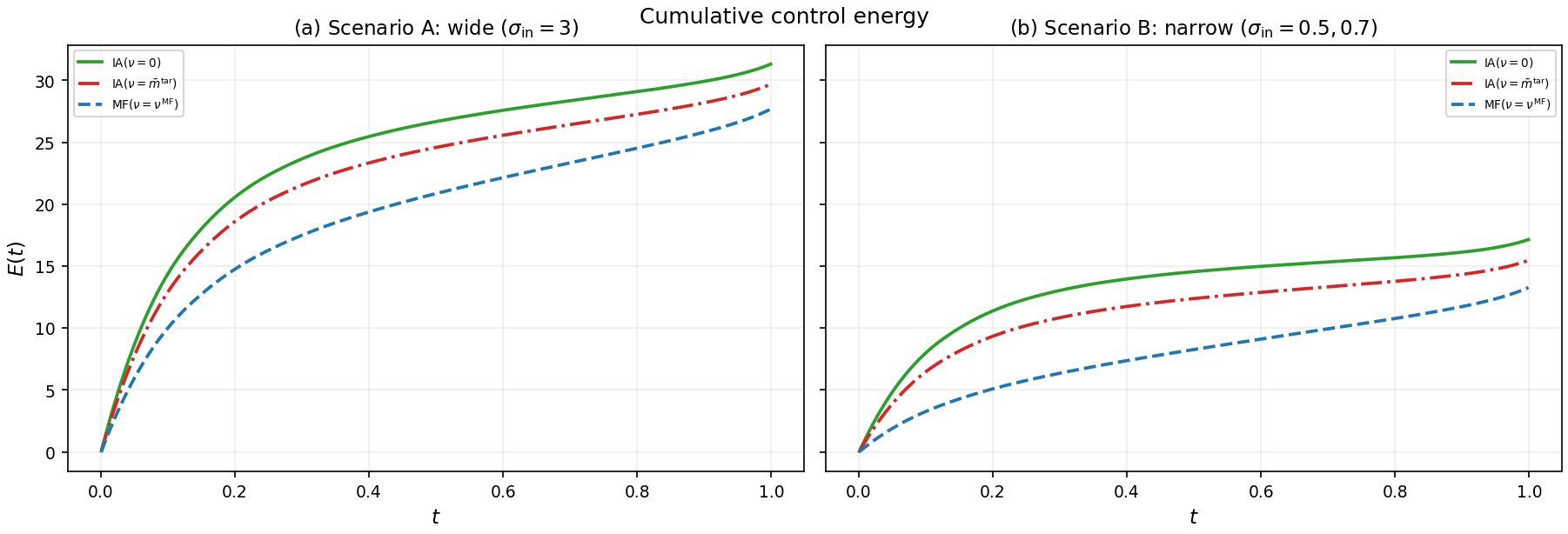}
  \caption{Cross-scenario energy comparison.
    (Left)~Total control energy $\mathcal{E}(1)$ for all three methods
    in Scenarios~A and~B.
    (Right)~Cumulative energy curves.
    The MF saving increases from 11.6\% (A) to 22.6\% (B).}
  \label{fig:DR_AB_energy}
\end{figure}

\paragraph{Energy.}
The absolute energy is roughly halved from Scenario~A to~B
(the narrow initial law starts closer to the target in Wasserstein
distance), but the \emph{relative} MF advantage approximately doubles:
11.6\%~(A) vs.\ 22.6\%~(B).
This amplification arises because, in Scenario~B, the two modes are
quasi-independent: the unoccupied cluster must travel ${\approx}\,4$
units while the occupied cluster moves only ${\approx}\,1.5$, and
the self-consistent guidance of Theorem~\ref{thm:linear-guidance}
adapts to this asymmetry more effectively than any single constant $\nu$.

\begin{figure}[h!]
  \centering
  \includegraphics[width=0.55\linewidth]{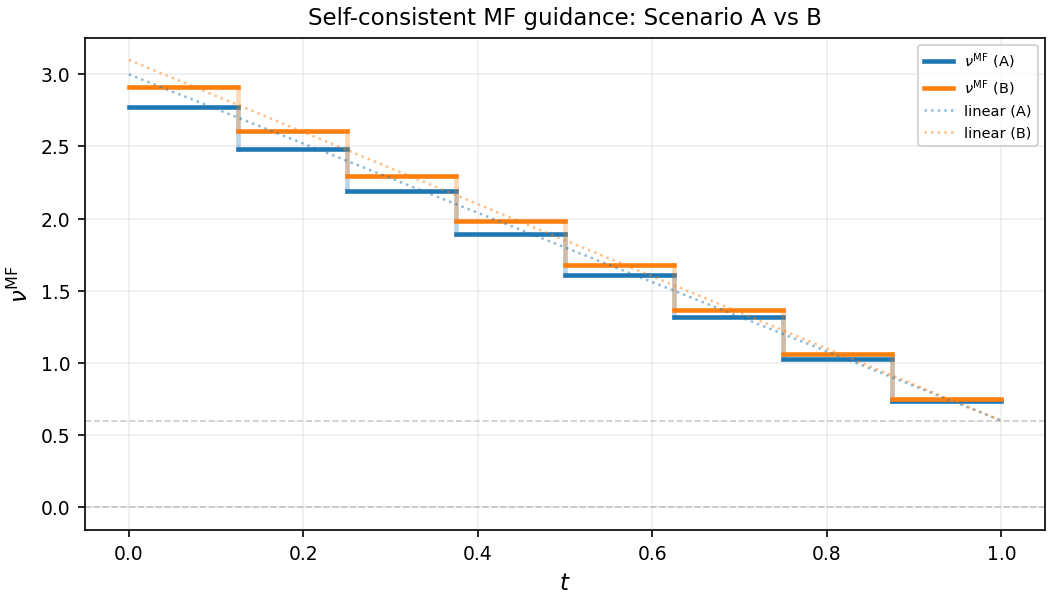}
  \caption{Self-consistent MF guidance $\nu^{(\MF)}(t)$ for
    Scenarios~A (blue) and~B (orange), with their respective linear
    interpolants (dotted).
    By Theorem~\ref{thm:linear-guidance}, the two curves coincide
    exactly in the continuous-time limit; the small residuals
    ($\max|\nu^{(\MF)}-\nu_{\mathrm{lin}}|=0.078$ for~A, $0.030$ for~B)
    are due to the PWC temporal discretisation ($M=8$ intervals).}
  \label{fig:DR_AB_guidance}
\end{figure}

\paragraph{MF guidance.}
Figure~\ref{fig:DR_AB_guidance} confirms Theorem~\ref{thm:linear-guidance}
numerically: the self-consistent guidance $\nu^{(\MF)}(t)$ lies
essentially on the linear interpolant between the initial and target
global means for both scenarios.
The residuals are smaller in Scenario~B, where the modes overlap less
and the PWC approximation to a linear profile is more accurate.

\begin{figure}[h!]
  \centering
  \includegraphics[width=0.48\linewidth]{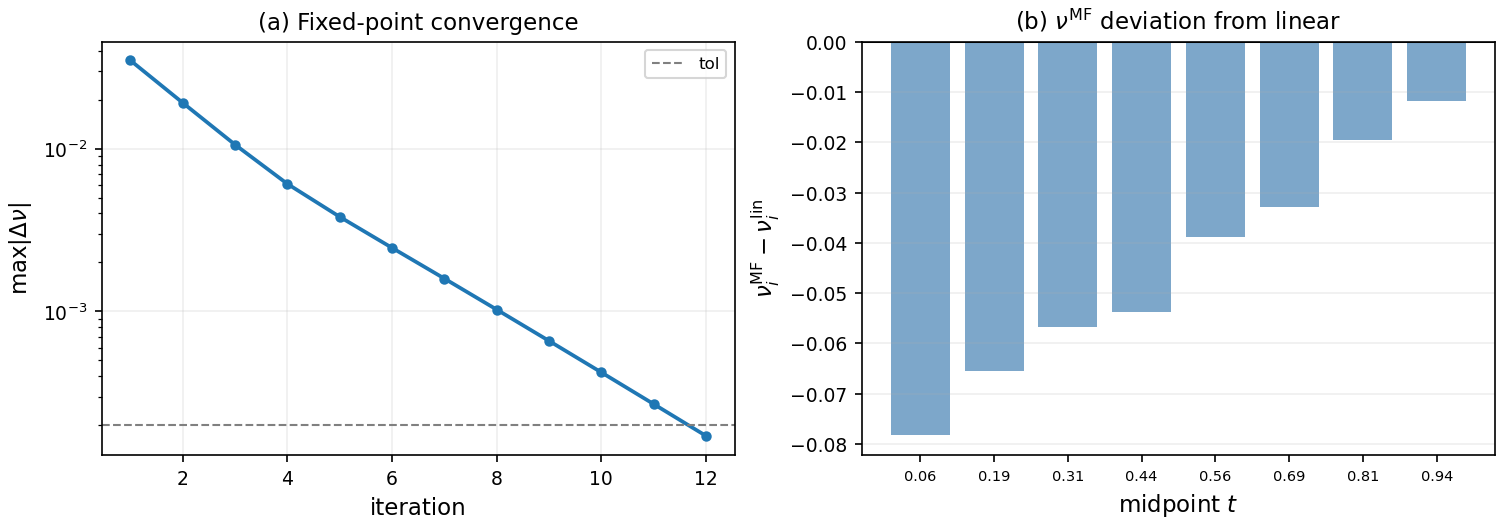}\hfill
  \includegraphics[width=0.48\linewidth]{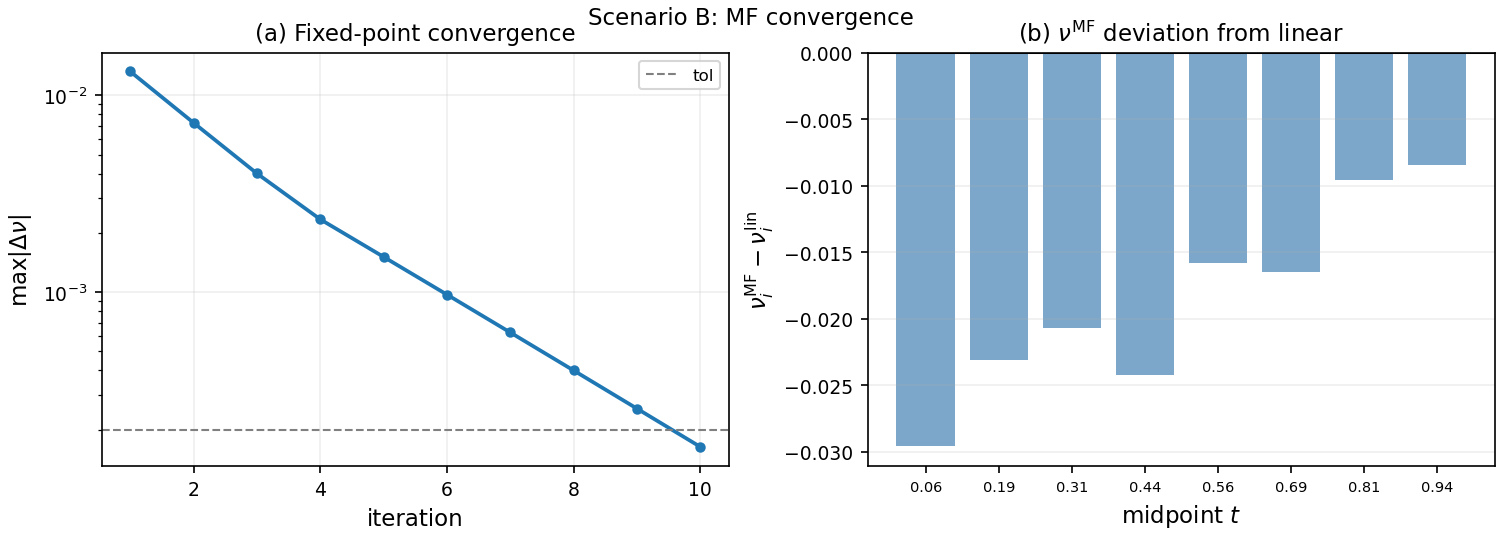}
  \caption{Convergence of the fixed-point iteration to the linear
    interpolant.
    Left: Scenario~A (12 iterations to tolerance $2\times10^{-4}$).
    Right: Scenario~B (10 iterations).
    (a)~$\max|\Delta\nu|$ vs.\ iteration (dashed: tolerance).
    (b)~Residual $\nu_i^{(\MF)}-\nu_i^{\mathrm{lin}}$ at each
    PWC midpoint, confirming Theorem~\ref{thm:linear-guidance}
    in the PWC limit.}
  \label{fig:DR_convergence}
\end{figure}

\section{Independent-Agent PID: Background}
\label{SI:ia}

This section summarises the independent-agent PID construction
\cite{behjoo_harmonic_2025,chertkov_generative_2025} that forms the
foundation for MF-PID.

\subsection{IA PID in the SOT formulation}
\label{SI:ia:sot}

In the absence of inter-agent coupling, the cost-to-go satisfies the
standard HJB equation, and after the Hopf--Cole substitution the
optimal control is
\begin{equation}
  u_t^{(\IA)}(x)
  = \nabla_x\log\int\!\!\ptar(y)\,
      \frac{G_t^{(-)}(x;y)}{G_1^{(+)}(y;0)}\,\dd y,
  \label{SI:eq:u-star-ia}
\end{equation}
where $G_t^{(\pm)}$ satisfy
\eqref{SI:eq:green-minus}--\eqref{SI:eq:green-plus}
with $\Veff_t$ replaced by a prescribed potential $V_t$
(no $p_t$ dependence).
The Green functions can therefore be computed independently of the
population density---this is the key structural simplification that
MF breaks.

\subsection{H-PID with quadratic guidance potential}
\label{SI:ia:hpid}

For $f_t\doteq 0$ and guided quadratic potential
$V_t(x)=\tfrac{\beta_t}{2}\|x-\nu_t\|^2$,
the Green functions take the Gaussian forms
\eqref{SI:eq:G-minus}--\eqref{SI:eq:G-plus} with scalar Riccati
coefficients satisfying \eqref{SI:eq:riccati-abc}.
The score function for a Gaussian-mixture target is given by
Proposition~\ref{prop:gm-score} with a \emph{prescribed} guidance
$\nu_t$.
The PWC closed forms of \S\ref{SI:quad:closedforms} then yield a
fully explicit, training-free generative model.
MF-PID specialises this to the endogenously determined guidance
of Theorem~\ref{thm:linear-guidance}.

\subsection{Cross-validation identity}
\label{SI:ia:cv}

\begin{proposition}[Cross-validation identity]
\label{prop:cv}
The optimal score satisfies
\begin{equation}
  u_t^*(x)
  = \nabla_x\log p_t^*(x) - \nabla_x\log G_t^{(+)}(x;0),
  \label{SI:eq:cross-val}
\end{equation}
relating the optimal control, the optimal marginal density, and the
forward Green function.
\end{proposition}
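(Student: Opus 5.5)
The plan is to derive the identity from the product (Hopf--Cole / Schr\"odinger) factorization of the optimal marginal already recorded in \S\ref{SI:theory}, namely $p_t^*(x)=G_t^{(+)}(x;0)\,\psi_t(x)/\psi_0(0)$ in \eqref{SI:eq:psi-p-green}, or equivalently $p_t^*\propto\psi_t\,G_t^{(+)}(\cdot;0)$ in the SOT formulation. Once this factorization is in hand, I take the logarithm and then the spatial gradient. The normalization $\psi_0(0)$ is $x$-independent and drops out, which gives $\nabla_x\log p_t^*=\nabla_x\log\psi_t+\nabla_x\log G_t^{(+)}(x;0)$. By \eqref{SI:eq:u-star-terminal}, or its SOT analogue \eqref{SI:eq:u-star-ia} where $\psi_t(x)=\int\ptar(y)\,G_t^{(-)}(x;y)/G_1^{(+)}(y;0)\,\dd y$, the first term is exactly $u_t^*$. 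Rearranging yields \eqref{SI:eq:cross-val}.

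Since the paper states the factorization rather than proving it, I would verify it directly. Set $q_t:=\psi_t\,G_t^{(+)}(\cdot;0)$ and compute $\partial_t q_t$ using the backward equation \eqref{SI:eq:lin-hjb} for $\psi_t$ and the forward equation \eqref{SI:eq:green-plus} for $G_t^{(+)}$. The potential terms $\pm V_t\,\psi G$ cancel. The remaining second-order terms combine via the identity
\[
\tfrac12\bigl(\psi\Delta G - G\Delta\psi\bigr)=\tfrac12\Delta(\psi G)-\nabla\cdot(G\nabla\psi).
\]
The drift terms combine as
\[
-\psi\,\nabla\cdot(fG)-G f\cdot\nabla\psi=-\nabla\cdot(f\psi G).
\]
Together these give
\[
\partial_t q_t=-\nabla\cdot\bigl(q_t(f_t+\nabla\log\psi_t)\bigr)+\tfrac12\Delta q_t,
\]
which is the KFP equation \eqref{SI:eq:kfp-optimal} with $u^*=\nabla\log\psi$. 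For the initial datum, $G_0^{(+)}(x;0)=\delta(x)$ gives $q_0=\psi_0(0)\,\delta$. In the SOT case, the terminal identity $q_1(y)=\ptar(y)$ follows from $G_1^{(-)}(x;y)=\delta(x-y)$ together with the division by $G_1^{(+)}(y;0)$. Uniqueness of the KFP solution then identifies $q_t/\psi_0(0)$ with $p_t^*$.

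In the MF case, I apply the same computation at the fixed point. Here $\Veff_t$ is frozen at the self-consistent $p_t^*$, so that both \eqref{SI:eq:lin-hjb} and \eqref{SI:eq:green-plus} are linear equations sharing the same potential. This is the point to state explicitly: the identity uses the Green function evaluated under the equilibrium effective potential, for example $G^{(+)}(\cdot\,|\,\nu^{(\MF)})$ in the quadratic case.

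I expect the main obstacle to be rigor rather than algebra. The logarithm and its gradient require $\psi_t>0$ and $G_t^{(+)}(\cdot;0)>0$ for $t\in(0,1)$. This holds for the Gaussian kernels \eqref{SI:eq:G-minus}--\eqref{SI:eq:G-plus}, and in general follows from the strong maximum principle. The identity should be asserted only on the open interval $t\in(0,1)$, because both endpoints are singular ($\delta$ initial data and $G_1^{(-)}$ a delta). Uniqueness of the KFP solution with the given drift would be invoked under the standing regularity assumptions. In the Gaussian-mixture setting it can alternatively be checked explicitly against Proposition~\ref{prop:marginal} and Proposition~\ref{prop:gm-score}.
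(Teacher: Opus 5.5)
Your proposal is correct and follows the route the paper implicitly takes. The paper gives no separate proof: the identity is simply the log-gradient of the factorization $p_t^*(x)=G_t^{(+)}(x;0)\,\psi_t(x)/\psi_0(0)$ in \eqref{SI:eq:psi-p-green}, combined with $u_t^*=\nabla_x\log\psi_t$. Your direct check fills in a step the paper only asserts, and the algebra there is right. That check shows $\psi_t\,G_t^{(+)}(\cdot;0)$ solves \eqref{SI:eq:kfp-optimal}, with the Green function frozen at the equilibrium $\Veff_t$ in the MF case, and you add the positivity caveat and the restriction to $t\in(0,1)$.
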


When $V_t=0$ and $f_t$ is pre-trained to yield $\ptar$ as a stationary
law, $G_1^{(+)}(x;0)=\ptar(x)$, and \eqref{SI:eq:cross-val} implies
$u_t^*\doteq 0$---confirming that no corrective control is needed when
the drift already realises the target.


\end{document}